\documentclass[12pt]{article}

\usepackage{amsmath}
\usepackage{amssymb}
\usepackage{amsthm}
\usepackage{graphicx}
\usepackage{caption}
\usepackage{fourier} 
\usepackage{soul} 

\usepackage{color}


\usepackage[margin=3cm]{geometry}

\usepackage[T1]{fontenc} 

\usepackage{nicefrac}

\usepackage{cancel} 
\usepackage{hyperref}

\newtheorem{theorem}{Theorem}
\newtheorem{proposition}{Proposition}
\newtheorem{lemma}{Lemma}

\theoremstyle{definition}
\newtheorem{definition}{Definition}

\theoremstyle{remark}
\newtheorem{remark}{Remark}

\def\RR{\mathbb{R}}

\def\11{\mathds{1}}

\def\E{\mathbb{E}}
\def\P{\mathbb{P}}
\def\R{\mathbb{R}}

\def\1{\mathbf{1}}
\def\N{\mathbb{N}}

\def\d{\partial}
\def\Z{\mathbb{Z}}

\def\cF{{\cal F}}

\def\cP{{\cal P}}

\newcommand{\sss}{\scriptscriptstyle}

\newcommand{\e}{\mathrm{e}}

\begin{document}

\title{Stochastic approximation on non-compact measure spaces and
application to measure-valued P\'olya processes.}
\author{C\'ecile Mailler$^1$ 
and Denis Villemonais$^{2}$}

\footnotetext[1]{University of Bath, Department of Mathematical Sciences, Claverton Down, BA2 7AY Bath, UK.\\ Email: \texttt{c.mailler@bath.ac.uk}}
\footnotetext[2]{Universit\'e de Lorraine, CNRS, Inria, IECL, UMR 7502, F-54000 Nancy, France.\\
Email: \texttt{denis.villemonais@univ-lorraine.fr}}

\maketitle

\begin{abstract}
  Our main result is to prove almost-sure convergence of a stochastic-approximation
  algorithm defined on the space of measures on a non-compact
  space. Our motivation is to apply this result to measure-valued P\'olya processes (MVPPs, also known as infinitely-many P\'olya urns). Our main idea is to use Foster-Lyapunov type criteria in a
  novel way to generalize stochastic-approximation methods to
  measure-valued Markov processes with a non-compact
  underlying space, overcoming in a fairly
  general context one of the major difficulties of existing studies
  on this subject.

  From the MVPPs point of view, our result implies almost-sure convergence of a large
  class of MVPPs; this convergence was only obtained until now for specific examples, with only convergence in probability established for general classes.
  Furthermore, our approach allows us to extend the definition of MVPPs by
  adding ``weights'' to the different colors of the
  infinitely-many-color urn.
  We also exhibit a link between non-``balanced'' MVPPs and quasi-stationary
  distributions of Markovian processes, which allows us to treat, for
  the first time in the literature, the non-balanced case. 

Finally, we show how our result can be applied to designing stochastic-approximation algorithms for the approximation of quasi-stationary distributions of discrete- and continuous-time Markov processes on non-compact spaces.
\end{abstract}
%
%
%
%

\section{Introduction}

Measure-valued P\'olya processes (MVPPs) are a generalization of P\'olya urns to
the infinitely-many-color case. 
{P\'olya urns date back to P\'olya \& Eggenberger~\cite{EP23}, and have been thoroughly studied since then; highlights include, e.g., the seminal works of Athreya \& Karlin~\cite{AK} and Janson~\cite{Janson04}.
Although the question of generalizing P\'olya urns to infinitely-many colors was posed in 2004 in~\cite{Janson04}, MVPPs were only introduced recently by 
Bandyopadhyay \& Thacker~\cite{BT++} and Mailler \& Marckert~\cite{MaillerMarckert2016}.}
In both papers, MVPPs are coupled with branching Markov chains on the random recursive tree.

The main idea of this article is to use stochastic-approximation methods (in the spirit of Duflo~\cite{Duflo} and Bena\"im~\cite{Benaim1999}) to prove almost-sure convergence of a class of MVPPs; the main difficulty comes from the fact that the stochastic-approximation algorithm that we consider is defined on the space of measures on a \emph{non-compact} space.

The stochastic-approximation approach is a classical method for the
study of P\'olya urn processes when the color-set is
finite.  For instance, in Section~2.2 of Bena\"im~\cite{Benaim1999}, the author
introduces the reformulation of the classical P\'olya urn model in
terms of stochastic approximations and provide some ideas for
generalizations; in Laruelle \& Pag\`es~\cite{LaruellePages2013}, 
the authors reformulate the study of several urn models in the setting of stochastic
approximations, with applications to clinical trials based on
randomized urn models (see also Laruelle \& Pag\`es~\cite{LaruellePages2013a} with
applications to optimal asset allocation in finance
and Zhang~\cite{Zhang2016} with applications to adaptive designs); we also
refer the reader to Pemantle~\cite{Pemantle2007}, which provides a survey of
random processes with reinforcement using stochastic-approximation
methods. 
Since stochastic approximation naturally applies to processes in general state spaces, 
it is natural to extend the above methods to the case of MVPPs. 

\bigskip
Our main contribution from the stochastic-approximation point of view
is to prove convergence of a stochastic-approximation
algorithm defined on a non-compact space, namely the set of
probability measures on the color-space (being an arbitrary Polish
space).  To our knowledge, very little is known for
  measure valued stochastic-approximation algorithm on
  non-compact spaces, with some exceptions such as~\cite{Janson03} 
  and~\cite{MP16}. In the first
  reference, Janson deals with the compactness issue by proving that
  the considered model can be restricted to finite subspaces; in the
  second one, Maillard \& Paquette prove that a specific stochastic
  approximation on the set of measures on $[0,\infty)$ converges
  almost surely, using an ad hoc coupling with the Kakutani and
    the uniform process.  Our generalization of
  measure-valued stochastic-approximation methods to
  non-compact state spaces is made by using abstract
  Foster-Lyapunov type criteria in an original way, 
  yielding the tightness of the stochastic-approximation algorithm. 

\bigskip
Our main contribution to the theory of MVPPs is 
to prove almost-sure convergence for a large class of MVPPs 
(instead of the convergence in probability shown by Mailler \& Marckert~\cite{MaillerMarckert2016}).
Furthermore, we generalize the 
definition of measure-valued P\'olya processes 
to allow different colors to have different ``weights'',
and to allow the so-called ``replacement rule'' to be random
(two features that are classical in the context of P\'olya urns).
We are also able to treat the ``non-balanced'' case, 
which was not treated at all by
Bandyopadhyay \& Thacker~\cite{BT++} or Mailler \& Marckert~\cite{MaillerMarckert2016}.

\bigskip
We believe that the applications of our results go beyond the field of
MVPPs: in particular, we detail an application to the approximation of
quasi-stationary distributions. Consider a Markov process that gets
absorbed when it reaches a state~$\partial$. A quasi-stationary
distribution (QSD), if it exists, is the limiting distribution of this
Markov process conditioned on not reaching $\partial$ (we refer
  the reader
  to~\cite{MeleardVillemonais2012,DoornPollett2013,ColletMartinezEtAl2013}
  for general introductions to quasi-stationary distributions).
Given an absorbed Markov process, it is in general a hard question to
prove existence and uniqueness of a QSD; an even harder question is to
find an explicit formula for it.  With many applications,
  including the study of interacting particle
  systems~\cite{OliveiraDickman2006,DeshayesRoll2017}, of population
  dynamics~\cite{VerboomLankesterEtAl,CattiauxColletEtAl2009}, of the
  simulation of metastable systems~\cite{DiLelievreEtAl2016} and of
  Monte-Carlo methods~\cite{WangKolbEtAl2017}, numerical approximation
  methods for quasi-stationary distributions have attracted a lot of
  interest during the last decades (see for instance~\cite{GrigorescuKang2004,FerrariMaric2007,GroismanJonckheere2013,Maric2015,OcafrainVillemonais2017}).
A recent method
introduced independently by Bena\"im \& Cloez~\cite{BC15} and by
Blanchet, Glynn \& Zheng~\cite{BlanchetGlynnEtAl2016} makes use of a
stochastic-approximation algorithm for computing quasi-stationary
distributions on finite state spaces. This method has been recently
extended to compact state space cases by Bena\"im, Cloez \&
Panloup~\cite{BCP++} and Wang, Roberts \&
Steinsaltz~\cite{WangRobertsEtAl2018}.
We show (see Section~\ref{sub:approx_QSD}) that our result can be
applied to prove almost-sure convergence of such QSD-approximation
algorithms for absorbed Markov processes taking values on a
non-compact space.

\bigskip

\subsection{Definition of the model and main result}
\label{sec:defmodel}
Throughout the article, $E$ is a Polish space endowed with its Borel
sigma-field. A measure-valued P\'olya process (MVPP) is a Markov chain
$(m_n)_{n\geq 0}$ taking values in the set of measures on a Polish
space $E$.  It depends on three parameters: its \textit{initial
  composition} $m_0$ a non-zero non-negative measure on
  $E$, a sequence of i.i.d.\ \textit{replacement}
kernels\footnote{A kernel (resp. a non-negative kernel) on $E$ is, by definition, a function from
  $E$ into the set of measures (resp. non-negative measures)  on $E$. In particular, for all
  $x\in E$, $R^{\sss (n)}_x$ is a measure on $E$ almost
  surely.}~$(R^{\sss (n)})_{n\geq 1}$ on~$E$, and a non-negative
\textit{weight} kernel $P$ on~$E$. We assume that
\begin{itemize}
  \item[(T${}_{>0}$)] almost surely, for all $x\in E$, $R^{\sss (n)}_x$ is a non-negative measure.
\end{itemize}

Given $m_n$, we define $m_{n+1}$ as
follows: pick a random element $Y_{n+1}$ of $E$ according to the
probability distribution proportional to $m_nP$, i.e., for all Borel
set $A$ of $E$,
    \begin{equation}
      \label{eq:law_Ynp1} \mathbb
      P(Y_{n+1}\in A \,|\, m_n) = \frac{\int_E P_x(A) \,\mathrm
        dm_n(x)}{\int_E P_x(E) \,\mathrm dm_n(x)};
    \end{equation}
    and then set
    \[
      m_{n+1} = m_n + R_{Y_{n+1}}^{\sss (n+1)}.
    \]

  Measure-valued P\'olya processes were originally introduced
    by~\cite{BT++} and~\cite{MaillerMarckert2016}, as a
    generalization of $d$-color P\'olya urns, although they did not consider ``weighted'' MVPPs
        (they always had $P_x = \delta_x$ for all
        $x\in E$). Let us recall
    the definition of a P\'olya urn and show why MVPPs generalize this model:
    A $d$-color P\'olya urn is a Markov process $(U(n))_{n\geq 0}$ on
    $\mathbb N^d$ that depends on three parameters: the initial
    composition vector $U(0)$, the replacement matrix $M$, and weights
    $w_1, \ldots, w_d\in(0,\infty)$.  The vector $U(n)$ represents the
    content of an urn that contains balls of $d$ different colors;
    balls of color $i$ all have weight $w_i$.  Given $U(n)$, one
    defines $U(n+1)$ by picking a ball at random in the urn with
    probability proportional to its weight, denoting the color of
    this random ball $\xi_{n+1}$, and setting
    $U(n+1) = U(n)+M_{\xi_n}$, where $M_1, \ldots, M_d$ are the lines
    of~$M$.

    If we let $E = \{1, \ldots, d\}$ and
    $m_n = \sum_{i=1}^d U_i(n)\delta_i$ for all $n\geq 0$, then $m_n$
    is a measure-valued P\'olya process with replacement kernel
    \[R^{\sss (n)}_x = \sum_{i=1}^d M_{x,i} \delta_i \quad(\text{
        almost surely for all } n\geq 0, 1\leq x\leq d),\] and weight
    kernel $P_x = w_x\delta_x$ for all $1\leq x\leq d$.

    Therefore, the MVPP process $(m_n)_{n\geq 0}$ can be thought of as
    a composition measure on a set $E$ of colors, and the random
    variable $Y_{n+1}$ can be seen as the color of the ``ball'' drawn
    at time $n+1$. The main advantage of this wider model is that one
    can consider P\'olya urns defined on an infinite, and even
    uncountable, set.

    \bigskip Our main result is to prove almost-sure convergence of
    the sequence $(\nicefrac{m_n}{m_n(E)})_{n\geq 0}$ to a
    deterministic measure under the following assumptions: We denote
    by~$R$ the common expectation of the $R^{\sss (n)}$'s and set
    $Q^{\sss (n)} = R^{\sss (n)}P$ for all $n\geq 1$, and $Q= RP$,
    meaning that, for all $x\in E$ and all Borel set $A\subseteq E$,
    \[Q_x^{\sss (n)}(A) = \int_E P_y(A) \,\mathrm dR^{\sss (n)}_x(y)
      \quad \text{ and }\quad
      Q_x(A) = \int_E P_y(A) \,\mathrm dR_x(y).\]
    We assume that
    \begin{itemize}
    \item[(A1)] for all $x\in E$, $Q_x(E)\leq 1$, and there exists a
      probability measure $\mu$ on $\R$ {\color{black}with positive mean}
      such that, for all $x\in E$, the law of $Q_x^{\sss (i)}(E)$
      stochastically dominates $\mu$. In particular, setting
      $c_1=\int_0^\infty x\,\mathrm d\mu(x)$,
      \[
        {\color{black}0<}c_1 \leq \inf_{x\in E} Q_x(E)\leq \sup_{x\in E} Q_x(E)\leq 1;
      \]
    \item[(A2)] there exists a locally bounded function $V\,:\,E \to [1,+\infty)$ such that, 
      \begin{itemize}
\item[(i)] for all $N\geq 0$, 
  the set $\{x\in E \colon V(x)\leq N\}$ is relatively compact;
\item[(ii)] there exist two constants $\theta\in(0, c_1)$ and $K\geq 0$ such that
\[Q_x \cdot V\leq \theta V(x) + K \quad (\forall x\in E),\]
\item[(iii)] and that there exist three constants  $r>1$, $p>\frac{\ln\theta}{\ln(\nicefrac{\theta}{c_1})}\vee 2$, 
$A>0$ such that
\[\E\left[R^{\sss (1)}_x(E)^r\right]\vee \E\left[Q^{\sss (1)}_x(E)^p\right]  \leq  A V(x) \quad (\forall x\in E).\]
\end{itemize}
\end{itemize}

Under Assumption (A1), $Q$ is a non-negative kernel such that
$\sup_x Q_x(E)\leq 1$, so that $Q-I$ is the jump kernel (or
infinitesimal generator) of a unique sub-Markovian transition kernel
$(P_t)_{t\geq 0}$ on $E$. We consider the continuous-time
pure-jump Markov process $(X_t)_{t\geq 0}$ on $E \cup \{\partial\}$,
where $\partial\notin E$ is an absorbing state, with Markovian
transition kernel $P_t +(1-P_t(E))\delta_\d$.
A probability distribution $\nu$ is a {\it quasi-stationary
distribution} of $(X_t)_{t\geq 0}$ if, and only if, 
there exists a probability measure $\alpha$ on $E$ such that, for all Borel
sets $A\subseteq E$,
\[
\mathbb P_\alpha(X_t\in A \,|\, X_t \neq \partial) \xrightarrow[t\rightarrow+\infty]{} \nu(A),
\]
where $\mathbb P_\alpha$ is the law of $X$ with initial distribution~$\alpha$.

\begin{itemize}
\item[(A3)] the continuous-time pure jump Markov process~$X$ with
  sub-Markovian jump kernel $Q-I$ admits a \textit{quasi-stationary
    distribution} $\nu\in\cP(E)$.  We further assume that the
  convergence of $\mathbb P_{\alpha}(X_t\in \cdot \,|\, X_t \neq \partial)$
  holds uniformly with respect to the total variation norm on
  $\{\alpha\in\cP(E)\mid \alpha \cdot V^{1/q}\leq C\}$, for each
  $C>0$, where $q = p/(p-1)$.
\end{itemize}
Finally, we need the following technical assumption:
\begin{itemize}
\item[(A4)] for all bounded continuous functions $f:E\rightarrow\R$,
  $x\in E \mapsto R_x f$ and $x\in E\mapsto Q_x f$ are continuous.
\end{itemize}

Under these assumptions, we are able to prove almost-sure convergence of the renormalized MVPP 
$\tilde m_n:= m_n/m_n(E)$:
\begin{theorem}
\label{thm:unbal-with-weights}
Under Assumptions (T${}_{>0}$) and (A1-- 4), if $m_0\cdot V<\infty$ and
$m_0P\cdot V<\infty$, then the sequence of random measures
  $(m_n/n)_{n\geq 0}$ converges almost surely to $\nu R$ with respect
  to the topology of weak convergence.  Moreover,
  $\sup_{n} \{{m_n P \cdot V^{\nicefrac1q}}/{n}\} <+\infty$ almost surely, where  $q=p/(p-1)$.

  Furthermore, if $\nu R(E)>0$, then $(\tilde{m}_n)_{n\in\N}$
  converges almost surely to $\nu R/\nu R(E)$ with respect to the
  topology of weak convergence.
\end{theorem}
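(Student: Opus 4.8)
The plan is to recognise the renormalised \emph{weighted} composition measure $\widehat\ell_n:=\ell_n/\ell_n(E)\in\cP(E)$, where $\ell_n:=m_nP$, as a Robbins--Monro stochastic-approximation scheme on the non-compact space $\cP(E)$, to run the ODE method for it, and finally to transfer the conclusion back to $m_n$ by a law of large numbers. Applying $P$ to $m_{n+1}=m_n+R^{\sss (n+1)}_{Y_{n+1}}$ gives $\ell_{n+1}=\ell_n+Q^{\sss (n+1)}_{Y_{n+1}}$ with $Y_{n+1}$ of law $\widehat\ell_n$; writing $b_n:=\ell_n(E)$ and $W_{n+1}:=Q^{\sss (n+1)}_{Y_{n+1}}(E)$ one obtains
\[
\widehat\ell_{n+1}=\widehat\ell_n+\frac1{b_{n+1}}\Bigl(Q^{\sss (n+1)}_{Y_{n+1}}-W_{n+1}\widehat\ell_n\Bigr).
\]
By (A1), $\E[W_{n+1}\mid\cF_n]=(\widehat\ell_nQ)(E)\in[c_1,1]$, so $b_n$ grows linearly ($c_1n\lesssim b_n\lesssim n$) and the step sizes $\gamma_{n+1}:=1/b_{n+1}$ are of order $1/n$; the conditional drift of the increment is $h(\widehat\ell_n)$ for the vector field $h(\pi):=\pi Q-(\pi Q)(E)\,\pi$ on $\cP(E)$, and the remainder is a martingale increment whose conditional moments are controlled, via (A2)(iii), by $\widehat\ell_n\cdot V$.

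Writing $L:=Q-I$ one has $h(\pi)=\pi L-(\pi L)(E)\,\pi$, and a direct computation shows that $t\mapsto \alpha P_t/\alpha P_t(E)$ solves $\dot\pi=h(\pi)$ for every $\alpha\in\cP(E)$, the denominator staying positive since $\alpha P_t(E)=\P_\alpha(X_t\neq\partial)\geq \e^{-(1-c_1)t}$ by (A1). Thus the ODE attached to the scheme is precisely the flow of normalised conditioned semigroups of the absorbed process $X$ of (A3); its equilibria are the quasi-stationary distributions of $X$, and Assumption (A3) --- existence of $\nu$ together with uniform convergence of $\P_\alpha(X_t\in\cdot\mid X_t\neq\partial)$ to $\nu$ on each set $\{\alpha\cdot V^{\nicefrac1q}\leq C\}$ --- forces $\nu$ to be the unique equilibrium and to be globally attracting, uniformly on $V^{\nicefrac1q}$-bounded subsets of $\cP(E)$. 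Consequently the only internally chain-transitive set of the flow that meets such a bounded set is $\{\nu\}$.

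The crux, and the point where non-compactness of $E$ must be overcome, is to confine $(\widehat\ell_n)$ to a fixed compact subset of $\cP(E)$, which is where the Foster-Lyapunov hypothesis (A2) enters. From (A2)(ii) and $\sup_xQ_x(E)\leq1$, Hölder's inequality gives the concave drift bound $Q_x\cdot V^{\nicefrac1q}\leq\theta^{\nicefrac1q}V(x)^{\nicefrac1q}+K^{\nicefrac1q}$, and the hypothesis $p>\ln\theta/\ln(\theta/c_1)$ amounts to $\theta^{\nicefrac1q}<c_1$, i.e.\ to this being a genuine contraction relative to the linear growth rate $c_1$ of $b_n$. Tracking $\ell_n\cdot V^{\nicefrac1q}$ and using that the increments are a.s.\ finite (by (A2)(ii)), one gets
\[
\E\bigl[\ell_{n+1}\cdot V^{\nicefrac1q}\mid\cF_n\bigr]\leq\Bigl(1+\tfrac{\theta^{\nicefrac1q}}{b_{n}}\Bigr)\,\ell_n\cdot V^{\nicefrac1q}+K^{\nicefrac1q},
\]
so that $\ell_n\cdot V^{\nicefrac1q}$, divided by a suitable deterministic sequence, is a non-negative supermartingale perturbed by a martingale which, thanks to the moment bounds (A2)(iii) with $p>2$ and $r>1$, stays bounded after normalisation (a Robbins--Siegmund type argument together with a martingale strong law). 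This yields $\sup_n (\ell_n\cdot V^{\nicefrac1q})/n<\infty$ a.s., hence $\sup_n\widehat\ell_n\cdot V^{\nicefrac1q}<\infty$ a.s. Since $\{V^{\nicefrac1q}\leq N\}=\{V\leq N^q\}$ is relatively compact by (A2)(i), Markov's inequality and Prokhorov's theorem then make $(\widehat\ell_n)$ a.s.\ relatively compact in $\cP(E)$ with closure in a fixed $V^{\nicefrac1q}$-bounded set. Combining this with the preceding step-size and martingale-noise estimates, the ODE method (the interpolated process is an asymptotic pseudo-trajectory, its limit set is internally chain transitive, hence equals $\{\nu\}$) gives $\widehat\ell_n\to\nu$ a.s.\ in the topology of weak convergence.

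Finally, write $m_n=m_0+\sum_{k=1}^nR^{\sss (k)}_{Y_k}$. For bounded continuous $f$ one has $\E[R^{\sss (k)}_{Y_k}f\mid\cF_{k-1}]=\int_E R_xf\,\mathrm d\widehat\ell_{k-1}(x)$, with $x\mapsto R_xf$ continuous by (A4); since $\widehat\ell_{k-1}\to\nu$ weakly and $\sup_n\widehat\ell_n\cdot V^{\nicefrac1q}<\infty$ supplies the required uniform integrability (via (A2)(iii), which gives $R_x(E)\leq(AV(x))^{\nicefrac1r}$), this converges to $\int_ER_xf\,\mathrm d\nu(x)=(\nu R)f$; a martingale strong law for the increments $R^{\sss (k)}_{Y_k}f-\E[\,\cdot\mid\cF_{k-1}]$ (their moments again controlled through (A2)(iii) with $r>1$) then gives $m_n(f)/n\to(\nu R)f$, and $f\equiv1$ yields $m_n(E)/n\to\nu R(E)$. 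Hence $m_n/n\to\nu R$ a.s.\ in the topology of weak convergence, the bound $\sup_n (m_nP\cdot V^{\nicefrac1q})/n<\infty$ is the estimate of the previous step, and if $\nu R(E)>0$ then $\tilde m_n=(m_n/n)/(m_n(E)/n)\to\nu R/\nu R(E)$ a.s.\ in the topology of weak convergence as well. I expect the Foster-Lyapunov step --- turning the drift condition into an \emph{almost-sure} compactness statement strong enough to trap the iterates while keeping them inside the set on which (A3) is uniform --- to be the main obstacle, and the calibration of the exponents $r$ and $p$ (and the appearance of $V^{\nicefrac1q}$ rather than $V$) is precisely what makes it go through.
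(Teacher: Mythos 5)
Your proposal is correct in outline and rests on the same pillars as the paper---stochastic approximation analysed by Bena\"im's ODE method, Foster--Lyapunov confinement through $V^{\nicefrac1q}$, identification of the limiting flow with the normalised conditioned semigroup, and (A3) to make $\{\nu\}$ the attractor---but you run the algorithm on a genuinely different state variable: the sampling measure $\widehat\ell_n=m_nP/m_nP(E)$, whereas the paper (Proposition~\ref{prop:eta}) iterates on the empirical measure of draws $\tilde\eta_n=\frac1n\sum_{i\leq n}\delta_{Y_i}$ and only afterwards transfers to $m_n$. What your choice buys is noise that is (essentially) a centred martingale difference, so you bypass the paper's Kushner--Clark Lemma~\ref{lem:U}, whose role is precisely to control the non-centred error coming from the discrepancy between $m_nP/m_nP(E)$ and $\eta_nQ/\eta_nQ(E)$. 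What it costs: (i) your step size $1/b_{n+1}$ is not $\cF_n$-measurable (it contains $Q^{\sss(n+1)}_{Y_{n+1}}(E)$), so the drift/noise splitting you state is not exact, and a second-order remainder of order $Q^{\sss(n+1)}_{Y_{n+1}}(E)^2/(b_nb_{n+1})$ must be shown to satisfy a Kushner--Clark-type condition (it does, using the $p$-th moments in (A2-iii) and $p>2$); (ii) the increments $Q^{\sss(n+1)}_{Y_{n+1}}$ have unbounded mass, so your Robbins--Siegmund confinement of $\ell_n\cdot V^{\nicefrac1q}$---whose drift inequality and the role of $\theta^{\nicefrac1q}<c_1$ you identify correctly---still needs the localisation the paper uses (the stopping times of Lemma~\ref{lem:cv_sigma_k} and the expectation bounds of Lemma~\ref{lem:C}), because the conditional moments are controlled only by $\widehat\ell_n\cdot V$, which is not a priori bounded; and (iii) before Bena\"im's asymptotic-pseudo-trajectory theorem can be invoked you need uniqueness, $\cP_C(E)$-invariance and joint continuity of the semiflow of the measure-valued ODE, which you assert by exhibiting the conditioned semigroup as a solution but which occupies the paper's Lemma~\ref{lem:flow} (together with Lemma~\ref{lem:tech}). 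Your transfer step, $m_nf/n\to\nu R f$ via $\E[R^{\sss(k)}_{Y_k}\cdot f\mid\cF_{k-1}]=\widehat\ell_{k-1}\cdot(R_\cdot f)$ plus a martingale law of large numbers, is a clean alternative to the paper's comparison $m_n\cdot f=\eta_nR\cdot f+o(n)$; only note that the uniform integrability you invoke from $R_x(E)\leq(AV(x))^{\nicefrac1r}$ together with $\sup_n\widehat\ell_n\cdot V^{\nicefrac1q}<\infty$ literally requires $r>q$, which the assumptions do not guarantee---the paper's corresponding step is equally terse, but in a complete write-up this point deserves an extra argument (e.g.\ a truncation or a different exponent).
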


\begin{remark}
  If $R=Q$, then the quasi-stationary distribution $\nu$ is a left
  eigenfunction for $R$, with associated eigenvalue
  $\theta_0\in(0,1]$. In particular,
  Theorem~\ref{thm:unbal-with-weights} implies that the average mass
  of $m_n$, i.e. $m_n(E)/n$, converges almost surely to $\theta_0$.
\end{remark}

\begin{remark}
  The main result holds under a weaker versions of
  Assumption~3: namely, the total variation distance can be replaced
  by any metric inducing the topology of weak convergence (or a stronger one).
\end{remark}

\begin{remark}
  \label{remNfinite}
  To illustrate how this theorem applies, let us first consider the
  simple case of a classical $d$-color P\'olya urn of 
    random replacement matrix $M^{{\color{black} (n)}}$ with no
  weights, {\color{black} where $(M^{\sss (n)})_n$ is a sequence of i.i.d.\ 
    random matrices with non-negative entries and mean $M$}.  We
  assume that $\sum_{i=1}^d M_{x,i} > 0$ for all $1\leq x\leq d$
  and that $M$ is irreducible.  Let
  $S=\max_{x=1}^d \sum_{i=1}^d M_{x,i}$, and let
  $m_n = \frac{1}{S}\sum_{i=1}^d U_i(n)\delta_i$, where $U_i(n)$ is the number of
  balls of color~$i$ in the urn at time~$n$. One can check that
  $(m_n)_{n\geq 0}$ is an MVPP on $E=\{1, \ldots, d\}$ with replacement kernel
  $R^{\sss (n)}_x=\frac1S\sum_{i=1}^d
  M^{\sss (n)}_{x,i}\delta_i$, for all $n\geq 0$ and
  $1\leq x\leq d${\color{black}, such that $R=\nicefrac{M}{S}$}.

  Note that, since we have no weights, $R=Q$. Let $\mu$ be the distribution of
  $\min_{x\in \{1,\ldots,d\}} X_x$, where $X_1,\ldots,X_d$ are
  independent random variables respectively distributed as
  $Q_1^{\sss (1)}(E), \ldots,$ $Q_d^{\sss (1)}(E)$.
  Assumption (A1) is satisfied since $\mu$ has positive mean $c_1\leq Q_x(E)\leq 1$
  for all $1\leq x\leq d$. Assumption (A2) is automatically satisfied since the color
  space $E$ is compact. Consider the process $X$ on
  $E\cup\{\partial\}$ absorbed at $\d$ and whose jump matrix
  restricted to $E$ is given by $\nicefrac MS-I$.  Then, since $\nicefrac MS$ is
  irreducible, the process $X$ conditioned on not hitting $\partial$
  has a unique quasi-stationary distribution
  $\nu = \sum_{i=1}^n v_i\delta_i$, which is given by the unique
  non-negative left eigenvector~$v$ of~$\nicefrac MS-I$ and hence of~$M$.
  { It is also known (see e.g.\ Darroch \&
    Seneta\cite{DarrochSeneta1967}) that there exists $C, \delta>0$ such
    that
    $\|\mathbb P_{\alpha}(X_t\in\cdot|X_t\notin \partial)-\nu\|_{\sss
      TV}\leq C \mathtt e^{-\delta t}$ for all $\alpha\in\mathcal P(E)$,
    which thus implies (A3).  } Finally, Assumption (A4) is trivially
  satisfied since $E$ is discrete.

  Thus, Theorem~\ref{thm:unbal-with-weights} applies, and we get that,
  almost surely when $n$ tends to infinity,
  $\tilde m_n \to \nu R/\nu R(E) = \nu$ (with respect to the topology
  of weak convergence), and thus, $U(n)/n\to v$, a result that dates
  back to Athreya \& Karlin's work on generalized P\'olya
  urns~\cite{AK}.
\end{remark}

\begin{remark}
  In the original P\'olya urn model, the replacement matrix is the
  identity and is not irreducible. In this case, there are several
  quasi-stationary distributions and thus Assumption~(A3) fails. We
  may thus say that the equivalent of the irreducible assumption in
  Athreya \& Karlin's result is our Assumption~(A3).
\end{remark}

In Section~\ref{sec:examples} we apply our result to many more
examples, and, in particular, to examples where the color space $E$ is
infinite, and even non-compact. Before that, in the rest of this
introduction, we discuss our result and its assumptions.

\subsection{Discussion of the result in view of the existing literature on MVPPs}
Our definition of a measure-valued P\'olya process is more general than the definition of Bandyopadhyay \& Thacker~\cite{BT++} and Mailler \& Marckert~\cite{MaillerMarckert2016}; indeed, their model can be obtained from ours by taking $R^{\sss (i)} = R$ almost surely for all $i\geq 1$ (deterministic replacement rule), and $P_x = \delta_x$ for all $x\in E$ (no weights). \cite{BT++} and~\cite{MaillerMarckert2016} also make the following assumptions:
\begin{itemize}
\item[(I)] $0<m_0(E)<+\infty$;
\item[(B)] for all $x\in E$, $R_x(E) = 1$;
\item[(E)] there exist two sequences $(a_n)_{n\geq 0}$ and
  $(b_n)_{n\geq 0}$ such that the Markov chain $(W_n)_{n\geq 0}$ on
  $E$ of transition kernel $(R_x)_{x\in E}$ satisfies
\[\frac{W_n - b_n}{a_n} \Rightarrow \nu,\]
in distribution when $n$ goes to infinity, independently from the initial distribution of~$W_0$.
\item[(R)] the sequences $(a_n)_{n\geq 0}$ and $(b_n)_{n\geq 0}$ are such that, for all $\varepsilon_n = o(\sqrt n)$, for all $x\in \mathbb R$,
\[\lim_{n\to\infty} \frac{b_{n+x\sqrt n + \varepsilon_n}-b_n}{a_n} = f(x) \quad \text{ and }\quad
\text{ and }
\lim_{n\to\infty} \frac{a_{n+x\sqrt n + \varepsilon_n}}{a_n} = g(x),\]
where $f$ and $g$ are two measurable functions,
\end{itemize}
The names of the assumptions are (I) for {\it initial composition}, (B) for {\it balance}, 
(E) for {\it ergodicity} and (R) for {\it regularity}.
Under these assumptions Mailler \& Marckert~\cite{MaillerMarckert2016} prove that (a slightly weaker version of this result is proved by~\cite{BT++}):
\begin{theorem}[Mailler \& Marckert~\cite{MaillerMarckert2016}]\label{th:MM}
If $(m_n)_{n\geq 0}$ is a MVPP that satisfies assumptions (I), (B), (E) and (R), then
\begin{equation}\label{eq:cv_MVPP}
n^{-1}m_n (a_{\log n} \,\cdot\, + b_{\log n}) \to \mu,
\end{equation}
in probability when $n$ goes to infinity,
for the topology of weak convergence,
where $\mu$ is the distribution of $f(\Lambda) + g(\Lambda)\Phi$, where $\Lambda\sim \mathcal N(0,1)$ and 
$\Phi\sim \nu$ are independent.
\end{theorem}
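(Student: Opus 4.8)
We sketch the proof via the coupling, going back to Bandyopadhyay \& Thacker and Mailler \& Marckert, between the MVPP and a branching Markov chain indexed by a random recursive tree. Because (B) makes each $R_x$ a probability measure and $P_x=\delta_x$ gives $m_nP=m_n$, the dynamics is simply: draw $Y_{n+1}\sim m_n/m_n(E)$ and set $m_{n+1}=m_n+R_{Y_{n+1}}$. \textbf{Step 1 (coupling).} Let $(\mathbf T_n)_{n\ge 0}$ be the increasing family of random recursive trees ($\mathbf T_0$ a single root, and $\mathbf T_n$ obtained from $\mathbf T_{n-1}$ by attaching a new node to a uniformly chosen existing node), and decorate the nodes with positions $(Z_v)_v$ in $E$: the root position is $\sim m_0/m_0(E)$ and, conditionally on the parent of $v$ sitting at $x$, $Z_v\sim R_x$, all draws independent. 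An induction on $n$ — drawing a ball proportionally to the current occupation measure and then adding the probability measure $R_{Y_{n+1}}$ is the same as attaching a new node to a uniform node and sampling its position from $R$ — shows that, when $m_0=\delta_{x_0}$, $m_n\overset{d}{=}\sum_{v\in\mathbf T_n}\delta_{Z_v}$, so that $n^{-1}m_n$ is, up to a vanishing correction, the empirical distribution of the positions of the $n+1$ nodes of $\mathbf T_n$. A general $m_0$ with $0<m_0(E)<\infty$ leads to a weighted root (or forest of roots), handled identically, its influence being asymptotically negligible by the ``independently of $W_0$'' clause of (E).

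\textbf{Step 2 (one random node).} The position of a uniformly chosen node of $\mathbf T_n$ is the value at generation $D_n$ of the Markov chain with kernel $R$, where $D_n$ is the depth of a uniform node. It is classical that $D_n$ is a sum of independent Bernoulli terms with $\mathbb E D_n\sim\log n$, $\mathrm{Var}\,D_n\sim\log n$, and $(D_n-\log n)/\sqrt{\log n}\Rightarrow\mathcal N(0,1)$; write $D_n=\log n+\Lambda_n\sqrt{\log n}$ with $\Lambda_n\Rightarrow\Lambda\sim\mathcal N(0,1)$. Applying (E) at the random large time $D_n$ — the ``independently of the initial distribution'' clause lets us forget the root position — the rescaled position $(Z-b_{D_n})/a_{D_n}$ converges to $\nu$; then (R), with the role of $n$ played by $\lfloor\log n\rfloor$ and $\varepsilon_n=o(\sqrt{\log n})$ absorbing the lower-order part of $D_n$, yields $(b_{D_n}-b_{\log n})/a_{\log n}\to f(\Lambda)$ and $a_{D_n}/a_{\log n}\to g(\Lambda)$. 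Combining,
\[
\frac{Z-b_{\log n}}{a_{\log n}}
=\frac{Z-b_{D_n}}{a_{D_n}}\cdot\frac{a_{D_n}}{a_{\log n}}+\frac{b_{D_n}-b_{\log n}}{a_{\log n}}
\;\Longrightarrow\; g(\Lambda)\Phi+f(\Lambda),
\]
with $\Phi\sim\nu$ independent of $\Lambda$, and the right-hand side has law $\mu$. Hence $\mathbb E\big[\,n^{-1}m_n(a_{\log n}\,\cdot\,+b_{\log n})\,\big]\to\mu$ for the weak topology.

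\textbf{Step 3 (from mean to probability).} Fix a bounded continuous $f:E\to\RR$ and set $\Psi_n=\int f\,\mathrm d\big(n^{-1}m_n(a_{\log n}\,\cdot\,+b_{\log n})\big)$. By Step~1, $\Psi_n=|\mathbf T_n|^{-1}\sum_v f(\hat Z_v)+o(1)$ with $\hat Z_v=(Z_v-b_{\log n})/a_{\log n}$, so $\mathrm{Var}(\Psi_n)=n^{-2}\sum_{u,v}\mathrm{Cov}(f(\hat Z_u),f(\hat Z_v))+o(1)$. For two uniform nodes $u,v$, their positions coincide with the chain up to their most recent common ancestor $w$, whose depth is tight (bounded in probability) and hence negligible against $\log n$; after $w$ the two positions run for $\sim\log n$ further steps from the \emph{same} starting point, so by (E) (again using independence of the limit from the start) $f(\hat Z_u)$ and $f(\hat Z_v)$ are asymptotically independent and $\mathrm{Cov}\to0$. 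A quantitative version — bounding the number of pairs $(u,v)$ with a ``deep'' common ancestor — gives $\mathrm{Var}(\Psi_n)\to0$, hence $\Psi_n\to\int f\,\mathrm d\mu$ in probability; taking a countable convergence-determining family of bounded continuous functions (possible since $E$ is Polish) upgrades this to convergence in probability for the weak topology, which is \eqref{eq:cv_MVPP}.

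\textbf{Main obstacle.} The two delicate points are: (a) the quantitative decorrelation estimate in Step~3, where one must genuinely control the common-ancestor depths and invoke (E) \emph{uniformly} over the starting point (and over large time indices, so that it may be applied at the random time governing the descendant's evolution); and (b) the substitution of the \emph{random} depth $D_n$ into (E) and (R) in Step~2 — this is precisely why (R) is required to hold uniformly over all shifts $\varepsilon_n=o(\sqrt n)$ — together with the asymptotic independence of the depth-fluctuation $\Lambda$ and the position-fluctuation $\Phi$, which follows once the rescaled chain converges to $\nu$ uniformly in its starting point. The bookkeeping for a general initial measure $m_0$ (a forest of roots rather than a single root) is then routine.
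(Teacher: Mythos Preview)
This theorem is not proved in the present paper: it is a quotation of the main result of Mailler \& Marckert~\cite{MaillerMarckert2016}, stated here only for context and comparison with Theorem~\ref{thm:unbal-with-weights}. Your sketch correctly reconstructs the strategy of~\cite{MaillerMarckert2016} (and~\cite{BT++}): the coupling with a branching Markov chain on the random recursive tree, the depth-of-a-uniform-node CLT combined with (E) and (R) to identify the limit of the mean measure, and a second-moment argument exploiting the shallow most-recent-common-ancestor of two uniform nodes to upgrade to convergence in probability. So there is nothing to compare against in this paper; you have essentially recovered the original authors' argument.
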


Note that Theorem~\ref{thm:unbal-with-weights} applies under (I), (B), (E) and (R) if we assume additionally that $a_n\equiv 1$ and $b_n\equiv 0$, and it gives that
\[\frac{m_n}{n} \to \nu \quad \text{ almost surely},\]
which improves the convergence in probability of Theorem~\ref{th:MM}.
Our theorem though does not cover the cases of more general renormalization 
sequences $(a_n)_{n\geq 1}$ and $(b_n)_{n\geq 1}$.

\medskip
In summary, our main contributions to the theory of MVPPs are to
\begin{itemize}
\item[($\alpha$)] remove the balance hypothesis (B) and replace it by the weaker (A1);
\item[($\beta$)] prove convergence almost sure in Equation~\eqref{eq:cv_MVPP} when $a_n \equiv 1$ and $b_n \equiv 0$;
\item[($\gamma$)] allow the weighting of the different elements of $E$, and to
\item[($\delta$)] {allow the} re-sampling {of} the replacement measures at each time-step in an i.i.d.\ way.
\end{itemize}
Our result was motivated by the classical P\'olya urn theory (see e.g.~\cite{Janson04}), in which all these features are standard. Since this paper was submitted, Janson~\cite{Janson19} generalised Theorem~\ref{th:MM} to the random replacement case, thus treating ($\gamma$) in that case. Also, Bandyopadyhay, Janson \& Thacker~\cite{BJT} prove almost sure convergence of a class of balanced MVPPs for which the set of colours is countable and under a condition of strong ergodicity for the underlying Markov chain, thus treating ($\beta$) in that case.

\begin{remark}
A standard generalization of finitely-many-color P\'olya urns is indeed to add weights (or activities): each color $x$ is given a weight $w(x)$, and, at every time-step, one picks a ball in the urn with probability proportional to the weights (vs.\ uniformly at random in the non-weighted model) and then applies the replacement rule associated to this color (see, e.g.~\cite{Janson04}). 
In our model, if $P_x = w(x)\delta_x$, where $w(x)$ is non-negative, then
\[\mathbb P(Y_{n+1}\in A \,|\,  m_n) = \frac{\int_A w(x) \,\mathrm dm_n(x)}{\int_E w(x) \,\mathrm dm_n(x)},\]
which corresponds to weighting the color~$x$ by a weight~$w(x)$.
The introduction of a weight kernel is a generalization of the weight concept: one can for example see~$P$ as a noise on the color drawn at random.
\end{remark}

\begin{remark}
  Our model, assumptions and result can be easily adapted to the
  situation where $R^{\sss (1)}$ is a kernel from $E$ to an other
  Polish state space $F$ and $P$ is a non-negative kernel from~$F$ to~$E$. 
  The main point of this extension is to check that the proof
  of Theorem~\ref{thm:unbal-with-weights}
  mainly makes use of the properties of
  the composed kernel $Q^{\sss (1)}$. For instance, in the $d$-color
  P\'olya urn model (see the end of Subsection~\ref{sec:defmodel}), if
  $\sum_{j=1}^d M_{i,j}>0$ for all $i\in\{1,\ldots,d-1\}$ and if
  $\sum_{j=1}^d M_{d,j}=0$, then one can choose $E=\{1,\ldots,d-1\}$
  and $F=\{1,\ldots,d\}$ together with the kernels
  $R^{\sss (i)}_{i,j}=R_{i,j}=M_{i,j}/S$ for all $(i,j)\in E\times F$
  and $P_{ij}=\1_{i\neq d}\delta_i$ for all $i\in F$. In this case, we
  thus have $Q^{\sss (1)}_{i,j}=Q_{i,j}=M_{i,j}/S$ for all
  $(i,j)\in E\times E$. If $M$ restricted to $E\times E$ is
  irreducible, we get that there exists a unique quasi-stationary
  distribution~$\nu$ on~$E$ for the continuous time Markov process~$X$
  with infinitesimal generator~$Q-I$ (see~\cite{DarrochSeneta1967}). 
  Hence, using our approach to MVPPs
  in this slightly more general context, we get that the
  $d$-color P\'olya urn converges almost surely, when
  $n\rightarrow+\infty$, to $\nu R/\nu R(E)$ 
  (which is a probability measure on $F$),
  a result that can be found, e.g., in~\cite{Janson04}.
\end{remark}

\begin{remark}
  The main idea in~\cite{BT++} and~\cite{MaillerMarckert2016} is to
  show a link between the MVPP of replacement kernel $R$ and the
  Markov chain of kernel $R$. This relationship breaks down if the
  balance assumption is not satisfied since $R$ is no longer a
  probability kernel but a sub-Markovian kernel (we can assume without
  loss of generality that the upper bound of $\sup_x R_x(E)$
  is~$1$). Our main idea to relax the balance assumption is to add an
  absorbing state $\partial$ that ``makes'' the transition kernel Markovian; 
  note that this idea is similar to adding ``dummy'' balls in the
  finitely-many-color case (see \cite{Janson04}).  The ergodicity
  assumption (E) then naturally becomes Assumption (A3) that the Markov
  chain has a quasi-stationary distribution.

  The link between P\'olya urns and quasi-stationary distributions
  already exists in the literature; for example, Aldous, Flannery and
  Palacios~\cite{AldousFlanneryEtAl1988} apply the convergence results
  of Athreya and Karlin~\cite{AK} to approximating quasi-stationary
  distributions on a finite state space.  Our main result generalizes
  this work to the case of measure-valued P\'olya processes.
\end{remark}

\begin{remark}
Another difference with~\cite{BT++} and~\cite{MaillerMarckert2016}
is that Theorem~\ref{thm:unbal-with-weights} naturally covers periodic
transition kernels since we consider the continuous time process
associated to it, which is never periodic.
\end{remark}

\subsection{Discussion of the assumptions}
\label{sub:disc}

In Assumption~(A1), we assume that $Q_x(E)$ is uniformly bounded from
above by~$1$. If the supremum $\kappa=\sup_{x\in E} Q_x(E)$ is finite
(but larger than~$1$), one can consider the process defined by
$\hat m_n := m_n/\kappa$ for all $n\geq 0$. One can easily check that
$\hat m_n$ is an MVPP with parameters
$\hat{R}^{\sss (i)}=R^{\sss (i)}/\kappa$, $\hat P=P$, and
$\hat Q=\hat R \hat P$, and such that $\hat m_0=m_0/\kappa$. Also, it
satisfies $\hat Q_x(E)\leq 1$ as in Assumption~(A1).

For the lower bound, we assume that 
the random value $Q_x^{\sss (i)}(E)$ stochastically dominates 
an integrable probability measure~$\mu$ on~$\R$ with mean $c_1>0$. 
This is used to prove that, for any fixed $c'\in(\theta,c_1)$
\[
  \liminf_{n\rightarrow+\infty} \,\frac{m_nP(E)}{n}\geq c',
\]
almost surely; this is done by a coupling argument (see
Lemma~\ref{lem:cv_sigma_k}).  An alternative assumption, which may be
particularly useful when $Q_x^{\sss (i)}(E)$ can take negative values
as in Subsection~\ref{sec:remball} below, is that there exist $c_1>0$
and $\beta>1$ such that
\begin{align}
  \label{eq:alt-A1}
 c_1 \leq \inf_{x\in E} Q_x(E)\leq \sup_{x\in E} Q_x(E)\leq 1\quad\text{ and }\quad  \sup_{x\in E}\, \E\left|Q_x^{\sss (i)}(E)-Q_x(E)\right|^{\beta}<+\infty.
\end{align}
For instance, in the example developed in Remark~\ref{remNfinite},
take $E=\{1,2\}$ and
\[
  M^{\sss (n)}=
  \varepsilon_n
  \begin{pmatrix}
      -1& 0\\
      0& 1
    \end{pmatrix}
    + (1-\varepsilon_n)
    \begin{pmatrix}
      1& 2\\
      1& 0
    \end{pmatrix},
  \]
  where $(\varepsilon_n)_{n\geq 1}$ is a sequence of i.i.d.\ Bernoulli
  random variables with parameter $\nicefrac12$. Then any probability
  measure $\mu$ on $E$ as in Assumption~(A1) has non-positive mean, so
  that this assumption is not satisfied. However,
  Assumption~\eqref{eq:alt-A1} is satisfied with $c_1=1$.

\medskip
Assumption (A2) is a Lyapunov assumption and is standard in the study
of the ergodicity of Markov processes. {\color{black} In
  Section~\ref{sec:examples}, we show how to apply our main result to
  examples, and therefore give examples of such Lyapunov
  functions. There is no general method to find Lyapunov functions,
  except testing functions from classical families (polynomials,
  exponentials, etc). For instance, for processes in $\Z$, $\R$ or
  $\R^d$ with a drift towards~$0$, exponential or power functionals of
  the distance to~$0$ often prove to be useful. Sometimes,
  probabilistic arguments can help find a Lyapunov function; indeed,
  if, for some $\theta\in(0,1)$, $\E_x \left[\theta^{\tau_K}\right]$
  is finite for all $x\in E$ (where $\tau_K$ denotes here the first
  entry time in a set $K$ of a discrete-time Markov chain with
  transition probability given by $Q$), then
  $V : x\mapsto \E_x\left[\theta^{\tau_K}\right]$ satisfies
  $Q_x\cdot V\leq \theta V(x)$ for all $x\in E\setminus K$.}

When
$Q_x(E)=1$ for all $x\in E$, the existence of a Lyapunov function for
$Q$ can be used to prove the ergodicity of the Markov process~$X$.
More precisely, if compact subsets of $E$ are \textit{petite sets} for
$X$, then the existence of a Lyapunov function entails the ergodicity
of~$X$ (see Meyn \& Tweedie~\cite{MeynTweedie1993}, for the definition
of a petite set and for the deduction that~$X$ is ergodic) and hence
Assumption~(A3). Note that our proof does not seem to generalize to
the case of a weaker form of Lyapunov function (satisfying, for
instance, $Q_x(V)\leq V(x)-V^{1/2}(x)+C$ for all $x\in E$), although
those weaker forms are generally sufficient to prove the ergodicity of
the process.

When $Q$ is a sub-Markovian kernel, it has been recently proved
in Champagnat \& Villemonais~\cite{ChampagnatVillemonais2017} that the Lyapunov condition
(A2-ii), with additional suitable assumptions, can be used to prove
the existence of a quasi-stationary distribution $\nu$ and to prove
that the domain of attraction of $\nu$ contains
$\{\alpha\in\cP(E)\mid \alpha \cdot V^{\nicefrac1q}<\infty\}$. These criteria will be used extensively
in our examples. Note that this result, when applicable, entails the
existence of a quasi-stationary distribution $\nu$ and the uniform convergence
of Assumption~(A3) in total variation norm.

For conditions implying Assumption~(A3), we also refer the reader to Villemonais~\cite{Villemonais2015}
where the case of birth and death processes is considered,
to Gosselin~\cite{Gosselin2001}, and Ferrari, Kesten \& Mart\'inez~\cite{FerrariKestenEtAl1996} for population processes
and the utility of the theory of $R$-positive matrices in this
matter. This is also implied by the general results provided in Champagnat \& Villemonais~\cite{ChampagnatVillemonais2016}.

\subsection{Removing balls from the urn}
\label{sec:remball}

In the finitely-many-color case, it is often allowed to remove balls
from the urn, i.e.\ the coefficients of the replacement matrix can be
negative.  In Theorem~\ref{thm:unbal-with-weights}, we have assumed
that the measures~$(R_x)_{x\in E}$ are positive, but we can in fact
consider situations where $(R_x)_{x\in E}$ are signed kernels as soon as they
satisfy additional assumptions (which are already implied by
conditions (A1-4) when $(R_x)_{x\in E}$ are positive measures). In
Section~\ref{sec:examples}, we give examples that fall into this
special framework.

In this section, we assume that $(R^{\sss (i)}_x)_{x\in E}$ is almost surely a signed kernel such that, for all $x\in E$, $Q_x$ restricted to $E\setminus \{x\}$ is a positive measure and $Q_x(\{x\})\in \R$. We assume that
\begin{itemize}
\item[(T)] for all $n\geq 0$, $m_n$ is almost surely a positive measure.
\end{itemize}
In the finitely-many-color case, this assumption is called
tenability. It is clearly satisfied when Assumption~(T${}_{>0}$) holds
true. We refer the reader to~\cite[Definition~1.1-(iii)]{Pouyanne2008}
for a sufficient condition for tenability in the finite state space
case.  As will appear in the examples section, tenability is often naturally satisfied.

In the case when $(R^{\sss (i)}_x)_{x\in E}$ is allowed to be a signed kernel, we need
to replace Assumption~(A2) by:
\begin{itemize}
\item[(A'2)] there exist a locally bounded function
  $V\,:\,E \to [1,+\infty)$ and some constants { $r>1$, $p> 2$,
  $q'>q:=p/(p-1)$,} $\theta\in(0,c_1)$, $K>0$, $A\geq 1$, and $B\geq 1$, such that
\begin{itemize}
\item[(i)] for all $N\geq 0$, 
the set $\{x\in E \colon V(x)\leq N\}$ is relatively compact.
\item[(ii)]  for all $x\in E$,
\[Q_x \cdot V\leq \theta V(x) + K \quad\text{and}\quad Q_x \cdot V^{\nicefrac1q}\leq \theta V^{\nicefrac1q}(x) + K \quad (\forall x\in E).\]
\item[(iii)] for all continuous functions $f:E\rightarrow\R$ bounded by~$1$ and all $x\in E$,
  \[
|Q_x\cdot f|^{q'} \vee \E\big[\big| {R}^{\sss (i)}_{x} \cdot f - R_{x}\cdot
    f\big|^r\big]\vee \E\big[\big| {Q}^{\sss (i)}_{x} \cdot f -
    Q_{x}\cdot f\big|^p\big] \leq AV(x),
  \]
\item[(iv)] and
  \[
  |Q_x\cdot V^{\nicefrac1q}|^{q}\vee |Q_x\cdot V|\vee  \E\left[\left|Q^{\sss (i)}_x\cdot V^{\nicefrac1q}-Q_x\cdot V^{\nicefrac1q}\right|^{r}\right] \leq B V(x).
  \]
\end{itemize}
\end{itemize}

Assuming in addition that Assumptions (A1), (A3) and (A4) are satisfied, the
conclusions of Theorem~\ref{thm:unbal-with-weights} hold true. Since
the set of assumptions (T, A1, A'2, A3, A4) is actually implied (see Lemma~\ref{lem:A->A'} below) by the assumptions of Theorem~\ref{thm:unbal-with-weights}, we prove this
result in the more general situation of the present subsection. 

{
\begin{lemma}\label{lem:A->A'}
Assumptions (T${}_{>0}$, A1-4) imply Assumptions (T, A1, A'2, A3, A4).
\end{lemma}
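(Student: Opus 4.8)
The plan is to peel off the trivial parts and reduce to a single implication. Assumptions (A1), (A3) and (A4) appear verbatim in both lists, so nothing is required for them. Assumption (T) is immediate from (T${}_{>0}$): the latter makes each $R^{\sss (n+1)}_{Y_{n+1}}$ almost surely a non-negative measure, so $m_n=m_0+\sum_{k=1}^n R^{\sss (k)}_{Y_k}$ is almost surely a non-zero non-negative measure by induction on $n$. For (A'2) I would keep the Lyapunov function $V$ and the exponent $p$ furnished by (A2) — so (A'2)(i) is (A2)(i) and $p>2$ holds — and construct the remaining constants $r>1$, $q'>q$, $\theta'\in(0,c_1)$, $K'>0$, $A'\geq1$, $B'\geq1$ as follows.

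For (A'2)(ii), since $V\geq1$ and $t\mapsto t^{1/q}$ is concave and subadditive on $[0,\infty)$, Jensen's inequality for the sub-probability measure $Q_x/Q_x(E)$ together with $Q_x(E)\leq1$ (from (A1)) gives $Q_x\cdot V^{1/q}\leq Q_x(E)^{1-1/q}(Q_x\cdot V)^{1/q}\leq(\theta V(x)+K)^{1/q}\leq\theta^{1/q}V^{1/q}(x)+K^{1/q}$; and since $\theta\in(0,1)$ and $q>1$ imply $\theta<\theta^{1/q}$, the original bound $Q_x\cdot V\leq\theta V(x)+K$ holds a fortiori with $\theta^{1/q}$ in place of $\theta$. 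Thus both inequalities of (A'2)(ii) hold with $\theta':=\theta^{1/q}$ and $K':=1\vee K\vee K^{1/q}$. The only non-routine point here — and essentially the only place the precise lower bound on $p$ in (A2)(iii) is used — is that $\theta'=\theta^{1/q}$ lies in $(0,c_1)$: writing $1/q=1-1/p$, taking logarithms, and using $\ln\theta<0$ and $\ln(\theta/c_1)<0$, one checks that $\theta^{1/q}<c_1$ is exactly equivalent to $p>\ln\theta/\ln(\theta/c_1)$.

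The moment bounds (A'2)(iii) and the two deterministic bounds in (A'2)(iv) then follow from crude estimates. By (T${}_{>0}$), $R^{\sss (i)}_x$ and hence $Q^{\sss (i)}_x=R^{\sss (i)}_xP$ are almost surely non-negative, so for continuous $f$ with $\|f\|_\infty\leq1$ one has $|R^{\sss (i)}_x\cdot f|\leq R^{\sss (i)}_x(E)$ and $|Q^{\sss (i)}_x\cdot f|\leq Q^{\sss (i)}_x(E)$, and likewise for $R_x$, $Q_x$; combining this with $(a+b)^s\leq2^{s-1}(a^s+b^s)$ (for $s\geq1$), Jensen's inequality $R_x(E)^r=(\E[R^{\sss (i)}_x(E)])^r\leq\E[R^{\sss (i)}_x(E)^r]$ (and similarly for $Q$), and (A2)(iii), we obtain $\E[|R^{\sss (i)}_x\cdot f-R_x\cdot f|^r]\leq2^r A\,V(x)$ and $\E[|Q^{\sss (i)}_x\cdot f-Q_x\cdot f|^p]\leq2^p A\,V(x)$. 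Moreover $|Q_x\cdot f|^{q'}\leq Q_x(E)^{q'}\leq1\leq V(x)$ for any exponent, so one may take $q':=p$ (note $p>q$ since $p>2$); and using (A'2)(ii) and Jensen once more, $|Q_x\cdot V^{1/q}|^q\leq Q_x\cdot V\leq(\theta+K)V(x)$ and $|Q_x\cdot V|\leq(\theta+K)V(x)$.

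The only genuinely delicate term — and the step I expect to be the main obstacle — is $\E[|Q^{\sss (i)}_x\cdot V^{1/q}-Q_x\cdot V^{1/q}|^r]$ in (A'2)(iv). Here I would first observe that $r$ may be decreased for free: if $r_0>1$ denotes the exponent from (A2)(iii), then by Lyapunov's inequality for moments and $V\geq1$, $\E[R^{\sss (i)}_x(E)^r]\leq(\E[R^{\sss (i)}_x(E)^{r_0}])^{r/r_0}\leq(A\vee1)V(x)$ for all $r\in(1,r_0]$, so we may choose $r\in\bigl(1,\,r_0\wedge\tfrac{p^2q}{p^2+q}\bigr]$ (this upper bound exceeds $1$ because $p>1$, and is $<q$). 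With such an $r$, the triangle inequality, the Jensen/Hölder bound $Q^{\sss (i)}_x\cdot V^{1/q}\leq(Q^{\sss (i)}_x\cdot V)^{1/q}Q^{\sss (i)}_x(E)^{1/p}$ (valid since $1/p+1/q=1$), and Hölder in expectation with exponents $q/r$ and $q/(q-r)$ bound the term by a constant times $\bigl(\E[Q^{\sss (i)}_x\cdot V]\bigr)^{r/q}\bigl(\E[Q^{\sss (i)}_x(E)^{rq/(p(q-r))}]\bigr)^{(q-r)/q}$; by definition of $R$ and Tonelli's theorem $\E[Q^{\sss (i)}_x\cdot V]=Q_x\cdot V\leq(\theta+K)V(x)$, and the choice of $r$ makes $rq/(p(q-r))\leq p$, so the second factor is controlled by $\E[Q^{\sss (i)}_x(E)^p]\leq AV(x)$ via another application of Lyapunov's inequality — leaving a bound of the form $C\,V(x)$. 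Finally one takes $A'$, $B'$, $K'$ to be the maxima of the finitely many constants produced above (respecting $A'\geq1$, $B'\geq1$, $K'>0$), which gives (A'2). The real work is thus bookkeeping: verifying that a single reduced exponent $r$ and a single contraction constant $\theta'$ can be made to satisfy all of (A'2) simultaneously, the only non-elementary input being the calibration of the constraint on $p$ noted in the second step.
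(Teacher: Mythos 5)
Your proposal is correct and follows essentially the same route as the paper's own proof: the same Hölder/concavity argument yielding $Q_x\cdot V^{\nicefrac1q}\leq \theta^{\nicefrac1q}V^{\nicefrac1q}(x)+K^{\nicefrac1q}$ with the calibration $\theta^{\nicefrac1q}<c_1$ forced by the lower bound on $p$, the same crude $2^{s-1}$/Jensen estimates for (A'2-iii), and the same Hölder-in-expectation treatment of $\E\big[|Q^{\sss(i)}_x\cdot V^{\nicefrac1q}-Q_x\cdot V^{\nicefrac1q}|^r\big]$ after shrinking $r$ (your explicit threshold $r\leq \nicefrac{p^2q}{(p^2+q)}$ is exactly the paper's condition $\nicefrac{r'(q-1)}{(q-r')}\leq p$). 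The only differences are cosmetic (e.g.\ bounding $|Q_x\cdot f|^{q'}$ by $1\leq V(x)$ rather than via $\E[Q^{\sss(1)}_x(E)^{q'}]$), so nothing further is needed.
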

\begin{proof}
  The fact that Assumption~(T${}_{>0}$) implies Assumption~(T) is
  straightforward.  Fix $q=p/(p-1)$; using H\"older's inequality
  ($q\geq 1$) and Assumption (A2-ii), we get, for all $x\in E$,
\[(Q_x\cdot V^{\nicefrac1q})^q \leq Q_x(E)^{\nicefrac{q}{p}}\,Q_x\cdot V
\leq \theta V(x) + K.
\]
Using the fact that, by concavity, for all $a\leq 1$ and $u\geq 0$, 
$(1+u)^a \leq 1 + u^a$, we thus get
\[Q_x\cdot V^{\nicefrac1q}
\leq \theta^{\nicefrac1q} V^{\nicefrac1q}(x) + K^{\nicefrac1q}.\]
To prove (A'2-ii), it is thus enough to show that $\theta^{\nicefrac1q} < c_1$. This follows since, by assumption on~$p$,
\[\frac1q \ln\theta = (1-\nicefrac1p) \ln\theta
< \left(1-\frac{\ln(\nicefrac\theta{c_1})}{\ln\theta}\right)\ln \theta
= \ln c_1.
\]
Now we prove (A2-iii); first note that, since $q':=p>q>1$, we have, by Jensen's inequality, for all continuous function bounded by~1,
\[|Q_x\cdot f|^{q'}\leq \mathbb E\left[|Q^{\sss (1)}_x\cdot f|^{q'}\right]
\leq  \mathbb E\left[Q^{\sss (1)}_x(E)^{q'}\right]\leq AV(x),
\]
where we have used (A2-iii).  Similarly, for all $r'\in(1,r]$, using
the convexity of $u\mapsto u^{r'}$ and Jensen's inequality,
we get that,
\[\mathbb E\left[|R_x^{\sss (1)}\cdot f - R_x\cdot f|^{r'}\right]
  \leq 2^{r'-1}\mathbb E\left[|R_x^{\sss (1)}\cdot f|^{r'}+| R_x\cdot
    f|^{r'}\right] \leq 2^{r'}\mathbb E \big[|R^{\sss (1)}_x\cdot
  f|^{r'}\big]\leq A2^{r'-1}V(x),\] and similarly for
$\mathbb E\big[|Q_x^{\sss (1)}\cdot f - Q_x\cdot f|^p\big]$.

It only remains to prove (A2-iv). We have, using H\"older's inequality, the fact that $Q_x$ is non-negative and the fact that $V(x)\geq 1$,
\[
  \left|Q_x\cdot V^{\nicefrac1q}\right|^q= \left(Q_x\cdot V^{\nicefrac1q}\right)^q\leq Q_x(E)^{\nicefrac{q}{p}}\,Q_x\cdot V\leq Q_x\cdot V \leq \theta\,V(x)+K\leq (\theta+K)\,V(x).
\]
Then, using the convexity of $u\mapsto u^{r'}$ and Jensen's inequality,
we get that
\begin{align*}
  \mathbb E\left[|Q_x^{\sss (1)}\cdot V^{\nicefrac1q} - Q_x\cdot V^{\nicefrac1q}|^{r'}\right] \leq
  2^{r'-1}\mathbb E\left[|Q_x^{\sss (1)}\cdot V^{\nicefrac1q}|^{r'}+| Q_x\cdot
    V^{\nicefrac1q}|^{r'}\right] \leq 2^{r'}\mathbb E \big[(Q^{\sss (1)}_x\cdot
  V^{\nicefrac1q})^{r'}\big]. 
\end{align*}
Now, using H\"older's inequality, we obtain
\begin{align*}
 Q^{\sss (1)}_x\cdot
  V^{\nicefrac1q}\leq (Q^{\sss (1)}_x\cdot V)^{1/q}Q^{\sss (1)}_x(E)^{1/p}.
\end{align*}
Using again H\"older's inequality, we have, setting $\varpi=\nicefrac{q}{r'}$, 
\begin{align*}
  \mathbb E \big[(Q^{\sss (1)}_x\cdot V^{\nicefrac1q})^{r'}\big]
  &\leq \mathbb E \big[(Q^{\sss (1)}_x\cdot V)^{\frac{\varpi r'}{q}}\big]^{1/\varpi}\,
    \mathbb E \big[Q^{\sss (1)}_x(E)^{\frac{r'\varpi}{p(\varpi-1)}}\big]^{(\varpi-1)/\varpi}\\
  &\leq  \mathbb E \big[Q^{\sss (1)}_x\cdot V\big]^{1/\varpi}
   \, \mathbb E \big[1+Q^{\sss (1)}_x(E)^p\big]^{(\varpi-1)/\varpi},
\end{align*}
where we used that ${\frac{r'\varpi}{p(\varpi-1)}}=\frac{r' (q-1)}{q-r'}\leq p$ for $r'$
small enough in $(1,r]$. Using Assumption~(A2-ii), we get $\mathbb E \big[Q^{\sss (1)}_x\cdot V\big]=Q_x(V)\leq (\theta+K)V(x)$ and, using Assumption~(A2-iii), $\mathbb E \big[Q^{\sss (1)}_x(E)^p\big]\leq A V(x)$. We finally deduce that
\[
  \mathbb E \big[(Q^{\sss (1)}_x\cdot V^{\nicefrac1q})^{r'}\big]\leq (\theta+K+1+A)\,V(x),
\]
where we have used that $1/\varpi+(\varpi-1)/\varpi=1$.
This concludes the proof.
\end{proof}
}

\begin{remark}
  { When $Q_x(\{x\})$ is not bounded uniformly in $x$,
    the infinitesimal generator $Q-I$ may not
    define a unique sub-Markovian transition kernel $(P_t)_{t\geq 0}$,
    and hence a unique 
    pure jump Markov process~$X$ (in distribution). The
    problem of existence and uniqueness of such a transition kernel
    has been considered in great generality by Feller
    in~\cite{Feller1940} and is also studied in details
    in~\cite[Chapter~2]{Chen2004}.  In our case, Assumption~(A2-ii) and
    Theorem~\cite[Theorem~2.25]{Chen2004} imply that $Q-I$ uniquely
    determines a sub-Markovian semi-group $(P_t)_{t\in[0,+\infty)}$ and
    hence a unique jump-process $X$ (in distribution). As a consequence,
    Assumption~(A3) remains unambiguous when Assumption~(T${}_{>0}$,A2) is
    replaced by Assumption~(T,A'2).}
\end{remark}

\bigskip {\bf Plan of the paper: }In Section~\ref{sec:examples}, we
apply Theorem~\ref{thm:unbal-with-weights} to several examples.  In
particular, in Section~\ref{sub:trees}, we look at examples that come
from studying different characteristics (degree distribution,
protected nodes) in random recursive trees or forests. In
Section~\ref{sub:samplepaths}, we detail the case when the replacement
kernels are the occupation measures of Markov processes, in discrete
and continuous time, and show how one can apply these results to the
numerical approximation of QSDs on a non-compact space (see
Section~\ref{sub:approx_QSD}).  Finally, Section~\ref{sec:proof}
contains the proof of Theorem~\ref{thm:unbal-with-weights}.

\section{Examples}\label{sec:examples}

\subsection{Markov chains}
\subsubsection{Ergodic Markov chains}
In~\cite{MaillerMarckert2016}, the following example is treated:
take $E=\mathbb N := \{0,1,2,\ldots\}$, fix $0<\lambda<\mu$, and set
\[R_x = \frac{\lambda}{x\mu+\lambda} \delta_{x+1} + \frac{x\mu}{x\mu+\lambda}\delta_{x-1},\]
for all $x\neq 0$, and $R_0 = \delta_1$.
This example is not weighted, meaning that $P_x = \delta_x$ for all $x\in E$, 
and balanced since $R_x(E) = 1$ for all $x\in E$.
Note that the Markov chain of transition kernel $R$ is the $M/M/\infty$ queue.
Theorem~\ref{th:MM} implies that this MVPP satisfies
\[n^{-1}m_n \to \gamma \quad \text{ in probability},\]
where $\gamma$ is the stationary measure of the $M/M/\infty$ queue, i.e.\
\[\gamma(x) = \left(\frac{\lambda}{\mu}\right)^x \frac{\mathrm e^{-\lambda/\mu}}{x!} \quad (\forall x\in\mathbb N).\]
Let us show how our result implies almost-sure convergence of this
MVPP.  Note that, in this example, the $R^{\sss (i)}$ are
deterministic and equal to $R$, $P_x = \delta_x$; therefore,
$Q^{\sss (i)}=Q=R$ ($\forall i\geq 1$). Since $R_x(E) = 1$ for all
$x\in\mathbb N$, then (A1) is satisfied (we can take $\mu=\delta_1$,
and thus, $c_1 = 1$).  Assumption (A2) also holds: one can take
$V(x) = \mathrm e^x$, implying that
\[R_x \cdot V = \frac{\lambda \mathrm e^{x+1}+\mu x \mathrm e^{x-1}}{\lambda+\mu x}
= \frac{\lambda \mathrm e^2 + \mu x}{\lambda+\mu x} \, \mathrm e^{x-1}
= \frac{\lambda \mathrm e^2 + \mu x}{\mathrm e(\lambda+\mu x)}V(x).\]
Note that
\[\frac{\lambda \mathrm e^2 + \mu x}{\mathrm e(\lambda+\mu x)} < \frac{2}{\mathrm e} \Leftrightarrow x > \frac{\lambda(\mathrm e^2-2)}{\mu},\]
therefore,
\[R_x\cdot V \leq \theta\, V(x) + K,\]
where $\theta=\frac{2}{\mathrm e}\in(0,c_1)$ and $K = \sup_{x\leq \lambda(\mathrm e^2-2)/\mu} R_x \cdot V$.
Also note that, for all $r,p>1$, we have
\[\mathbb E R_x^{\sss (1)}(E)^r \vee \mathbb E Q_x^{\sss (1)}(E)^p
  =R_x(E)^r \vee R_x(E)^p = 1,\] implying that (A2-iii) holds.  Since
the queue $M/M/\infty$ is ergodic with stationary distribution
$\gamma$, we can infer that the continuous-time Markov process of
generator $R-I$ is also ergodic and the domain of attraction of
$\gamma$ is $\mathcal P(\mathbb N)$. Moreover, the
same procedure as in the proof of Lemma~\ref{lem:A->A'} shows that,
for any $q>1$,
$Q_x\cdot V^{\nicefrac1q}\leq \theta^{1/q} V(x)+K^{\nicefrac1q}$,
where $\theta^{\nicefrac1q}< 1$. This and the Foster-Lypanuov type
criteria of~\cite{MeynTweedie1993} provide the uniform convergence to
$\nu$ required in Assumption (A3).  Finally, since $\mathbb N$ is
discrete, (A4) is trivially satisfied.  Thus,
Theorem~\ref{thm:unbal-with-weights} applies and we can conclude that
if $\sum_{k\geq 0} \mathrm e^k m_0(k)$ is finite, then
\[n^{-1}m_n \to \gamma\quad\text{ almost surely when }n\to\infty.\]

\subsubsection{Quasi-ergodic Markov chains}

Let us now consider the more general case where $E=\N$ and, for all $x\in E$,
\[
  R_x=\lambda_x\delta_{x+1}+\mu_x\delta_{x-1},
\]
where $(\lambda_x)_x$ and $(\mu_x)_x$ are families of positive numbers
such that  $\mu_0=0$, $\lambda_0>0$, $\inf_{x\geq 1} \mu_x>0$, $\sup_x \mu_x<\infty$ and
$\lambda_x=o(\mu_x)$ when $x\rightarrow+\infty$.  In this situation,
the MVPP is not weighted, so that $P_x=\delta_x$ and $Q_x=R_x$ for all
$x\in E$, and it is not balanced (hence Theorem~\ref{th:MM} does not apply).

We assume, without loss of generality, that
$\sup_x (\lambda_x+\mu_x)=1$, so that $Q_x(E)\leq 1$ for all $x\in
E$. Let $\mu$ be the Dirac mass at $\inf_x (\lambda_x+\mu_x)$, which
is positive. 
 Assumption
(A1) is satisfied with this choice of $\mu$, and
$c_1 = \inf_x (\lambda_x+\mu_x)$.  Let
\[
  V(x)=\mathrm e^{ax}\quad\text{with}\quad a>0\text{ such that }e^{-a}\leq c_1/4. 
\]
Assumption (A2-i) is clearly satisfied, and (A2-ii) can be checked easily: 
for all $x\in E$, 
\begin{align*}
  Q_x\cdot V&=\lambda_x\mathrm e^{a(x+1)}+\mu_x\mathrm e^{a(x-1)}
  =V(x)\left(\lambda_x \mathrm e^a+\mu_x \mathrm e^{-a}\right)\\
       &\leq V(x)\sup_y\mu_y \left(\frac{\lambda_x}{\mu_x}\mathrm e^a
       +\mathrm e^{-a}\right)\leq V(x) \left(\frac{\lambda_x}{\mu_x}\mathrm e^a+\frac{c_1}{4}\right)\\
       &\leq \theta V(x)+K,
\end{align*}
where $\theta=\frac{c_1}{2}$ and
$K=\max\left\{V(y)
  \left(\frac{\lambda_y}{\mu_y}\mathrm e^a+\frac{c_1}{4}\right),\text{ with
  }y\text{ s.t. }\frac{\lambda_y}{\mu_y}\mathrm e^a+\frac{c_1}{4}\geq
  \frac{c_1}{2}\right\}$ (note that this last set is finite by
assumption and hence that $K<\infty$). Since $R_x(E)=Q_x(E)$ is
uniformly bounded from above, (A2-iii) is trivial for any fixed $p>2\vee \frac{\ln \theta}{\ln\theta-\ln c_1}$. 
Assumption~(A4) is also clearly satisfied in this case since $E$ is discrete.

The same procedure as in the proof of Lemma~\ref{lem:A->A'} shows that $Q_x\cdot V^{\nicefrac1q}\leq \theta^{1/q} V(x)+K^{\nicefrac1q}$, where $\theta^{\nicefrac1q}< c_1$ since we fixed $p>\frac{\ln \theta}{\ln\theta-\ln c_1}$. Now, using Theorem~5.1 and Remark~11 in~\cite{ChampagnatVillemonais2017}
for the irreducible process $X$ with infinitesimal generator $Q-I$,
we deduce that there exist a quasi-stationary distribution
$\nu_{QSD}$ for $X$ and two positive constants $\mathrm{Cst},\delta>0$ such that, 
for all probability measure $\alpha\in E$,
satisfying $\alpha\cdot V^{\nicefrac1q}<+\infty$,
\begin{align*}
 \|\P_\alpha(X_t\in \cdot\mid t<\tau_\d)-\nu_{QSD}\|_{\sss TV}\leq \mathrm{Cst}\,\alpha\cdot V^{\nicefrac1q}\,\mathrm e^{-\delta t},
\end{align*}
which entails Assumption~(A3) and provides a candidate for the long
time behavior of the MVPP $m_n/m_n(E)$.

Finally, using the fact that $\nu_{QSD}(Q-I)=-\lambda_0\nu_{QSD}$ for some
$\lambda_0>0$ (this is a classical property of quasi-stationary
distributions, see for instance~\cite{DoornPollett2013}) and hence that $\nu_{QSD} R$ is
proportional to $\nu_{QSD}$,  Theorem~\ref{thm:unbal-with-weights} entails
that, if $\sum_{k\geq 0} \mathrm e^{ak} m_0(k)$ is finite, then
\begin{align*}
  \frac{m_n}{m_n(E)}\xrightarrow[n\rightarrow+\infty]{a.s.} \frac{\nu_{QSD}R}{\nu_{QSD} R(E)}=\nu_{QSD}.
\end{align*}
with respect to the topology of weak convergence.

\subsection{Random trees}\label{sub:trees}
As discussed in Janson~\cite[Examples~7.5 and~7.6]{Janson04},
infinitely-many-color urns are particularly useful for the study of some functionals of random trees;
we give below two examples where our main result applies, and gives stronger convergence results.

\subsubsection{Outdegree profiles}\label{sub:out-degrees}
\begin{definition}\label{df:profile}
We define the out-degree profile of a rooted tree $\tau$ as
\[\mathrm{Out}(\tau) = \sum_{\nu\in\tau} \delta_{\mathrm{outdeg}(\nu)},\]
where for all node $\nu$ in $\tau$, $\mathrm{outdeg}(\nu)$ is the out-degree of $\nu$ (i.e.\ its number of children).
\end{definition}

\vspace{\baselineskip}
\noindent{\bf Out-degree profile in the random recursive tree.}
The random recursive tree $(\mathrm{RRT}_n)_{n\geq 1}$ is a sequence of random rooted trees defined recursively as follows:
\begin{itemize}
\item $\mathrm{RRT}_1$ has one node (the root);
\item we build $\mathrm{RRT}_{n+1}$ from $\mathrm{RRT}_n$ by choosing a node of $\mathrm{RRT}_n$ uniformly at random, 
and adding a child to this node.
\end{itemize}
It is straightforward to see that the sequence $(\mathrm{Out}(\mathrm{RRT}_n))_{n\geq 1}$ of 
the out-degree profile of the random recursive tree is a MVPP on $\mathbb N$ of initial composition $m_1=\delta_0$, and replacement kernel
\[R_x = -\delta_x + \delta_0 + \delta_{x+1}\quad (\forall x\geq 0).\]
Note that the replacement measures $R_x$ are not positive, but the process satisfies Assumption~(T) by definition 
and thus this MVPP falls into the framework of Section~\ref{sec:remball}.
In this case, $P_x=\delta_x$, and $R^{\sss (i)} = R = Q$ almost surely for all $i\geq 1$.
Note that $Q_x(\mathbb N) = 1$ for all $x\in\mathbb N$, and, therefore, Assumption (A1) holds with $\mu = \delta_1$ and $c_1 = 1$.

Fix $\varepsilon\in(0,\nicefrac12)$ 
and let $V(x)= (2-\varepsilon)^x$ for all $x\geq 0$; Assumption (A'2-i) holds, and we have
\[Q_x\cdot V = -(2-\varepsilon)^x + 1 + (2-\varepsilon)^{x+1} = 1 + (1-\varepsilon)V(x),\]
 for all $q\in(1,2]$,
\[Q_x\cdot V^{\nicefrac1q} 
= -(2-\varepsilon)^{\nicefrac x q} + 1 + (2-\varepsilon)^{{\nicefrac {(x+1)}q}} 
= 1 + ((2-\varepsilon)^{\nicefrac1q}-1)V(x)^{\nicefrac1q}
\leq 1 + (1-\varepsilon)V(x)^{\nicefrac1q},\]
since $\nicefrac1q<1$ and $2-\varepsilon>1$.
Therefore, Assumption (A'2-ii) is satisfied with $\theta=1-\varepsilon$ and $K=1$.
Note that, for all continuous function $f : \mathbb N \to \mathbb R$ bounded by~$1$, we have, for all $q'\in(1,3]$
\[|Q_x\cdot f|^{q'} \leq |1-f(x)+f(x+1)|^{q'}\leq 3^{q'} \leq 27V(x),\]
since $1\leq V(x)$ for all $x\in\mathbb N$.
Therefore, since $Q^{\sss (i)}=R^{\sss (i)} = R = Q$ almost surely for all $i\geq 1$, Assumption (A'2-iii) holds with $A=27$. 
Using again that $V(x)\geq 1$ for all $x\in\mathbb N$, we have
\[|Q_x\cdot V| = 1 + (1-\varepsilon)V(x)\leq (2-\varepsilon)V(x),\]
and, for all $q\in(1,2]$,
\[|Q_x\cdot V^{\nicefrac1q}|^q 
\leq \big(1 + (1-\varepsilon)V(x)^{\nicefrac1q}\big)^q
\leq 2^q V(x),\]
since $2-\varepsilon<2$. 
Therefore, Assumption (A'2-iv) holds and so does (A'2); note that $p$ can be arbitrary in $(2,\infty)$, making $q$ arbitrary in $(1,2)$. Note that $q'$ is restricted to be in $(q,3]$.

One can check that the Markov chain of kernel $(R_x)_{x\in\mathbb N}$
is ergodic, with unique stationary distribution
$\nu_x = 2^{-x-1}$ ($\forall x\geq 0$). { By~\cite{MeynTweedie1993},
  we obtain the uniform convergence to $\nu$ required in Assumption
  (A3).}  Finally, (A4) holds since $E=\mathbb N$ is discrete.

Therefore, Theorem~\ref{thm:unbal-with-weights} applies and gives that
\begin{equation}\label{eq:cv_RRT}
n^{-1}\mathrm{Out}(\mathrm{RRT}_n) \to \nu \quad\text{weakly, almost surely when }n\to\infty.
\end{equation}
since $\nu R = \nu$.
Different versions of this result can be found in the literature: 
Bergeron, Flajolet \& Salvy~\cite[Corollary~4]{BFS92} 
prove it using generating functions,
Mahmoud \& Smythe~\cite{MahmoudSmythe92} prove a joint central limit theorem for the number of nodes of out-degree 0, 1 and 2,
Janson~\cite[Example~7.5]{Janson04} extends this result by considering out-degrees $0, 1, \ldots, M$ for all $M\geq 0$,
which implies~\eqref{eq:cv_RRT}. 
The approach of~\cite{MahmoudSmythe92} and~\cite{Janson04} relies on the remarkable fact that, in that particular example, 
one can reduce the problem to finitely many types.

Our main contribution for this example is to
prove the convergence in a stronger sense, and thus answer a question of Janson 
(see Remark~1.2~\cite{Janson03}).
Indeed, Theorem~\ref{thm:unbal-with-weights} also gives that, for all $q\in (1,2)$,
\[\sup_n \frac{\mathrm{Out}(\mathrm{RRT}_n)}{n} \cdot V^{\nicefrac1q} <+\infty,\]
since $P_x = \delta_x$ for all $x$, in this example.
Therefore, 

\begin{proposition}
For all $\varepsilon\in(0,\nicefrac12)$, for all $q\in(1,2)$,
for all functions $f\,:\, \mathbb N \to \mathbb R$ such that $f(x) = o\big((2-\varepsilon)^{\nicefrac x q}\big)$ 
when $x\to\infty$, we have
\[\frac1n\int f \,\mathrm d\mathrm{Out}(\mathrm{RRT}_n)
\to \sum_{x=0}^{\infty} 2^{-x-1} f(x), \text{ almost surely when }n\to\infty.\]
\end{proposition}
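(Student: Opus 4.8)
The plan is to combine the two conclusions of Theorem~\ref{thm:unbal-with-weights} that were recorded just above the statement: the weak almost-sure convergence $\tilde m_n := n^{-1}\mathrm{Out}(\mathrm{RRT}_n) \to \nu$ with $\nu_x = 2^{-x-1}$, and the almost-sure tightness bound $M := \sup_n \tilde m_n \cdot V^{\nicefrac1q} < \infty$, where $V(x) = (2-\varepsilon)^x$ and hence $V^{\nicefrac1q}(x) = (2-\varepsilon)^{\nicefrac xq}$ (here we use $P_x=\delta_x$, so $m_nP = m_n$). The whole argument takes place on the almost-sure event on which both statements hold; on that event $M$ is a fixed finite (random) constant and all estimates below are deterministic. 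First I would record that, since $q>1$ and $\varepsilon\in(0,\nicefrac12)$, we have $(2-\varepsilon)^{\nicefrac1q} < 2^{\phantom{1}}$, so that $C := \sum_{x\geq 0} 2^{-x-1}(2-\varepsilon)^{\nicefrac xq} = \nu\cdot V^{\nicefrac1q} < \infty$; together with $|f(x)| = O\big((2-\varepsilon)^{\nicefrac xq}\big)$ this also makes both $\sum_{x\geq 0} 2^{-x-1}f(x)$ and $\int f\,\mathrm d\tilde m_n = \sum_{x\geq 0} f(x)\tilde m_n(\{x\})$ absolutely convergent.

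Write $f(x) = \eta(x)(2-\varepsilon)^{\nicefrac xq}$ with $\eta(x)\to 0$, fix $\delta>0$, and choose $N$ so that $|\eta(x)|\leq\delta$ for all $x>N$. The core step is the decomposition
\[
\int f\,\mathrm d\tilde m_n - \sum_{x\geq 0} 2^{-x-1}f(x)
= \sum_{x=0}^N f(x)\big(\tilde m_n(\{x\}) - 2^{-x-1}\big)
+ \sum_{x>N} f(x)\tilde m_n(\{x\})
- \sum_{x>N} 2^{-x-1}f(x).
\]
For the first (finite) sum: since $E=\mathbb N$ is discrete, each indicator $\1_{\{x\}}$ is bounded and continuous, so weak convergence gives $\tilde m_n(\{x\})\to 2^{-x-1}$, whence this sum tends to $0$ as $n\to\infty$. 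For the two tails I would use $|f(x)|\leq \delta (2-\varepsilon)^{\nicefrac xq}$ on $x>N$, so that $\big|\sum_{x>N}f(x)\tilde m_n(\{x\})\big| \leq \delta\, \tilde m_n\cdot V^{\nicefrac1q}\leq \delta M$ uniformly in $n$, and $\big|\sum_{x>N}2^{-x-1}f(x)\big|\leq \delta C$. Letting $n\to\infty$ and then $\delta\downarrow 0$ yields $\limsup_n \big|\int f\,\mathrm d\tilde m_n - \sum_{x\geq 0} 2^{-x-1}f(x)\big| = 0$ on the almost-sure event, which is exactly the claimed convergence.

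There is no genuine obstacle here; the only point requiring a little care is that the tail bound must be \emph{uniform in $n$}, which is precisely what the almost-sure finiteness of $M = \sup_n \tilde m_n\cdot V^{\nicefrac1q}$ from Theorem~\ref{thm:unbal-with-weights} provides, and that the random constant $M$ is harmless because one fixes the almost-sure event (and hence $M$) before sending $\delta\to0$.
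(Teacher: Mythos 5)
Your proof is correct and follows exactly the route the paper intends: it combines the two conclusions of Theorem~\ref{thm:unbal-with-weights} recorded just before the proposition (weak almost-sure convergence to $\nu$ and the almost-sure bound $\sup_n n^{-1}\mathrm{Out}(\mathrm{RRT}_n)\cdot V^{\nicefrac1q}<\infty$) via the standard truncation/uniform-integrability argument. The paper states the proposition as an immediate consequence without spelling out this step, and your write-up simply supplies the details, handling the random constant and the uniformity in $n$ correctly.
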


Our approach also has the advantage of 
providing a framework that can be easily generalized, as, for example, in
the next application to which Janson's finitely-many-types approach wouldn't apply.

\vspace{\baselineskip}
\noindent{\bf Out-degree profile in a random recursive forest with multiple children.}
Let us now consider the following generalization of the random
recursive tree studied above.  The random recursive forest
$(\mathrm{RRF}_n)_{n\geq 1}$ with multiple children is defined as a
sequence of random rooted forests defined recursively as follows:
consider a probability measure $\alpha$ on $\{-1\}\cup\{1,2,\ldots\}$
(with $0<\alpha_{-1}<1$) and a probability measure $\beta$ on
$\{1,2,\ldots\}$;
\begin{itemize}
\item $\mathrm{RRF}_1$ has one node (the root);
\item we build $\mathrm{RRF}_{n+1}$ from $\mathrm{RRF}_n$ by choosing
  a node of $\mathrm{RRF}_n$ uniformly at random, and, if this node has at least one child,
  \begin{itemize}
  \item with probability $\alpha_{-1}$, remove the edge between the node
    and one of his children (hence forming an other tree in the forest),
  \item with probability $\alpha_k$ ($k\geq 1$), add $k$ children to this node,
  \end{itemize}
  while, if this node has $0$ child, with probability $\beta_k$ ($k\geq 1$), add $k$ children to this node.
\end{itemize}
We define $\mathrm{Out}(\mathrm{RRF}_n)$ as the sum of the out-degree profiles (see Definition~\ref{df:profile}) of the trees composing the forest $\mathrm{RRF}_n$.

\begin{proposition}
  \label{prop:RRF}
  Assume that $\alpha$ and $\beta$ both admit an exponential moment of
  order $\lambda$, for some fixed $\lambda>0$. There exists a
  probability distribution $\nu_{QSD}$ such that, for all $q\in(1,2)$,
  for all $a>0$ satisfying
  \[
    \sum_{k=1}^{+\infty} \alpha_k e^{ak} < 2 \sum_{k=1}^\infty \alpha_k,
  \]
  and for all function $f\,:\,E=\mathbb N\to \mathbb R$ such that
  $f(x) = o(e^{ax/q})$ when $x\to\infty$, we have
\begin{equation}\label{eq:cv_RRT_mult}
\int f \, \frac{\mathrm d\mathrm{Out}(\mathrm{RRF}_n)}{\mathrm{Out}(\mathrm{RRF}_n)(E)} 
\to \int f \,\mathrm d\nu_{QSD}, \text{ almost surely when }n\to\infty.
\end{equation}
\end{proposition}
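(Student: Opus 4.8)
The strategy is to recognise $(\mathrm{Out}(\mathrm{RRF}_n))_{n\geq 1}$ as a measure-valued P\'olya process on $E=\mathbb N$ and to verify Assumptions~(T, A1, A'2, A3, A4) so that the signed-kernel version of Theorem~\ref{thm:unbal-with-weights} applies. First I would write down the replacement kernel: picking a node of out-degree $x$ and applying the rule changes the out-degree profile by $-\delta_x + \delta_{x-1} + \delta_0 + \delta_1$ with probability $\alpha_{-1}$ (edge removal: the chosen node loses a child, one of its children becomes a new root of out-degree $0$, and the former child keeps its out-degree — here one must be careful about bookkeeping of who gains the extra unit, but in any case the net change is a fixed signed measure depending on $x$), and $-\delta_x + \delta_{x+k} + k\delta_0$ with probability $\alpha_k$ for $k\geq 1$ when $x\geq 1$, while for $x=0$ it is $-\delta_0 + \delta_k + k\delta_0$ with probability $\beta_k$. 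Thus $R_x$ is the corresponding (deterministic) signed measure for each $x$, $R^{\sss(i)}=R=Q$ for all $i$ since $P_x=\delta_x$ (no weights), and one checks $R_x(E)=1$ for all $x$, so $Q_x(E)=1$ and Assumption~(A1) holds with $\mu=\delta_1$, $c_1=1$. Tenability~(T) holds by construction since $\mathrm{Out}(\mathrm{RRF}_n)$ is a genuine profile, hence a positive measure. Assumption~(A4) is trivial because $E$ is discrete.

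The heart of the verification is Assumption~(A'2). I would take $V(x)=e^{ax}$ for a fixed $a>0$ satisfying the hypothesis $\sum_{k\geq 1}\alpha_k e^{ak} < 2\sum_{k\geq 1}\alpha_k$; the exponential-moment hypotheses on $\alpha$ and $\beta$ guarantee such $a$ exists and that all the sums below converge. Computing $Q_x\cdot V$ for $x\geq 1$ gives, up to bounded correction terms coming from the $\delta_0,\delta_1$ contributions,
\[
Q_x\cdot V = \Big(-1 + \alpha_{-1}e^{-a} + \sum_{k\geq 1}\alpha_k e^{ak}\Big)e^{ax} + (\text{bounded}),
\]
and the chosen range of $a$ makes the bracket strictly less than $1$ (using $\sum_{k\geq 1}\alpha_k = 1-\alpha_{-1}$ and $\alpha_{-1}e^{-a}<\alpha_{-1}$), so $Q_x\cdot V \leq \theta V(x)+K$ for some $\theta\in(0,1)=\ (0,c_1)$ and $K<\infty$ absorbing the finitely problematic small $x$ and the bounded terms. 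The same computation with $V^{1/q}=e^{ax/q}$ works because $t\mapsto t^{1/q}$ is subadditive and $e^{a/q}<e^a$, giving (A'2-ii). For (A'2-iii)–(iv): since $Q_x(E)=1$ and $Q=Q^{\sss(i)}$ is deterministic, the variance-type terms $\mathbb E|R_x^{\sss(i)}\cdot f - R_x\cdot f|^r$ and $\mathbb E|Q_x^{\sss(i)}\cdot f - Q_x\cdot f|^p$ vanish, $|Q_x\cdot f|^{q'}$ is bounded by a constant (the total variation of $R_x$ is finite and controlled, using the moment hypothesis) hence $\leq A V(x)$ as $V\geq 1$, and $|Q_x\cdot V|\vee|Q_x\cdot V^{1/q}|^q \leq B V(x)$ follows from (A'2-ii) exactly as in the proof of Lemma~\ref{lem:A->A'}. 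One checks $p$ may be taken arbitrarily large (so $q$ arbitrary in $(1,2)$) since no moment constraint from randomness is present, while $q'\in(q,\infty)$ is likewise free here.

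For Assumption~(A3) I would argue that the continuous-time pure-jump process $X$ with generator $Q-I$ on $\mathbb N\cup\{\partial\}$ is irreducible and satisfies the Lyapunov condition (A'2-ii) with $V^{1/q}$; invoking Champagnat \& Villemonais~\cite{ChampagnatVillemonais2017} (Theorem~5.1 and Remark~11 there, as in the quasi-ergodic example above) yields a unique quasi-stationary distribution $\nu_{QSD}$ together with uniform exponential convergence in total variation on $\{\alpha : \alpha\cdot V^{1/q}\leq C\}$, which is exactly (A3). The main obstacle I anticipate is precisely the bookkeeping in writing down $R_x$ correctly for the edge-removal move — one must track which node's out-degree actually changes and make sure the resulting signed measure is consistent with the requirement that $Q_x$ restricted to $E\setminus\{x\}$ be a positive measure and $Q_x(\{x\})\in\mathbb R$; the rest is a routine Lyapunov computation modelled on the random-recursive-tree example. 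Once (T, A1, A'2, A3, A4) are in place, Theorem~\ref{thm:unbal-with-weights} gives $\mathrm{Out}(\mathrm{RRF}_n)/\mathrm{Out}(\mathrm{RRF}_n)(E)\to \nu_{QSD}$ weakly a.s.\ (note $\nu_{QSD}R$ is proportional to $\nu_{QSD}$ since $\nu_{QSD}(Q-I)=-\lambda_0\nu_{QSD}$ and $R=Q$), together with $\sup_n \mathrm{Out}(\mathrm{RRF}_n)(E)^{-1}\,\mathrm{Out}(\mathrm{RRF}_n)\cdot V^{1/q}<\infty$ a.s.; combining weak convergence with this uniform bound upgrades, as in the preceding proposition, to convergence of $\int f\,\mathrm d(\mathrm{Out}(\mathrm{RRF}_n)/\mathrm{Out}(\mathrm{RRF}_n)(E))$ for every $f$ with $f(x)=o(e^{ax/q})$, which is~\eqref{eq:cv_RRT_mult}.
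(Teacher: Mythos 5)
Your overall strategy (cast $\mathrm{Out}(\mathrm{RRF}_n)$ as an MVPP with signed kernels, verify (T, A1, A'2, A3, A4) with $V(x)=e^{ax}$, get (A3) from Champagnat \& Villemonais, then upgrade to unbounded test functions via the $V^{\nicefrac1q}$ bound) is exactly the paper's route, but the execution has genuine gaps. First, the replacement kernel is \emph{random}, not deterministic: at each step the forest either loses an edge (with probability $\alpha_{-1}$) or gains $k$ children (with probability $\alpha_k$, resp.\ $\beta_k$ at a leaf), so $R^{\sss(i)}_x$ is the random measure taking value $-\delta_x+\delta_{x-1}$ or $-\delta_x+k\delta_0+\delta_{x+k}$, and only its \emph{mean} is the signed measure you call $R_x$. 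Your claim that $R^{\sss(i)}=R=Q$ and that the terms $\E|R_x^{\sss(i)}\cdot f-R_x\cdot f|^r$, $\E|Q_x^{\sss(i)}\cdot f-Q_x\cdot f|^p$, $\E|Q_x^{\sss(i)}\cdot V^{\nicefrac1q}-Q_x\cdot V^{\nicefrac1q}|^r$ vanish is therefore false; bounding these fluctuation terms is where the exponential-moment hypotheses on $\alpha$ and $\beta$ are actually used, and it is a substantial part of the paper's verification of (A'2-iii)--(iv). Second, the process is \emph{not} balanced: the added mass is $0$ on an edge-removal and $k$ on a $k$-child addition, so $R_x(E)$ is random with mean $\sum_k k\alpha_k$ (resp.\ $\sum_k k\beta_k$), not $1$. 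In particular (A1) fails as you state it ($Q_x(E)\le 1$ need not hold, and $Q_x^{\sss(i)}(E)=0$ with probability $\alpha_{-1}>0$, so it cannot dominate $\delta_1$); the paper must first renormalise by $M=M_\alpha\vee M_\beta$ and then take $\mu=\alpha_{-1}\delta_0+(\sum_{k\ge1}\alpha_k)\delta_{\nicefrac1M}$, giving $c_1=\nicefrac{\sum_{k\ge1}\alpha_k}{M}$; the condition $\sum_k\alpha_k e^{ak}<2\sum_k\alpha_k$ is precisely what makes the Lyapunov drift $\theta$ fall below this $c_1$, not below $1$.

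Third, the bookkeeping of the edge-removal move is wrong: removing the edge between the chosen node (out-degree $x$) and one of its children changes the profile by $-\delta_x+\delta_{x-1}$ only (the detached child keeps its out-degree as the root of a new tree); your $-\delta_x+\delta_{x-1}+\delta_0+\delta_1$ adds spurious mass, and this error feeds directly into the (false) balance claim $R_x(E)=1$. With the kernel written correctly, the randomness and the unbalancedness are unavoidable, and the proof must follow the paper's more laborious path: rescale by $M$, verify (A1) with the two-point measure $\mu$, check (A'2-ii) with $\theta=\frac{\sum_k\alpha_k e^{ak}-\sum_k\alpha_k}{\sum_k\alpha_k}c_1$, and bound all the $L^r$/$L^p$ fluctuation terms in (A'2-iii)--(iv) using the exponential moments of $\alpha$ and $\beta$. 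The concluding steps you sketch (A3 via~\cite{ChampagnatVillemonais2017}, $\nu_{QSD}R\propto\nu_{QSD}$, and the upgrade to $f(x)=o(e^{ax/q})$ via $\sup_n\tilde m_n\cdot V^{\nicefrac1q}<\infty$) do match the paper once the assumptions are actually established.
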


\begin{proof}
It is straightforward to see that the sequence
$(\mathrm{Out}(\mathrm{RRF}_n))_{n\geq 1}$ of the out-degree profile
of the random recursive forest is a MVPP on $\mathbb N$ of initial
composition $m_0=\delta_0$, and random replacement kernel given, for
all $x\geq 1$ by
\[
  R^{(i)}_x =
  \begin{cases}
    -\delta_x+\delta_{x-1}&\text{ with probability }\alpha_{-1}\\
    -\delta_x+k\,\delta_0+\delta_{x+k}&\text{ with probability }\alpha_k, \text{ for all }k\geq 1,\\
  \end{cases}
\]
and
\[R^{(i)}_0 = (k-1)\delta_0+\delta_k\text{ with probability }\beta_k, \text{ for all } k\geq 1.
\]
In particular, for all $x\geq 1$,
\[R_x=-\delta_x+\sum_{k=1}^\infty k \alpha_k\delta_0+\alpha_{-1}\delta_{x-1}+\sum_{k=1}^\infty
\alpha_k\delta_{x+k},\]
and
\[R_0=\sum_{k=1}^\infty (k-1)\beta_k \,\delta_0+\sum_{k=1}^\infty \beta_k\delta_k.
\]
We deduce that, for all $x\geq 1$,
$R_x(E)=M_\alpha:=\sum_{k\in\N\cup\{-1\}} |k| \alpha_k$ (the first absolute
moment of $\alpha$) and $R_0(E)=M_\beta:=\sum_{k\in\N} k\,\beta_k$ (the mean of
$\beta$). From now on, we consider the MVPP $m_n^M$ with replacement
kernel $\bar R^{\sss (i)}_x:=\frac{1}{M}R^{\sss (i)}_x$, 
where $M=M_\alpha\vee M_\beta$. 
Although the replacement measures $\bar R^{\sss (i)}$ are not
positive, the process satisfies Assumption~(T) by definition and thus
this MVPP falls into the framework of Section~\ref{sec:remball}, 
with weight kernel $\bar P_x=\delta_x$ and
$\bar Q^{\sss (i)}_x = \frac{1}{M}R^{\sss (i)}_x$ for all $x\geq 0$.

For any fixed $p>2$ and $q=\frac{p}{p-1}\in(1,2)$,
we have, for all $i\geq 1$, for all $x\geq 1$, 
\[\bar Q_x^{\sss (i)}(E)
= \begin{cases}
\frac k M & \text{ with probability }\alpha_k, \text{ for all }k\geq 1\\
0 &\text{ with probability }\alpha_{-1},
\end{cases}\]
and $\bar Q_0^{\sss (i)}(E) = \nicefrac{k}{M}$ with probability $\beta_k$ for all $k\geq 1$.
Thus, if we set
  \[\mu=\alpha_{-1}\,\delta_0+\left(\sum_{k=1}^\infty \alpha_k\right)\,\delta_{\nicefrac 1 M},\]
we get that $c_1 = \int x\,\mathrm d \mu(x)=\nicefrac{\sum_{k\geq 1} \alpha_k}{M} > 0$, and thus Assumption (A1) holds.

Let us
now check that (A'2) holds with $V(x)=\mathrm e^{ax}$, 
where $a\in(0,\lambda)$ satisfies
  \[
    \sum_{k=1}^{+\infty} \alpha_k e^{ak} < 2 \sum_{k=1}^\infty \alpha_k.
  \]
Assumption (A'2-i) is straightforward. 
Moreover, we
have, for all $x\geq 1$,
\begin{align*}
  \bar Q_x\cdot V
  &=\frac{-1}{M} V(x)+\frac{\sum_{k=1}^\infty k\alpha_k}{M}V(0)+\frac{1}{M}\alpha_{-1}V(x-1) +\frac{1}{M}\sum_{k=1}^{+\infty} \alpha_k V(x+k)\\
  &\leq \frac{1}{M}\left(-1+\alpha_{-1}\mathrm e^{-a}+\sum_{k=1}^{+\infty} \alpha_k \mathrm e^{ak}\right)V(x)+1\\
   &\leq \frac{\sum_{k\geq 1} \alpha_k e^{ak}-\sum_{k\geq 1} \alpha_k}{\sum_{k\geq 1}\alpha_k}c_1 V(x)+1,
\end{align*}
where
  $\frac{\sum_{k\geq 1} \alpha_k e^{ak}-\sum_{k\geq 1}
    \alpha_k}{\sum_{k\geq 1}\alpha_k}<1$ by assumption.  Similarly,
\begin{align*}
  \bar Q_x\cdot V^{\nicefrac1q}
  &\leq \frac{1}{M}\left(-1+\alpha_{-1}\mathrm e^{-\nicefrac a q}
  +\sum_{k=1}^{+\infty} \alpha_k \mathrm e^{\nicefrac{ak}q}\right)V^{\nicefrac1q}(x)+1\\
  &\leq \frac{\sum_{k\geq 1} \alpha_k e^{ak}-\sum_{k\geq 1} \alpha_k}{\sum_{k\geq 1}\alpha_k}c_1 V^{\nicefrac1q}(x)+1,
\end{align*}
so that (A'2-ii) is satisfied with $\theta=\frac{\sum_{k\geq 1} \alpha_k e^{ak}-\sum_{k\geq 1} \alpha_k}{\sum_{k\geq 1}\alpha_k}c_1\in(0,c_1)$
and $K=1\vee \bar Q_0\cdot V$. 
For all $x\geq 1$, for all $q'>1$, and
for all function $f : \mathbb N\to\mathbb R$ 
continuous and bounded by~$1$, we have
\begin{equation}\label{eq:RRF1}
|\bar Q_x\cdot f|^{q'} = \frac1{M^{q'}}
\left|-f(x)+\alpha_{-1}f(x-1) + \Big(\sum_{k\geq 1}k\alpha_k\Big)f(0) + \sum_{k\geq 1}\alpha_k f(x+k)\right|^{q'}
\leq
A_1^{q'}V(x),
\end{equation}
where $A_1 = 1\vee ({3+M_{\alpha}}/{M})$, since $V(x)\geq 1$ for all $x\geq 0$;
we also have
\begin{equation}\label{eq:RRF2}
|\bar Q_0\cdot f|^{q'} = \frac1{M^{q'}}\left|\Big(\sum_{k\geq 1}(k-1)\beta_k\Big)f(0) + \sum_{k\geq 1}\beta_k f(k)\right|\leq \left(\frac{M_\beta}{M}\right)^{q'} \leq 1 \leq A_1V(0),
\end{equation}
since $V(0)\geq 1$ and $A_1\geq 1$ by definition.
We also have that, for all $r>1$,
\begin{align*}
\E\big[\big|\bar R^{\sss (i)}_{x} \cdot f - \bar R_{x}\cdot f\big|^r\big]
&\leq \mathbb P(|\bar R^{\sss (i)}_{x} \cdot f - \bar R_{x}\cdot f |\leq 1)
+ 2^{r-1}\E\big[|\bar R^{\sss (i)}_{x} \cdot f|^r +|\bar R_{x} \cdot f|^r\big]\\
&\leq 1+ 2^{r-1}\E\big[|\bar R^{\sss (i)}_{x} \cdot f|^r\big]+2^{r-1}A_1^r V(x),
\end{align*}
because of Equations~\eqref{eq:RRF1} and~\eqref{eq:RRF2} applied to the special case~$q'=r$. Note that
\begin{align*}
\E\big[|\bar R^{\sss (i)}_{x} \cdot f|^r\big]
&=\frac{\alpha_{-1}}{M}\big|-f(x)+f(x-1)\big|^r
+\sum_{k\geq 1}\frac{\alpha_k}{M}\big|-f(x)+kf(0)+f(x+k)\big|^r\\
&\leq \frac{2^r\alpha_{-1} + \sum_{k\geq 1}(2+k)^r\alpha_k}{M}
=: A_{2,r} <+\infty,
\end{align*}
since $\alpha$ admits an exponential moment, and therefore has finite polynomial moments.
Therefore, using again that~$V$ is bounded from below by 1, we get that
\[\E\big[\big|\bar R^{\sss (i)}_{x} \cdot f - \bar R_{x}\cdot f\big|^r\big]
\leq (1+2^{r-1}A_1+2^{r-1}A_{2,r})V(x),
\]
for all $x\geq 1$. A similar reasoning, using that $\beta$ also has exponential moments, implies that
\[\E\big[\big|\bar R^{\sss (i)}_{0} \cdot f - \bar R_{0}\cdot f\big|^r\big]
\leq (1+2^{r-1}A_1+2^{r-1}A_{3,r})V(0),\]
where $A_3 = \sum_{k\geq 1}\beta_k k^r$. Since $\bar R^{\sss (i)}=\bar Q^{\sss (i)}$ almost surely, we obtain
 \[\E\big[\big|\bar Q^{\sss (i)}_{0} \cdot f - \bar Q_{0}\cdot f\big|^p\big]
\leq (1+2^{p-1}A_1+2^{p-1}A_{3,p})V(0).\]
Therefore, setting $A = 1+2^{p-1}A_1+2^{p-1}(A_{2,r} \vee A_{3,r} \vee A_{3,p})$, we can conclude that
Assumption (A'2-iii) holds.

Finally, let us check Assumption (A'2-iv): 
for all $x\geq 1$, for all $\ell>1$ and $s\leq 2$, we have
\begin{align}
  \big|\bar Q_x\cdot V^{\nicefrac1\ell}\big|^s
  &=\frac1{M^s}
  \left|-V(x)^{\nicefrac1\ell}+V(0)^{\nicefrac1\ell}
  +\alpha_{-1} V(x-1)^{\nicefrac1\ell}
  +\sum_{k=1}^{+\infty}\alpha_k V(x+k)^{\nicefrac1\ell}\right|^s\notag\\
  &\leq \frac1{M^s}
  \left(2+\alpha_{-1}+\sum_{k=1}^{+\infty}\alpha_k \mathrm e^{\lambda k/\ell}\right)^s V(x)^{\nicefrac{s}\ell}\notag\\
  &\leq \left(\frac{3+\sum_{k\geq 1} \alpha_k \mathrm e^{\lambda k}}{M}\right)^s V(x)^{\nicefrac{s}\ell} ,\label{eq:RRF3}
\end{align}
and, for all $r\in(1,q)$,
\begin{align}
  \E\left[\left|\bar Q^{\sss (i)}_x\cdot V^{\nicefrac1q}\right|^r\right]
  &=\frac1{M^r}\left(\alpha_{-1} \left|-\mathrm e^{ax/q}+1+\mathrm e^{a(x-1)/q}\right|^r
  +\sum_{k=1}^{+\infty} \alpha_k\left|-\mathrm e^{ax/q}+k+\mathrm e^{a(x+k)/q}\right|^r\right)\notag\\
   &\leq \frac{V(x)^{\nicefrac1q}}{M^r} \left(\alpha_{-1} 3^r +
   3^{r-1} \sum_{k\geq 1} 
   \alpha_k \big(1+k^q+\mathrm e^{ak}\big)\right)\notag\\
   &\leq B_1 V(x)^{\nicefrac1q} ,
   \label{eq:RRF4}
\end{align}
where
$B_1 = 3^2 \big(\alpha_{-1} + \sum_{k\geq 1} \alpha_k (1+k^2+\mathrm
e^{\lambda k})/M<+\infty$. Similar calculations hold for $x=0$; we
thus now reason as if Equations~\eqref{eq:RRF3} and~\eqref{eq:RRF4}
also hold for $x=0$.  Applying Equation~\eqref{eq:RRF3} to
$\ell = s = 1$ gives that $|Q_x\cdot V|\leq B_2 V(x)$ for all
$x\geq 0$, where $B_2 = (3+\sum_{k\geq 1} \alpha_k \mathrm e^{\lambda k})/M$. Applying
Equation~\eqref{eq:RRF3} to $\ell = s = q$ gives that
$|Q_x\cdot V^{\nicefrac1q}|^q\leq B_3 V(x)^{\nicefrac1q}$ for all
$x\geq 0$, where $B_3 = ((2+\sum_{k\geq 1} \alpha_k \mathrm e^{\lambda k})/M)^q$. Finally, applying
Equation~\eqref{eq:RRF3} to $\ell=q$ and $s=r$, and using
Equation~\eqref{eq:RRF4}, we get that
\begin{align*}
\E\left[\left|\bar Q^{\sss (i)}_x\cdot V^{\nicefrac1q}-\bar Q_x\cdot V^{\nicefrac1q}\right|^r\right]
&\leq 2^{r-1} \left(\E\big[\big|\bar Q^{\sss (i)}_x\cdot V^{\nicefrac1q}\big|^r\big]
+\E\big[\big|\bar Q_x\cdot V^{\nicefrac1q}\big|^r\big]\right)\\
&\leq 2^{r-1} \big(B_1V(x)^{\nicefrac{r}{q}} +({2+\sum_{k\geq 1} \alpha_k \mathrm e^{\lambda k}}/{M})^r V(x)^{\nicefrac{r}{q}}\big)\\
&\leq B_4 V(x),
\end{align*}
with
$B_4 = 2^{r-1} \big(B_1 +({2+\sum_{k\geq 1} \alpha_k \mathrm
  e^{\lambda k}}/{M})^r\big)$, because $\nicefrac{r}{q}<1$, and
$V(x)\geq 1$ for all $x\geq 0$.  Therefore, taking
$B = B_2\vee B_3\vee B_4$, we conclude that Assumption (A'2-iv) holds.

\medskip
The continuous-time pure jump Markov process~$X$ with
sub-Markovian jump matrix $Q-I$ is irreducible and clearly satisfies the
assumptions of Theorem~5.1 and Remark~11 in~\cite{ChampagnatVillemonais2017}.
Therefore, there exist a quasi-stationary distribution
$\nu_{QSD}$ for $X$ { and two positive constants $\textrm{Cst},\delta>0$ such that, for all probability measure $\alpha\in E$ satisfying $\alpha\cdot V^{\nicefrac1q}<+\infty$, for all $t\geq 0$,
\begin{align*}
  \|\P_\alpha(X_t\in \cdot\mid t<\tau_\d) - \nu_{QSD}\|_{\sss TV}\leq \textrm{Cst}\,\alpha\cdot V^{\nicefrac1q}\,\mathrm e^{-\delta t},
\end{align*}}
which entails Assumption~(A3). Since Assumption~(A4) is clearly satisfied,
Theorem~\ref{thm:unbal-with-weights} applies and hence 
\begin{equation}
\frac{\mathrm{Out}(\mathrm{RRF}_n)}{\mathrm{Out}(\mathrm{RRF}_n)(E)} \to \frac{\nu_{QSD} R}{\nu_{QSD} R(E)} \quad\text{weakly, almost surely when }n\to\infty.
\end{equation}
Since $\nu_{QSD}R$ is proportional to $\nu_{QSD}$, 
and since we also have, again by Theorem~\ref{thm:unbal-with-weights}, 
\[\sup_n \frac{\mathrm{Out}(\mathrm{RRF}_n)}{\mathrm{Out}(\mathrm{RRF}_n)(E)}\cdot V^{\nicefrac1q} <+\infty,\]
this concludes the proof of Proposition~\ref{prop:RRF}.
\end{proof}

\subsubsection{Protected nodes}

A node $\nu$ of a tree $\tau$ is 2-protected if the closest leaf is at distance at least $2$ from $\nu$;
in a social network, 2-protected nodes can be users who used to invite new users to the network but have not done so recently.
The proportion of such nodes in different models of random trees have been studied in the literature: 
Motzkin trees in Cheon \& Shapiro~\cite{CS08},
random binary search tree in B\'ona~\cite{Bona14},
and more recently in the $m$-ary search tree in Holmgren, Janson \& \v{S}ileikis~\cite{HJS16}.
Devroye \& Janson~\cite{DJ14} show how results of Aldous~\cite{Aldous91} about fringe trees can be used to study this question 
with a unified approach for different models of random trees, including simply generating trees and the random recursive tree.
We show here how our main result allows to get information about protected nodes in random trees.

\vspace{\baselineskip}
\noindent{\bf Protected nodes in the random recursive tree.}
For all $n\geq 1$ and $x\geq 0$, 
let us denote by $X_{n,x}$ the number of internal nodes in $\mathrm{RRT}_n$ having exactly $x$ leaf-children.
The random measure 
\[m_n = \sum_{x\in\mathbb N} X_{n,x}\delta_x\]
is a MVPP of initial composition $m_0 = \delta_1$. 
The replacement kernel of $(m_n)_{n\geq 0}$ is (for all $i\geq 1$ and $x\geq 1$)
\[
R_0^{\sss (i)} = -\delta_0 + \delta_1\quad\text{ and }\quad 
R_x^{\sss (i)} = B^{\sss (i)}_{\nicefrac1{x+1}}\delta_{x+1} + \big(1-B^{\sss (i)}_{\nicefrac1{x+1}}\big)(\delta_{x-1}+\delta_1)-\delta_x,
\]
where $\big(B^{\sss (i)}_{\nicefrac1{x+1}}\big)$ is a sequence of i.i.d.\ random 
Bernoulli-distributed variables of parameters $\nicefrac1{x+1}$ for all $x\geq 1$.
The weight kernel of $(m_n)_{n\geq 0}$ is $P_x= (x+1)\delta_x$ (for all $x\in\mathbb N$).
We therefore have
\[
R_0 = -\delta_0 + \delta_1\quad\text{ and }\quad 
R_x = \frac{1}{x+1}\,\delta_{x+1} + \frac{x}{x+1}(\delta_{x-1}+\delta_1)-\delta_x,
\]
and
\[
Q_x = \frac{x+2}{x+1}\,\delta_{x+1} + \frac{x}{x+1}(x\delta_{x-1}+2\delta_1)-(x+1)\delta_x,
\]
for all $x\geq 0$.
Note that $Q_x(\mathbb N) = 1$ for all $x\geq 0$.
Let us check the assumptions of Theorem~\ref{thm:unbal-with-weights}; (T) is satisfied by construction of the model, (A1) is satisfied with $\mu = \delta_1$ and thus $c_1 = 1$. 
Fix $\varepsilon>0$, $V(0)=V(1)=1$, and $V(x) = \prod_{i=2}^x (i-\varepsilon)$ for all $x\geq 2$; (A'2-i) is clearly satisfied, and for all $x\in \mathbb N$,
\begin{align*}
Q_x\cdot V
&= \frac{x+2}{x+1}V(x+1) + \frac{x}{x+1}\big(xV(x-1)+2V(1)\big)-(x+1)V(x)\\
&=V(x)\bigg(\frac{x+2}{x+1}(x+1-\varepsilon)+\frac{x^2}{(x+1)(x-\varepsilon)} + \frac{2x}{x+1}\frac{1}{V(x)} - x-1 \bigg).
\end{align*}
Note that, when $x\to\infty$,
\[\frac{x+2}{x+1}(x+1-\varepsilon)+\frac{x^2}{(x+1)(x-\varepsilon)} + \frac{2x}{x+1}\frac{1}{V(x)} - x-1
= 1-\varepsilon + o(1),\]
implying that there exists $x_0$ such that, for all $x\geq x_0$,
$Q_x\cdot V\leq 1-\nicefrac{\varepsilon}{2}$, and thus, for all $x\geq 0$,
\[Q_x\cdot V \leq (1-\nicefrac\varepsilon2)V(x) + \sup_{z\leq x_0} Q_z\cdot V.\]
The same reasoning gives that, for all $p> 2$, $q=p/(p-1)\in(1,2)$,
\begin{align*}
Q_x\cdot V^{\nicefrac1q}
&=V(x)^{\nicefrac1q}\bigg(\frac{x+2}{x+1}(x+1-\varepsilon)^{\nicefrac1q}+\frac{x^2}{(x+1)(x-\varepsilon)^{\nicefrac1q}} + \frac{2x}{x+1}\frac{1}{V(x)^{\nicefrac1q}} - x-1 \bigg)\\
&=V(x)^{\nicefrac1q}(x^{\nicefrac1q}+x^{1-\nicefrac1q}-x + \mathcal O(1))
=-V(x)^{\nicefrac1q}(x+o(x)),
\end{align*}
and there exists $x_1$ such that for all $z\geq x_1$, $Q_x\cdot V^{\nicefrac1q}\leq 0$.
Thus, (A'2-ii) is satisfied with $\theta = 1-\nicefrac{\varepsilon}{2}$ and 
$K = \sup_{z\leq x_0} Q_z\cdot V + \sup_{z\leq x_1} Q_z\cdot V^{\nicefrac1q}$.
Let $f$ be a function from $\{0, 1, \ldots\}$ to $\mathbb R$ continuous and bounded by~$1$, 
and $r\in(1,2)$; we have
\begin{align*}
|Q_x\cdot f|^r
&= \Big|\frac{x+2}{x+1}f(x+1)+\frac{x^2}{x+1}f(x-1) + \frac{2x}{x+1}f(1) - (x+1)f(x)\Big|^r\\
&\leq 4^{r-1}\left(\Big(\frac{x+2}{x+1}\Big)^r+\Big(\frac{x^2}{x+1}\Big)^r+\Big(\frac{2x}{x+1}\Big)^r + (x+1)^r\right).
\end{align*}
When $x\to\infty$, we have
\[\Big(\frac{x+2}{x+1}\Big)^r+\Big(\frac{x^2}{x+1}\Big)^r+\Big(\frac{2x}{x+1}\Big)^r + (x+1)^r
  = (2+o(1)) x^r.\] Note that, when $x\to\infty$,
$x^r = o(x^2) = o\big(V(x)\big)$, which implies that there exists a
constant $A$ such that, for all $x\geq 0$,
$|Q_x\cdot f|^r\leq AV(x)$. One can check that,
$R^{\sss (i)}_0 = R_0$, and, for all $i\geq 1$,
\[|R_x^{\sss (i)}\cdot f-R_x\cdot f|^r\leq 3,\]
because a Bernoulli random variable is at most at distance~$1$ from its mean, almost surely. We also have
\begin{align*}
&\mathbb E|Q_x^{\sss (i)}\cdot f-Q_x\cdot f|^p\\
&=\left|(x+2)f(x+1)\Big(B^{\sss (i)}_{\nicefrac{1}{x+1}}-\frac1{x+1}\Big) 
+ x\big(xf(x-1)+2f(1)\big)\Big(\frac{1}{x+1}-B^{\sss (i)}_{\nicefrac{1}{x+1}}\Big)\right|^p\\
&=\Big((x+2)^r + x^r(x+2)^r\Big)\leq AV(x),
\end{align*}
for $A$ large enough, since $x^{2r} = o\big(V(x)\big)$ when $x\to\infty$.
We have thus checked that (A'2-iii) holds. 
Assumption (A'2-iv) can be checked in the same way; we leave the details to the reader. Note that $p> 2$, and thus $q\in(1,2)$ are arbitrary.

Set
\[\nu_0 = \frac{\mathrm e-2}{1+2\mathrm e}, \quad \nu_1 = \frac{4(\e-2)}{1+2\e},
  \quad\text{ and }\quad \nu_i = \frac{2(i+1)}{1+2\e}\sum_{j\geq
    i+1}\frac1{j!}, \quad(\forall i\geq 2).\] One can check that the
Markov process with jump measure $Q-I$ is ergodic, that
$\nu = (\nu_i)_{i\geq 0}$ is the unique stationary distribution of
$Q-I$. { Using (A2) and~\cite{MeynTweedie1993}, we get that (A3) is satisfied.}
Therefore, our main result
applies ((A4) is immediate since $E = \mathbb N$ is discrete) and we
get that $\tilde m_n$ converges almost surely to
$\pi := \nu R/\nu R(\mathbb N)$.  Let us denote by $\hat \pi = \nu R$;
it is straightforward to check that
\[\hat\pi_0 = \frac{\e - 2}{1+2\e}, \quad \hat\pi_1 = \frac{2\e - 4}{1+2\e}, \quad\text{ and }\quad
\hat \pi_x = \frac2{1+2\e}\sum_{i\geq x+1}\frac1{i!},\]
and thus that $\nu R(\mathbb N) = \e/(1+2\e)$, implying that
\[\pi_0 = 1-\frac2{\e}, \quad \pi_1 = 2-\frac4{\e}, \quad\text{ and }\quad
\pi_x= \frac2{\e}\sum_{i\geq x+1} \frac1{i!}.\]
We have thus proved the following:
\begin{proposition}
For all $x\geq 1$, 
the proportion $p_{n,x}$ of internal nodes having exactly~$x$ 
leaf-children in the $n$-node random recursive tree
converges almost surely to
\[\frac2{\e}\sum_{i\geq x+1} \frac1{i!}.\]
The proportion $p_{n,0}$ of protected internal nodes converges almost
surely to $1-\nicefrac2\e$.  Moreover, for all $q\in(1,2)$ and all function
$f:\{0,1, \ldots\}\to\mathbb R$ such that
$f(x) = o\big(\prod_{i=2}^x (i-\varepsilon)^{\nicefrac1q}\big)$ for
some $\varepsilon>0$ when $x\to\infty$, we have
\[\sum_{i\geq 0} p_{n,i}f(i) \to (1-\nicefrac2\e) f(0)  + \frac2{\e}\sum_{i\geq 1} f(i)\sum_{j\geq i+1} \frac1{j!}\]
almost surely when $n\to\infty$.
\end{proposition}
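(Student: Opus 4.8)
\emph{Proof sketch.} The plan is to view $m_n=\sum_{x\ge 0}X_{n,x}\delta_x$ as a weighted, signed-replacement MVPP and to apply Theorem~\ref{thm:unbal-with-weights} through its signed-kernel version from Section~\ref{sec:remball}. The first step is to justify the dynamics announced above: picking a node of $\RRT_n$ uniformly at random amounts to picking, among the $\sum_x (x+1)X_{n,x}$ tokens attached with weight $x+1$ to each internal node of color $x$ (the node itself plus its $x$ leaf-children, which are exactly the vertices whose selection changes the count at $x$), one token; this is the weight kernel $P_x=(x+1)\delta_x$. The effect on the profile of attaching a new child to the selected vertex, together with the Bernoulli variable $B^{\sss (i)}_{\nicefrac1{x+1}}$ deciding whether that vertex is the recorded internal node or one of its leaf-children, yields the stated $R^{\sss (i)}_x$ and hence $Q^{\sss (i)}_x=R^{\sss (i)}_x P$. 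One checks directly that $Q^{\sss (i)}_x(\N)=1$ for every $x$, so (A1) holds with $\mu=\delta_1$ and $c_1=1$, and tenability (T) is automatic since $m_n$ always has non-negative integer entries; the initial conditions $m_0\cdot V<\infty$ and $m_0P\cdot V<\infty$ hold trivially since $m_0=\delta_1$ is finitely supported.

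The heart of the matter is the verification of (A'2) with the ``sub-factorial'' Lyapunov function $V(x)=\prod_{i=2}^x(i-\varepsilon)$ for a fixed small $\varepsilon>0$ (so that $\log V(x)\sim x\log x$, and in particular $x^k=o(V(x))$ for every $k$). Property (i) is clear. For (ii) I would expand $Q_x\cdot V$ and $Q_x\cdot V^{\nicefrac1q}$ and read off the asymptotics $Q_x\cdot V=(1-\varepsilon+o(1))V(x)$ and $Q_x\cdot V^{\nicefrac1q}=-(x+o(x))V(x)^{\nicefrac1q}$ as $x\to\infty$, which yield the drift bounds with $\theta=1-\nicefrac\varepsilon2<c_1=1$ after absorbing the finitely many small-$x$ values into $K$. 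For (iii)--(iv) the mechanism is uniform across all the displayed inequalities: each of $Q_x\cdot f$ (for $f$ bounded by $1$), $Q_x\cdot V$, $Q_x\cdot V^{\nicefrac1q}$, and $Q^{\sss (i)}_x\cdot V^{\nicefrac1q}$ grows at most polynomially in $x$ -- the dominant term being $(x+1)f(x)$, of order $x$ -- while $V$ outgrows every polynomial, so every fixed power of $x$ is $o(V(x))$ and all these contributions are controlled; the fluctuation $R^{\sss (i)}_x\cdot f-R_x\cdot f$ is bounded by a constant because a Bernoulli variable lies within distance $1$ of its mean. Assumption (A4) is immediate since $E=\N$ is discrete.

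For (A3) I would note that the jump matrix $Q-I$ defines an irreducible continuous-time Markov chain on $\N$; since $Q_x(\N)=1$, its quasi-stationary distribution is simply its stationary distribution, obtained by solving the nearest-neighbour-in-color linear system $\nu(Q-I)=0$, which gives the explicit $\nu=(\nu_i)_{i\ge 0}$ of the statement. The Foster--Lyapunov drift from (A'2-ii), together with the fact that finite subsets of $\N$ are petite, yields via~\cite{MeynTweedie1993} the uniform-in-$\alpha$ total-variation convergence on $\{\alpha\cdot V^{\nicefrac1q}\le C\}$ required by (A3). Theorem~\ref{thm:unbal-with-weights} then applies and gives $\tilde m_n\to\nu R/\nu R(\N)$ weakly almost surely, together with $\sup_n\tilde m_n\cdot V^{\nicefrac1q}<\infty$ almost surely (using $P_x=(x+1)\delta_x\ge\delta_x$, so the bound $\sup_n m_nP\cdot V^{\nicefrac1q}/n<\infty$ from the theorem dominates $\sup_n m_n\cdot V^{\nicefrac1q}/n$, and $m_n(\N)/n\to\nu R(\N)>0$). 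Computing $\hat\pi=\nu R$ coordinate-wise gives $\hat\pi_0=\nicefrac{(\e-2)}{(1+2\e)}$, $\hat\pi_1=\nicefrac{(2\e-4)}{(1+2\e)}$, and $\hat\pi_x=\nicefrac2{(1+2\e)}\sum_{i\ge x+1}\nicefrac1{i!}$, whence $\nu R(\N)=\nicefrac\e{(1+2\e)}$ and $\pi_x=\nu R(\N)^{-1}\hat\pi_x$ equals the claimed value; since $p_{n,x}=\tilde m_n(\{x\})$, the first two assertions follow from weak convergence alone. For the last assertion, given $f$ with $f(x)=o(V(x)^{\nicefrac1q})$ (and choosing $V$, $q$ to match the prescribed $\varepsilon$, $q$), I would split $\sum_i p_{n,i}f(i)$ at a truncation level $N$: the tail is bounded by $\big(\sup_{i>N}\nicefrac{|f(i)|}{V(i)^{\nicefrac1q}}\big)\sup_n\tilde m_n\cdot V^{\nicefrac1q}$, which tends to $0$ as $N\to\infty$ uniformly in $n$, while the finite head converges by weak convergence; letting $N\to\infty$ gives the result, the limit $\sum_i\pi_if(i)$ being finite because $\pi$ has super-exponential tails.

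I expect the main obstacle to be Assumption (A3) in exactly the form needed: not the existence of the stationary distribution, which is elementary here, but the \emph{uniform} total-variation convergence over $V^{\nicefrac1q}$-bounded families of initial laws, which requires checking that the criteria of~\cite{MeynTweedie1993} (or of~\cite{ChampagnatVillemonais2017}) go through with this particular super-polynomial Lyapunov function and that the exponent $q=\nicefrac p{(p-1)}$ produced by (A'2-iii) is compatible with the growth condition imposed on $f$. The secondary difficulty is purely bookkeeping: making sure that in (A'2-iii)--(iv) every polynomial-in-$x$ contribution is genuinely $o(V(x))$ for the chosen $\varepsilon$ and across the admissible ranges of $r$, $p$, $q$, $q'$.
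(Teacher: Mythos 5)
Your proposal is correct and follows essentially the same route as the paper: the same weighted MVPP formulation with $P_x=(x+1)\delta_x$ and the signed Bernoulli replacement kernel, the same Lyapunov function $V(x)=\prod_{i=2}^x(i-\varepsilon)$ with the identical drift asymptotics for $Q_x\cdot V$ and $Q_x\cdot V^{\nicefrac1q}$, the same polynomial-versus-superfactorial bookkeeping for (A'2-iii)--(iv), (A3) via ergodicity of the conservative chain $Q-I$ and the Meyn--Tweedie criteria, and the same explicit computation of $\nu R$ together with the $\sup_n m_nP\cdot V^{\nicefrac1q}/n$ bound to handle unbounded test functions.
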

Note that, in the proposition above, the proportions are calculated among internal nodes only.
To translate this result in terms of proportion among all nodes, we need one last calculation to take into account the leaf-nodes.
Note that the limit proportion of leaves in the random recursive tree is given by
\[\frac{\sum_{i\geq 0} i\pi_i}{1+\sum_{i\geq 0} i\pi_i} = \nicefrac12,\]
because $\sum_{i\geq 0} i\pi_i=1$ (this result is folklore and was already discussed in Section~\ref{sub:out-degrees}).
Therefore, the proportion of nodes having exactly $i$ leaf-children in the $n$-node random recursive tree converges almost surely to
$\nicefrac{\pi_i}{2}$:
We get that,
for all $i\geq 1$, the proportion of nodes having exactly $i$ leaf-children in the $n$-node random recursive tree
converges almost surely to
\[\frac1{\e}\sum_{j\geq i+1} \frac1{j!}.\]
The proportion of protected internal nodes converges almost surely to $\nicefrac12-\nicefrac1\e$.
Note that the convergence in probability of the proportion of protected nodes in the random recursive tree was already proved by Ward \& Mahmoud~\cite{MW15}; we have shown how our main result implies almost-sure convergence.

\subsection{``Sample paths'' P\'olya urns}\label{sub:samplepaths}

In this section we consider the case where the replacement measures
are the empirical occupation measures of sample paths of Markov processes. 
The section is divided into three subsections: the first one is devoted to the discrete-time setting, the second to the continuous-time setting, 
the third one to an application to stochastic-approximation
algorithms for the computation of quasi-stationary distributions.

\subsubsection{Discrete-time sample paths P\'olya urns}

Let $(X_n)_{n\in\{0,1,2,\ldots\}}$ be a Markov chain evolving in a
Polish locally-compact state space $E\cup\{\d\}$, where $\d\notin E$
is an absorbing point : $X_n=\d$ for all
$n\geq \tau_\d:=\min\{k\geq 0,\ X_k\in\d\}$ almost surely. We denote by
$\P_x$ and $\E_x$ the law of the process $X$ starting from
$x\in E\cup\d$ and its associated expectation. 
Also fix $\cal T$ a probability distribution on $\N\cup\{+\infty\}$ such that ${\cal T}(\{0\})<1$ and such that, if $(T,X)$ is distributed according to ${\cal T}\otimes \P_x$,
then $\tau_\d \wedge T$ admits an exponential moment
uniformly bounded with respect to $x\in E$; 
in other words, there
exists $\lambda>0$ such that
\[\sup_{x\in E} \E_x\left[\exp\big(\lambda (T\wedge\tau_\d)\big)\right]<\infty,
\]
(with a slight abuse of notation, 
since we also denote by $\E_x$ the expectation under ${\cal T}\otimes \P_x$).

We consider the MVPP on $E$ with random replacement measures $(R_x^{\sss (i)})_{x\in E, i\geq 1}$ being i.i.d.\ copies of
\[
  R^{\sss (1)}_x = \sum_{n=0}^{T\wedge(\tau_\d-1)} \delta_{X_n},
\]
for all $x\in E$ and all $i\geq 0$, 
where $(T,X)$ is a random variable of distribution ${\cal T}\otimes \P_x$. 
This means that, at each time, we
add to the urn the empirical measure of a sample path of length
$T\wedge (\tau_\d-1)$ of $X$. 
For simplicity, we consider the case without
weights, i.e. $P_x=\delta_x$ for all $x\in E$, so that
$Q^{\sss (i)}=R^{\sss (i)}$. 
Note that the mass of $R^{\sss (i)}_x$ is random, equal
in law to $(T+1)\wedge\tau_\d$ under ${\cal T}\otimes\P_x$, and is not
uniformly bounded in general (although its expectation is, by
assumption, uniformly bounded with respect to $x$). In particular, the
considered MVPP is unbalanced.

To ensure the convergence of this MVPP, we assume that the
following particular instance of the assumptions of Theorem~2.1
in~\cite{ChampagnatVillemonais2017} is satisfied. This abstract
criterion ensures the existence of a quasi-stationary distribution for
$X$; we will show later many examples that fall into this framework.

\medskip\noindent\textbf{Assumption (E).} There exist a positive
integer $n_1$, positive real constants $\alpha_0$, $\alpha_1$,
$\alpha_2$, $\alpha_3$, a locally bounded function with compact level
sets $V:E\rightarrow [1,+\infty)$ and a probability measure $\pi$ on a
compact subset $K\subset E$ such that
\begin{itemize}
\item[(E1)] \textit{(Local Dobrushin coefficient).} For all $x\in K$, 
  \begin{align*}
    \P_x(X_{n_1}\in~\cdot~)\geq \alpha_0 \pi(\,\cdot\,\cap K).
  \end{align*}
\item[(E2)] \textit{(Global Lyapunov criterion).} We have $\alpha_1<\alpha_2$ and, for all $x\in E$,
  \[
    \E_x V(X_1)\leq \alpha_1 V(x)+\alpha_3\1_K(x)\\
    \quad\text{ and }\quad
    \P_x(1<\tau_\d)\geq \alpha_2.
  \]
\item[(E3)] \textit{(Local Harnack inequality).} We have
  \begin{align*}
    \sup_{n\in \Z_+}\frac{\sup_{y\in K} \P_y(n<\tau_\d)}{\inf_{y\in K} \P_y(n<\tau_\d)}\leq \alpha_3
  \end{align*}
\item[(E4)] \textit{(Aperiodicity/irreducibility).} For all $x\in E$, there exists $n_4(x)$ such that, for all $n\geq n_4(x)$,
  \begin{align*}
    \P_x(X_n\in K)>0.
  \end{align*}
\end{itemize}

Under Assumption~(E), it is proved in~\cite{ChampagnatVillemonais2017}
that $X$ admits one and only one quasi-stationary distribution
$\nu_{QSD}$ such that $\nu_{QSD}\cdot V<+\infty$ and which corresponds to
the so-called minimal quasi-stationary distribution (or Yaglom limit).
{ It is also proved in~\cite{ChampagnatVillemonais2017} that there exist two positive constants  $C>0,\delta>0$ such that, for all $t\geq 0$,
\[\|\mathbb P_{\alpha}(X_t\in\cdot|X_t\notin \varnothing)-\nu_{QSD}\|_{\sss TV}
\leq C\,\alpha\cdot V\,\mathrm e^{-\delta t}.\]
}
\begin{proposition}
  \label{prop:sample-paths}
  Under Assumption~(E), if $x\mapsto \E_x f(X_1)$ is continuous on
  $E$ for all continuous bounded function $f:E\rightarrow\R$ and if
  $m_0\cdot V<\infty$, then the normalized sequence of probability
  measures $(\tilde{m}_n)_{n\in\N}$ associated to the MVPP with 
  random replacement kernel $(R^{\sss (i)})_{i\geq 1}$ 
  converges almost surely to the
  quasi-stationary distribution $\nu_{QSD}$ of $X$ in $\cP(E)$.
\end{proposition}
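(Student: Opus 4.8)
The plan is to deduce Proposition~\ref{prop:sample-paths} from Theorem~\ref{thm:unbal-with-weights} by checking its hypotheses. Write $N$ for the sub-Markovian transition kernel of $X$ on $E$, so that $N^n_x(A)=\P_x(X_n\in A,\ n<\tau_\d)$, and set $g(s)=\sum_{n\ge 0}\P(T\ge n)\,s^n$ for $s\in[0,1)$. Since $T$ and $X$ are independent and $P_x=\delta_x$, we have $Q^{\sss(i)}=R^{\sss(i)}$, $Q=R$, and the key series representation
\[
R_x=\sum_{n\ge 0}\P(T\ge n)\,N^n_x,\qquad\text{so}\qquad R_x(E)=\E_x\big[(T+1)\wedge\tau_\d\big].
\]
In particular $\kappa:=\sup_{x\in E}\E_x[(T+1)\wedge\tau_\d]<\infty$, since $(T+1)\wedge\tau_\d\le 1+T\wedge\tau_\d$ and $T\wedge\tau_\d$ has a uniformly bounded exponential moment. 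I would therefore apply Theorem~\ref{thm:unbal-with-weights} to the rescaled MVPP $\widehat m_n:=m_n/\kappa$ (which has the same renormalised sequence $\tilde m_n$, and parameters $\widehat R^{\sss(i)}=R^{\sss(i)}/\kappa$, $\widehat Q=\widehat R=R/\kappa$, $\widehat Q_x(E)\le 1$).

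Assumptions (T${}_{>0}$) and (A4) are quickly dispatched: $R^{\sss(1)}_x$ is a sum of Dirac masses, hence positive; and $N_xf=\E_x[f(X_1)\1_{X_1\in E}]$ is continuous by hypothesis, so each $N^n_xf$ is continuous by induction, and $\sum_n\P(T\ge n)N^n_xf$ converges uniformly in $x$ (its tail is $\le\|f\|_\infty\,\E_x[(T\wedge\tau_\d-M)^+]\to 0$ uniformly, by the exponential moment), so $x\mapsto R_xf$ is continuous. For (A1), iterating the survival bound in~(E2) gives $\P_x(\tau_\d\ge n+1)\ge\alpha_2^{\,n}$ for all $x$ and $n$, hence the law of $\widehat Q^{\sss(i)}_x(E)=\tfrac1\kappa\big((T+1)\wedge\tau_\d\big)$ stochastically dominates the probability measure $\mu$ with $\mu\big(\{k/\kappa,(k{+}1)/\kappa,\dots\}\big)=\P(T\ge k-1)\,\alpha_2^{\,k-1}$, whose mean is $c_1:=g(\alpha_2)/\kappa>0$; one also checks $c_1\le\inf_x\widehat Q_x(E)\le 1$. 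For (A2) I would take $V$ to be the Lyapunov function of Assumption~(E): (A2-i) is part of~(E); from $N_xV\le\alpha_1V(x)+\alpha_3\1_K(x)$ one gets inductively $N^n_xV\le\alpha_1^{\,n}V(x)+\alpha_3\sum_{j=0}^{n-1}\alpha_1^{\,n-1-j}\P_x(j<\tau_\d)$, and summing against $\P(T\ge n)$ (using monotonicity of $n\mapsto\P(T\ge n)$ and $\sum_j\P(T\ge j)\P_x(j<\tau_\d)=\E_x[(T+1)\wedge\tau_\d]\le\kappa$) yields
\[
\widehat Q_x\cdot V\le\frac{g(\alpha_1)}{\kappa}\,V(x)+\frac{\alpha_3}{1-\alpha_1},
\]
so (A2-ii) holds with $\theta=g(\alpha_1)/\kappa$ and $K=\alpha_3/(1-\alpha_1)$; crucially $\theta<c_1$ \emph{precisely} because $\alpha_1<\alpha_2$ and $g$ is strictly increasing. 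Finally $\widehat R^{\sss(1)}_x(E)=\widehat Q^{\sss(1)}_x(E)=\tfrac1\kappa((T+1)\wedge\tau_\d)$ has all polynomial moments bounded uniformly in $x$, so (A2-iii) holds with some $A$ since $V\ge 1$; one picks $p$ large enough that $p>\tfrac{\ln\theta}{\ln(\theta/c_1)}\vee 2$ (and any $r>1$).

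The delicate point is Assumption~(A3), since it concerns the continuous-time pure-jump process $\widetilde X$ with generator $\widehat Q-I=\widehat R-I$, whereas Assumption~(E) is about the discrete chain $X$. My plan is to verify for $\widetilde X$ the hypotheses of the abstract criterion of~\cite{ChampagnatVillemonais2017} (this is how (A3) is obtained in the other examples): the Lyapunov function is $V^{1/q}$, valid with coefficient $\theta^{1/q}<c_1$ by the H\"older computation of Lemma~\ref{lem:A->A'} and the choice of $p$; the local Dobrushin condition on compacts follows from~(E1) together with the domination $\widehat R\ge\tfrac{\P(T\ge 1)}{\kappa}\,N$ (so $\P^{\widetilde X}_x(\widetilde X_t\in\cdot)\ge e^{-t}\tfrac{t^{n_1}}{n_1!}\big(\tfrac{\P(T\ge 1)}{\kappa}\big)^{n_1}N^{n_1}_x(\cdot)$, and $\P(T\ge 1)>0$ since $\P(T=0)<1$); irreducibility comes from~(E4) via the same domination; and the Harnack-type control follows from~(E3). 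This produces a unique quasi-stationary distribution $\nu$ for $\widetilde X$ among measures integrating $V^{1/q}$, with the uniform exponential total-variation convergence on $\{\alpha:\alpha\cdot V^{1/q}\le C\}$ demanded by~(A3). It remains to identify $\nu$: since $\widehat R$ is a power series in $N$ it commutes with $N$, and the QSD $\nu_{QSD}$ of $X$ — which by~\cite{ChampagnatVillemonais2017} satisfies $\nu_{QSD}N=\lambda_0\nu_{QSD}$ with $\lambda_0=\P_{\nu_{QSD}}(1<\tau_\d)\in(0,1)$ and $\nu_{QSD}\cdot V<\infty$ — satisfies $\nu_{QSD}\widehat R=\tfrac{g(\lambda_0)}{\kappa}\nu_{QSD}$, hence is a QSD of $\widetilde X$; by uniqueness $\nu=\nu_{QSD}$. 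With $m_0\cdot V<\infty$ (hypothesis) and $m_0P\cdot V=m_0\cdot V<\infty$, Theorem~\ref{thm:unbal-with-weights} then gives $m_n/n\to\nu_{QSD}R=g(\lambda_0)\,\nu_{QSD}$ weakly and almost surely; since this limit has positive mass $g(\lambda_0)$, the renormalised measures satisfy $\tilde m_n\to\nu_{QSD}R/\nu_{QSD}R(E)=\nu_{QSD}$ almost surely, as claimed. The main obstacle is exactly this last transfer — checking that the quasi-stationarity criterion survives the passage from the one-step kernel $N$ to $\widehat R=\sum_n\P(T\ge n)N^n/\kappa$ and selects the same limit $\nu_{QSD}$.
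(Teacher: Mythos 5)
Your proposal is correct in substance and, for most of the verification, coincides with the paper's own proof: the paper also rescales by $\kappa=\sup_{x}R_x(E)=\sup_x\E_x[(T+1)\wedge\tau_\d]$, proves (A1) by the same geometric coupling (its $c_1=\E\big[\sum_{n=0}^{T}\alpha_2^n\big]/\kappa$ is exactly your $g(\alpha_2)/\kappa$), obtains (A2-ii) with the same constant $\theta=\E\big[\sum_{n=0}^T\alpha_1^n\big]/\kappa=g(\alpha_1)/\kappa<c_1$ from $\alpha_1<\alpha_2$, gets (A2-iii) from the uniform exponential moment of $T\wedge\tau_\d$, and (A4) from continuity of $x\mapsto\E_x f(X_1)$; the identification of the limit via $\nu_{QSD}R=\E\big[\sum_{n=0}^T\theta_0^n\big]\nu_{QSD}$ and uniqueness is also the paper's argument. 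The one place where you genuinely diverge is (A3). You propose to check a Champagnat--Villemonais criterion \emph{directly for the continuous-time process} $\widetilde X$; but Assumption~(E) is a discrete-time criterion, so this route requires its continuous-time counterpart, and hence additional semigroup estimates (a fixed-time minorization — which your Poissonization bound $e^{t(\hat R-I)}\ge e^{-t}\tfrac{t^{n_1}}{n_1!}\hat R^{n_1}$ does give — but also a drift bound for $\E_x\big[V^{1/q}(\widetilde X_t)\1_{t<\tau}\big]$, a survival lower bound $\P_x(t<\tau)\ge e^{-(1-c_1)t}$, and a Harnack control over all real times), none of which you derive. The paper avoids this entirely: it verifies Assumption~(E) \emph{for the kernel $\hat R$ itself} with Lyapunov function $V^{1/q}$ (using exactly the dominations you write down, e.g.\ $\hat R^{n_1}\ge(\P(T\ge1)/\kappa)^{n_1}N^{n_1}$ for (E1), the Lyapunov computation for (E2), and summation of (E3) along i.i.d.\ copies of $T$), applies the discrete-time Theorem~2.1 of~\cite{ChampagnatVillemonais2017} to $\hat R$, and then transfers the geometric total-variation convergence to the conditioned continuous-time semigroup by comparing the series $\sum_n\frac{(\alpha t)^n}{n!}\mu\hat R^n(E)$ with $\sum_n\frac{t^n}{n!}\mu\hat R^n(E)$. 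So your plan is viable, but to complete it you should either supply those continuous-time estimates (in the spirit of Step~2 of Lemma~\ref{lem:flow}), or — more economically — turn your dominations into a verification of (E) for $\hat R$ and finish with the paper's exponentiation argument; this transfer is precisely the step you flagged as the remaining obstacle, and it is where the actual work of the paper's proof of (A3) lies.
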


Before turning to the proof of Proposition~\ref{prop:sample-paths}, we
provide typical examples that satisfy Assumption (E) 
and consequently fall into the framework of Proposition~\ref{prop:sample-paths}.

\medskip
\textbf{Example 1.}
  If $E$ is finite and $X$ is irreducible in $E$ (\textit{i.e.}
  $\exists n\geq 1$ s.t. $\P_x(X_n=y)>0$ for all $x,y\in E$) and
  $\P_x(\tau_\d<+\infty)=1$ for all $x\in E$, then Assumption~(E) is
  satisfied for any probability distribution~$\cal T$ 
  (one simply chooses $K=E$ and $V=1$). 

  \medskip

\textbf{Example 2.}
 Consider the case $E=\mathbb N$ and $X$ is a discrete-time birth-and-death process with transition probabilities given by
  \begin{align*}
    \P_x(X_1=y)=
    \begin{cases}
      b_x&\text{if }y=x+1\\
      d_x&\text{if }y=x-1\\
      \kappa_x&\text{if }y=\d,
    \end{cases}
  \end{align*}
  where $(b_x)_{x\in\N},(d_x)_{x\in\N},(\kappa_x)_{x\in\N}$ are families
  of non-negative numbers such that $b_x+d_x+\kappa_x=1$ for all
  $x\in\N$, $d_0 = 0$ and $\inf_{x\geq 1} d_x>0$ for all $x\geq 1$. 
  If
  \[
    b_x\to 0 ~\text{ when }x\to+\infty,
  \]
  then Assumption~(E) is satisfied for any probability distribution $\cal T$ such
  that there exists $\lambda>0$ satisfying $\E\mathrm e^{\lambda T}<+\infty$ ( where the random variable $T$ has distribution $\cal T$). To see this, one simply chooses $K$ large
  enough and $V(x)=\mathrm e^{a x}$ with $a>0$ large enough.

  \medskip \textbf{Example 3.}  Assume that $(X_n)_{n\geq 0}$ is a
  $d$-type Galton-Watson process. We recall that such a process $X$
  evolves in $\mathbb N^d=E\cup\{\d\}$ and
  is absorbed at $\d=(0,\ldots, 0)$. Also, for all
  $n\geq 0$ and $i\in\{1,\ldots,d\}$, we have
  \begin{align*}
    X_{n+1}^i=\sum_{k=1}^{d}\sum_{\ell=1}^{X^k_n} \zeta^{\sss (n,\ell)}_{k,i},
  \end{align*}
  where
  $\big(\zeta^{\sss (n,\ell)}_{k,1},\ldots,\zeta^{\sss (n,\ell)}_{k,d}\big)_{n,\ell,k}$
  is a family of independent random variables in $\mathbb N^d$ such that, for
  all $k\in\{1,\ldots,d\}$,
  $\big(\zeta^{\sss (n,\ell)}_{k,1},\ldots,\zeta^{\sss (n,\ell)}_{k,d}\big)_{n,\ell}$ is
  an independent and identically distributed family. We assume that 
 the matrix of mean offspring denoted by $M=(M_{k,i})_{1\leq k,i\leq d}$ and defined by
  \begin{align*}
    M_{k,i}=\E \zeta^{\sss (n,\ell)}_{k,i},\quad\forall k,i\in\{1,\ldots,d\},
  \end{align*}
  is finite and that there exists $n\geq 1$ such that $M^n_{k,i}>0$
  for all $k,i\in\{1,\ldots,d\}$.
  Let $v$ be a positive right
  eigenvector of the matrix $M$ and denote by $\rho(M)$ its spectral
  radius.

  We assume that $X$ is subcritical (i.e.\ $\rho(M)<1$),
  aperiodic, and irreducible. Then, if there exists
  $\alpha>0$ such that
  $ \E[\exp(\alpha\,|X_1|)\mid X_0=(1,\ldots,1)]<\infty$, then $X$
  satisfies Assumption~(E). To check this, one simply
  observes that $\inf_{x\in E} \P_x(1<\tau_\d)>0$ and carefully checks
  that there exists $\varepsilon>0$ small enough and $K$ large enough so
  that Assumption~(E) is satisfied with $V:x\in E\mapsto\mathrm e^{\varepsilon\langle v,x\rangle}$.

  \medskip

  \textbf{Example 4.} Assume that $X$ evolves in $E=\R^d$ according to
  the following perturbed dynamical systems
  \begin{align*}
    X_{n+1}=f(X_n)+\xi_n,
  \end{align*}
  where $f:\R^d\rightarrow \R^d$ is a measurable function such that
  $|x|-|f(x)|\rightarrow+\infty$ when $|x|\rightarrow+\infty$,
  $(\xi_n)_{n\in\N}$ is an i.i.d.\ sequence of Gaussian random
  variables with positive density in $\R^d$. We assume that the
  process evolves in a measurable set $E$ of $\RR^d$: it is
  immediately sent to $\d\not\in\RR^d$ as soon as $X_n\not\in E$. If
  $E$ is such that
  \[
    \inf_{x\in E} \P\big(f(x)+\xi_1\in E\big)>0,
  \]
  then Assumption~(E) is satisfied. This result is obtained by
  observing that $\inf_{x\in E}\P_x(1<\tau_\d)>0$, by choosing $K$ a
  large enough ball and $V(x)=\mathrm e^{|x|}$
  (see~\cite[Example~9]{ChampagnatVillemonais2017} for more details).

  \begin{proof}[Proof of Proposition~\ref{prop:sample-paths}]
    For all $n\geq 0$, let $\hat m_n = m_n/\sup_{x\in E}
    R_x(E)$. First note that $\hat m_n$ is well defined since
    $\sup_{x\in E} R_x(E)\leq \sup_{x\in E}
    \E_x[T\wedge\tau_\d]<+\infty$, by assumption on the existence of a
    uniform exponential moment for $T\wedge \tau_\d$. Moreover,
    $(\hat m_n)_{n\geq 0}$ is an MVPP of replacement kernel
    $\hat R^{\sss (i)} = R^{\sss (i)}/\sup_{x\in E} R_x(E)$ and weight
    kernel $\hat P_x = \delta_x$ (for all $x\in E$). Let us check that
    Assumption~(A) is satisfied by $(\hat m_n)_{n\geq 0}$. Note that,
    for all $x\in E$ and all bounded measurable function
    $f:E\rightarrow \R$,
    \begin{align*}
    R_x \cdot f := \mathbb E [R^{\sss (i)}_x] \cdot f 
    =\E\Bigg[\sum_{n=0}^{T} G_n \cdot f(x) \Bigg],
    \end{align*}
    where $G_n \cdot f(x)=\E_x\big[f(X_n)\1_{n<\tau_\d}\big]$ is the sub-Markovian
    semi-group of the absorbed process~$X$.

    Moreover, we have that   
    \[
     \hat R_x(E)
     \geq \frac{\E\left[\sum_{n=0}^{T} \alpha_2^{n}  \right]}{\sup_{y\in E} R_y(E)}
     =\frac{1-\E\left[\alpha_2^{T+1}\right]}{(1-\alpha_2)\sup_{y\in E} R_y(E)}
     =: c_1 >0,
    \]
    so that Assumption~(A1) is satisfied (take $\mu$ the law of
    $\frac{(T+1)\wedge \Delta}{\sup_{y\in E} R_y(E)}$, where $T$ and $\Delta$ are independent and
    $\Delta$ is distributed with respect to a geometric law with
    parameter $1-\alpha_2$ on $\{1,2,\ldots\}$).  Moreover, we deduce from~(E) that, for some constant
    $C>0$,
    \begin{align*}
      G_n\cdot V(x)
      &\leq \alpha_1^n \, V(x)+C\big(G_{n-1}\cdot\1_E(x)+\alpha_1 G_{n-2}\cdot\1_E(x)+\cdots+\alpha_1^{n-1}\1_E(x)\big)\\
             &\leq \alpha_1^n \, V(x)+\frac{CG_{n}\cdot\1_E(x)}{\alpha_2}\left(1+\frac{\alpha_1}{\alpha_2}+\cdots+\frac{\alpha_1^{n-1}}{\alpha_2^{n-1}}\right)
    \end{align*}
    where we used (E2) and Markov's property for the second inequality. Since $\alpha_1<\alpha_2$ by assumption, then there exists some constant $C'$ such that    
    \begin{align*}
      R_x \cdot V 
      &= \E\left[\sum_{n=0}^{T} G_n\cdot V(x) \right]
      \leq \E\left[\sum_{n=0}^{T} \alpha_1^n  \right] V(x) 
      + C' \E\left[\sum_{n=0}^T G_n\cdot\1_E(x)\right]\\
    &= \E\left[\sum_{n=0}^{T} \alpha_1^n  \right]\,V(x) + C'\sup_{y\in E} \E_y\big[T\wedge\tau_\d\big].
    \end{align*}
    We thus get
    \[\hat R_x \cdot V
    \leq \theta V(x)
    + \frac{C'\sup_{y\in E} \E_y\big[T\wedge\tau_\d\big]}{\sup_{y\in E} R_y(E)}\]
    where
    \[\theta:=\frac{1-\E\left[\alpha_1^{T+1}\right]}{(1-\alpha_1)\sup_{y\in E} R_y(E)}
      <c_1.\] Assumption~(A2-ii) is thus satisfied by $\hat R$.
    Assumption (A2-iii) is satisfied for any
    $p>2\vee \frac{\ln \theta}{\ln\theta-\ln c_1}$ since
    $R^{\sss (1)}_x(E)\leq T\wedge\tau_\d$, which admits a uniformly
    bounded exponential moment by assumption.  Since (A2-i) is assumed
    to be true under~(E), we deduce that Assumption~(A2) is implied by
    Assumption~(E).

    To prove that (A3) holds true, it is sufficient, by Theorem~2.1
    in~\cite{ChampagnatVillemonais2017}, to prove that $\hat{R}$
    satisfies Assumption~(E) with Lyapunov function $V^{\nicefrac1q}$.
    Since $T\geq 1$ with positive probability, and since $X$ satisfies
    Assumption~(E1), we get that $\hat R$ also satisfies
    Assumption~(E1).  We have already proved that $\hat R$ satisfies
    Assumptions~(A1-2) with Lyapunov function $V$ and hence with
    Lyapunov function $V^{\nicefrac1q}$ (see the proof of
    Lemma~\ref{lem:A->A'}), which implies that $\hat R$ satisfies
    Assumption~(E2) with Lyapunov function
    $V^{\nicefrac1q}$. Moreover, for all $n\geq 0$ and all $x,y\in K$,
    we have
    \begin{align*}
      R^n_x(E)=\E\left[\sum_{\ell=1}^n\sum_{i_\ell=0}^{T_\ell} G_{i_1+\cdots+i_n}\cdot \1_{E}(x)\right]
      \leq a_3 \E\left[\sum_{\ell=1}^n\sum_{i_\ell=0}^{T_\ell} G_{i_1+\cdots+i_n}\cdot \1_{E}(y)\right]
      =a_3 R^n_y(E),
    \end{align*}
    where $T_1,\ldots,T_n$ are i.i.d random variables with distribution $\cal T$ and
    where we used Assumption~(E3) for~$X$; this implies that Assumption (E3) is satisfied by $\hat R$. 
    The fact that $\hat R$ satisfies Assumption~(E4) is an
    immediate consequence of (E4) for $X$, since $T\geq 1$ with
    positive probability.
    By Theorem~2.1 in~\cite{ChampagnatVillemonais2017}, this implies
    that the discrete-time Markov process with transition
    probabilities given by $\hat{R}$ admits a unique
    quasi-stationary distribution $\nu$ such that
    $\nu\cdot V^{\nicefrac1q}<+\infty$. More precisely, it implies that there
    exist $\alpha\in(0,1)$ and $C>0$ such that, for any probability measure
    $\mu$ on $E$ such that $\mu\cdot V^{\nicefrac1q}<+\infty$,
    \begin{align*}
      \left\|\frac{\mu \hat{R}^n}{\mu \hat{R}^n(E)}-\nu\right\|_{TV} \leq C \alpha^n \mu\cdot V^{\nicefrac1q}.
    \end{align*}
    In particular, for all measurable set $A\subset E$,
    \begin{align*}
     \big|\mu \hat{R}^n(A)-\mu\hat{R}^n(E)\,\nu(A)\big|\leq  C \mu\hat{R}^n(E) \alpha^n \mu\cdot V^{\nicefrac1q}.
    \end{align*}
and hence that for all $t\geq 0$,
    \begin{align*}
     \big|\mu \mathrm e^{t \hat{R}}(A)-\mu \mathrm e^{t \hat{R}}(E)\,\nu(A)\big|\leq C \mu \mathrm e^{t \alpha \hat{R}}(E) \mu\cdot V^{\nicefrac1q}.
    \end{align*}
    Since $\alpha\in(0,1)$,
    $\sum_{n=0}^{+\infty} \frac{t^n\alpha^n}{n!}\mu\hat{R}^n(E)$ is
    negligible in front of 
    $\sum_{n=0}^{+\infty} \frac{t^n}{n!}\mu\hat{R}^n(E)$ when $t\to+\infty$, so that
    \begin{align*}
     \left|\frac{\mu \mathrm e^{t \hat{R}}(A)}{\mu \mathrm e^{t \hat{R}}(E)}-\nu(A)\right|\leq C \frac{\mu \mathrm e^{t \alpha \hat{R}}(E)}{\mu \mathrm e^{t\hat{R}}(E)} \mu\cdot V^{\nicefrac1q}\to 0~ \text{ when }t\to+\infty.
    \end{align*}
    Note that ${\mu \mathrm e^{t \hat{R}}(A)}/{\mu \mathrm e^{t \hat{R}}(E)}$ is the
    law of the continuous-time process with sub-Markovian jump kernel
    $\hat R-\mathrm{Id}$ at time~$t$ conditioned not to be absorbed at
    time~$t$. Therefore, we can conclude that (A3) is satisfied by $\hat R$.

    Finally, the continuity of $\hat R_x$ with respect to $x$
    directly derives from the continuity of $\delta_x G_1$
    with
    respect to $x$ and from the uniform boundedness of~$\E_x\left[e^{\lambda T\wedge\tau_\d}\right]$ with respect to~$x$. Therefore, Theorem~\ref{thm:unbal-with-weights} applies and gives that $\hat m_n/\hat m_n(E) = \tilde m_n$ converge almost surely (for the topology of weak convergence) to a probability measure $\nu$. This distribution $\nu$ is the unique quasi-stationary distribution of the process of sub-Markovian jump kernel $\hat R - I$ such that $\nu\cdot V^{\nicefrac1q}<+\infty$.
    
It only remain to show that $\nu$ is indeed equal to $\nu_{QSD}$, the unique quasi-stationary distribution of X such that $\nu_{QSD}\cdot V<+\infty$. 
Since $\nu_{QSD}$ is a quasi-stationary distribution for $X$, we have
    \[\nu_{QSD}R\cdot f
    =\E\sum_{n=0}^T \nu_{QSD} G_n\cdot f
    =\E\sum_{n=0}^T \theta_0^n \nu_{QSD}\cdot f
    =\E\left[\sum_{n=0}^T \theta_0^n\right] \nu_{QSD}\cdot f,\]
    where $\theta_0:=\nu_{QSD}G_1(E)$.
This implies that $\nu_{QSD}$ is a quasi-stationary distribution of the discrete-time sub-Markov process of transitions~$\hat R$. Moreover, since $\nu_{QSD}\cdot V<+\infty$, $V\geq 1$ and $\nicefrac1q<1$, we have $\nu_{QSD}\cdot V^{\nicefrac1q}<+\infty$, implying that $\nu=\nu_{QSD}$, by uniqueness of $\nu$.
  \end{proof}
  
  \subsubsection{Continuous-time sample paths P\'olya urns}

  Let $(X_t)_{t\in[0,+\infty)}$ be the solution in $E=\R^d$ to the
  stochastic differential equation
  \begin{align*}
    \mathrm dX_t=\mathrm dB_t+b(X_t) \mathrm dt,
  \end{align*}
  where $B$ is a standard $d$-dimensional Brownian motion and
  $b:\R^d\mapsto \R^d$ is
  locally H\"older-continuous in $\R^d$. We assume that $X$ is subject to an
  additional soft killing $\kappa:\R^d\mapsto [0,+\infty)$, which is
  continuous and uniformly bounded: the process is sent to a cemetery
  point $\d\notin\R^d$ at rate $\kappa(X_t)$ and we denote by
  $\tau_\d$ the hitting time of $\d$ by $X$. As in the
  discrete-time case, we denote by $\P_x$ and $\E_x$ the law of the process
  $X$ starting from $x\in E\cup\d$ and its associated expectation, and
  we consider $\cal T$ a probability distribution on $[0,+\infty]$ such that
  $\tau_\d\wedge T$ admits under ${\cal T} \otimes \P_x$ an exponential
  moment uniformly bounded with respect to $x\in E$.

  We consider the unbalanced MVPP on $E$ without weights and with
  random replacement kernels $(R^{\sss (i)})_{i\geq 1}$ being i.i.d.\ copies of
  \[
    R^{\sss (1)}_x= \int_0^{T\wedge \tau_\d} \delta_{X_t}\,\mathrm dt, \quad (\forall x\in E),
  \]
  where $(T,X)$ is distributed
  according to ${\cal T}\otimes \P_x$.

  \begin{proposition}
    \label{prop:sample-paths-cont}
  If
  \begin{align*}
    \limsup_{|x|\rightarrow+\infty} \frac{\langle b(x),x\rangle}{|x|}
    < -\frac{3}{2} \|\kappa\|^{\nicefrac12}_\infty,
  \end{align*}
  then Theorem~\ref{thm:unbal-with-weights} applies with
  $V:x\in\R^d\mapsto \exp(\|\kappa\|^{\nicefrac12}_\infty |x|)$. In
  particular, if $m_0\cdot V<\infty$, the normalized sequence of
  probability measures $(\tilde{m}_n)_{n\in\N}$ associated to the MVPP
  with random replacement kernels $(R^{\sss (i)})_{i\geq 1}$ converges
  almost surely to the unique quasi-stationary distribution
  $\nu_{QSD}$ of $X$ such that $\nu_{QSD}\cdot V<+\infty$.
  \end{proposition}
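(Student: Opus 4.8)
The plan is to mimic the proof of Proposition~\ref{prop:sample-paths}: normalise the MVPP, verify Assumptions~(A1)--(A4) for the normalised process, apply Theorem~\ref{thm:unbal-with-weights}, and identify the limit with $\nu_{QSD}$. Write $\hat L := \tfrac12\Delta + b\cdot\nabla - \kappa$ for the generator of the killed diffusion, $G_tf(x) := \E_x[f(X_t)\1_{t<\tau_\d}]$ for its sub-Markovian semi-group, and $S := \sup_{x\in E}\E_x[T\wedge\tau_\d]$, which is finite by the uniform exponential moment hypothesis. Set $\hat m_n := m_n/S$; it is an MVPP with $\hat R^{\sss (i)} = R^{\sss (i)}/S$, $\hat P_x = \delta_x$ and $\hat Q^{\sss (i)} = \hat R^{\sss (i)}$, and since occupation measures are non-negative it satisfies~(T${}_{>0}$). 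For bounded measurable $f$ one has $R_x\cdot f = \E_{{\cal T}}\big[\int_0^T G_tf(x)\,\mathrm dt\big]$.

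The core of the argument is a Lyapunov estimate for $V(x) = \exp(\|\kappa\|^{\nicefrac12}_\infty|x|)$ (suitably smoothed near the origin so that $V\in C^2$; this changes neither the compactness of its level sets nor $\hat LV$ outside a compact set). Writing $a = \|\kappa\|^{\nicefrac12}_\infty$, a direct computation gives, for $x\neq 0$,
\[
\hat LV(x) = V(x)\Big(\tfrac{a^2}{2} + \tfrac{a(d-1)}{2|x|} + a\,\tfrac{\langle b(x),x\rangle}{|x|} - \kappa(x)\Big),
\]
so the hypothesis $\limsup_{|x|\to\infty}\tfrac{\langle b(x),x\rangle}{|x|} < -\tfrac32 a$ is exactly what forces $\hat LV/V < -\eta$ outside a compact set for some $\eta > a^2 = \|\kappa\|_\infty$: the $\tfrac12$ in front of $\Delta$ contributes $+\tfrac{a^2}{2}$, the drift contributes strictly less than $-\tfrac32 a^2$, the killing term is $\le 0$, and the curvature term is $o(1)$. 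Hence $\hat LV \le -\eta V + C\1_K$ for a compact $K$ and some $C\ge 0$ with $\eta>\|\kappa\|_\infty$, and the same computation with $V^{\nicefrac1q} = e^{(a/q)|x|}$ gives $\hat L(V^{\nicefrac1q}) \le -\eta_q V^{\nicefrac1q} + C\1_K$ with $\eta_q > \|\kappa\|_\infty$ provided $q$ is close enough to~$1$; we therefore fix $p$ (and $q = p/(p-1)$) large throughout. From $\hat LV \le -\eta V + C\1_K$ one gets $G_tV(x) \le e^{-\eta t}V(x) + C\int_0^t e^{-\eta(t-s)}G_s\1_K(x)\,\mathrm ds$, whence, integrating over $t\in[0,T]$ and averaging over~${\cal T}$,
\[
\hat Q_x\cdot V \le \theta\,V(x) + K',\qquad \theta := \tfrac1S\,\E_{{\cal T}}\!\Big[\tfrac{1-e^{-\eta T}}{\eta}\Big],
\]
with $K' < \infty$ because $\int_0^{T\wedge\tau_\d}\1_K\le T\wedge\tau_\d$ has uniformly bounded expectation; this is (A2-ii), and likewise for $V^{\nicefrac1q}$. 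Since $\P_x(\tau_\d>t) = \E_x[e^{-\int_0^t\kappa(X_s)\,\mathrm ds}] \ge e^{-\|\kappa\|_\infty t}$, the law of $\hat Q^{\sss (i)}_x(E) = (T\wedge\tau_\d)/S$ stochastically dominates the law $\mu$ of $(T\wedge\tilde\tau)/S$, $\tilde\tau\sim\mathrm{Exp}(\|\kappa\|_\infty)$ independent of~$T$, whose mean is $c_1 = \tfrac1S\,\E_{{\cal T}}\big[\tfrac{1-e^{-\|\kappa\|_\infty T}}{\|\kappa\|_\infty}\big] > 0$; as $\lambda\mapsto\tfrac{1-e^{-\lambda t}}{\lambda}$ is strictly decreasing and $\eta > \|\kappa\|_\infty$, this yields $\theta < c_1$, while $\sup_x\hat Q_x(E) = 1$, so (A1) holds. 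Finally (A2-iii) is immediate: $R^{\sss (i)}_x(E) = T\wedge\tau_\d$ has uniformly bounded moments of every order (by the uniform exponential moment) while $V\ge 1$; and $p$ is taken large enough to also satisfy $p > 2\vee\frac{\ln\theta}{\ln(\theta/c_1)}$.

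For~(A3) we apply the quasi-stationarity criterion of~\cite{ChampagnatVillemonais2017} to~$X$: the Lyapunov bound for $V^{\nicefrac1q}$ provides the global Lyapunov/absorption condition, while the local Dobrushin minorisation, Harnack inequality and irreducibility hold because the diffusion is non-degenerate with continuous drift and bounded continuous killing rate (as in the diffusion examples of~\cite{ChampagnatVillemonais2017}). This gives a quasi-stationary distribution $\nu_{QSD}$ with $\nu_{QSD}\cdot V < \infty$ and exponential convergence of $\P_\alpha(X_t\in\cdot\mid t<\tau_\d)$ to $\nu_{QSD}$ in total variation, uniformly on $\{\alpha : \alpha\cdot V^{\nicefrac1q}\le C\}$ — that is, (A3). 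Assumption~(A4) follows from the Feller continuity of the killed diffusion (continuity of $x\mapsto\delta_xG_t$, since $b,\kappa$ are continuous and the diffusion non-degenerate) together with the uniform exponential moment, by dominated convergence, exactly as in Proposition~\ref{prop:sample-paths}. Theorem~\ref{thm:unbal-with-weights} then applies to $\hat m_n$ (using $m_0\cdot V<\infty$), so $\tilde m_n = \hat m_n/\hat m_n(E)$ converges almost surely, weakly, to the unique quasi-stationary distribution $\nu$ of the jump process with generator $\hat R - I$ satisfying $\nu\cdot V^{\nicefrac1q}<\infty$. To identify $\nu = \nu_{QSD}$, note that $\nu_{QSD}G_t = e^{-\lambda_0 t}\nu_{QSD}$ gives $\nu_{QSD}R = \E_{{\cal T}}\big[\tfrac{1-e^{-\lambda_0 T}}{\lambda_0}\big]\nu_{QSD}$, so $\nu_{QSD}$ is a quasi-stationary distribution for $\hat R - I$ with $\nu_{QSD}\cdot V^{\nicefrac1q} < \infty$ (as $V^{\nicefrac1q}\le V$); uniqueness forces $\nu = \nu_{QSD}$, and $\tilde m_n\to\nu_{QSD}R/\nu_{QSD}R(E) = \nu_{QSD}$.

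The main obstacle is the Lyapunov computation with the sharp constant $\tfrac32$: one must check not merely that $V = e^{a|x|}$ is a Lyapunov function (which would only need $\limsup\tfrac{\langle b,x\rangle}{|x|} < -\tfrac12 a$), but that its decay rate $\eta$ strictly exceeds the killing rate $\|\kappa\|_\infty$ — the single inequality that simultaneously delivers $\theta < c_1$ in~(A1)--(A2) and the quasi-stationarity criterion~(A3) for~$X$ — and that this is inherited by $V^{\nicefrac1q}$ upon taking $p$ large. The remaining ingredients (the normalisation, the moment bound~(A2-iii), the Feller continuity~(A4), and the identification of the limit) are routine adaptations of the discrete-time case.
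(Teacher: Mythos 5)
Most of your verification lines up with the paper's own argument: the normalisation by $S=\sup_x R_x(E)=\sup_x\E_x[T\wedge\tau_\d]$, the Lyapunov computation for $V=e^{a|x|}$ with $a=\|\kappa\|_\infty^{1/2}$ showing that the hypothesis with the constant $\nicefrac32$ is exactly what makes the decay rate $\eta$ exceed $\|\kappa\|_\infty$, the resulting bound $\hat Q_x\cdot V\leq\theta V(x)+K$ with $\theta<c_1$, the verification of (A1) by stochastic domination (in fact cleaner than the paper's parenthetical choice of $\mu$), (A2-iii) from the uniform exponential moment of $T\wedge\tau_\d$, (A4) from Feller continuity, and the final identification $\nu_{QSD}R=\E\big[\frac{1-e^{-\lambda_0T}}{\lambda_0}\big]\nu_{QSD}$ are all essentially the paper's steps.

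The genuine gap is your treatment of (A3). Assumption (A3) is not about the diffusion $X$: it concerns the continuous-time \emph{pure-jump} process whose sub-Markovian jump kernel is $\hat Q-I=\hat R-I$, i.e.\ it requires that $\mu\,\e^{t\hat R}/\mu\,\e^{t\hat R}(E)$ converge to a QSD, uniformly over $\{\mu:\mu\cdot V^{\nicefrac1q}\leq C\}$. Applying the criterion of \cite{ChampagnatVillemonais2017} to the diffusion, as you do, produces uniform convergence of $\P_\alpha(X_t\in\cdot\mid t<\tau_\d)$ for the \emph{diffusion}, which is a different semigroup ($\hat R$ is a mixture over a random horizon of the diffusion's occupation kernel, and $\e^{t(\hat R-I)}$ is not obtained from $G_t$ by any immediate transformation); your later observation that $\nu_{QSD}$ is an eigenmeasure of $\hat R$ identifies the candidate limit but does not supply the uniform attractivity that (A3) demands and that drives the stochastic-approximation argument. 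The paper fills exactly this hole: it proves that the \emph{discrete-time} kernel $\hat R$ itself satisfies condition (E) with Lyapunov function $V^{\nicefrac1q}$ — transferring the minorisation (E1), the Harnack bound (E3) and irreducibility (E4) from the diffusion to $\hat R$ by integrating $G_{t_1+\cdots+t_n}$ over independent copies of $T$ (using Proposition~12.1 of \cite{ChampagnatVillemonais2017}), and (E2) from the Lyapunov bound you derived — then invokes Theorem~2.1 of \cite{ChampagnatVillemonais2017} to get geometric convergence of $\mu\hat R^n/\mu\hat R^n(E)$, and finally Poissonises ($\e^{t\hat R}=\sum_n\frac{t^n}{n!}\hat R^n$, with the $\alpha^n$-weighted sum negligible) as in the proof of Proposition~\ref{prop:sample-paths} to deduce (A3) for the jump process. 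Without this intermediate verification for $\hat R$ your application of Theorem~\ref{thm:unbal-with-weights} is not justified.
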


  \begin{remark} The fact that $X$ admits a unique
  quasi-stationary distribution $\nu_{QSD}$ such that
  $\nu_{QSD}\cdot V<+\infty$ is proved
  in~\cite{ChampagnatVillemonais2017}. Proposition~\ref{prop:sample-paths-cont}
  could be generalized to diffusion processes with a non constant
  diffusion coefficient; the proof would be very similar. More
  generally, Condition~(F) of~\cite{ChampagnatVillemonais2017} can be
  used to show that Theorem~\ref{thm:unbal-with-weights} applies to
  other continuous-time processes. We do not develop these
  generalizations further, but provide two simple examples that
  fall into the framework of the proof of
  Proposition~\ref{prop:sample-paths-cont}:
  \end{remark}

  \medskip \textbf{Example 1.}  If $E$ is finite and $X$ is regular
  and irreducible in $E$ (\textit{i.e.}
  $\P_x(\exists t\geq 0,\ s.t.\ X_t=y)>0$ for all $x,y\in E$), and if $\P_x(\tau_\d<+\infty)=1$ for all
  $x\in E$, then Theorem~\ref{thm:unbal-with-weights} applies for any
  probability distribution $\cal T$. (One can take $V=1$.)
  
  \medskip \textbf{Example 2.}  Let $X$ be a continuous-time multitype
  birth and death process, taking values in $E\cup\{\d\}=\mathbb N^d$ for
  some $d\geq 1$, with transition rates
  \[
  q_{x,y}=
  \begin{cases}
    b_i(x) & \text{if }y=x+e_i,\\ 
    d_i(x) & \text{if }y=x-e_i,\\
    0 & \text{otherwise,}
  \end{cases}
  \]
  where $(e_1,\ldots,e_d)$ is the canonical basis of $\mathbb N^d$, and
  $\d=(0,\ldots,0)$.  We assume that $b_i(x)>0$ and $d_i(x)>0$ for all
  $1\leq i\leq d$ and $x\in E$.
  
  If
  \begin{align}
  \label{eq:multi-bd-1}
  \frac{1}{|x|}\sum_{i=1}^d\big(d_i(x)-b_i(x)\big) \to +\infty \quad\text{ when }|x|\to+\infty,
  \end{align}
  or if there exists $\delta>1$ such that
  \begin{align}
  \label{eq:multi-bd-2}
  \sum_{i=1}^d\big(d_i(x)-\delta\,b_i(x)\big)\to+\infty\quad\text{ when }|x|\to+\infty,
  \end{align}
  then Theorem~\ref{thm:unbal-with-weights} applies for any
  probability distribution~$\cal T$ admitting an exponential moment.
  One can choose $V(x)=|x|=x_1+\ldots+x_d$ if~\eqref{eq:multi-bd-1} is
  satisfied, and $V(x)=\exp(\varepsilon x_1+\cdots+\varepsilon x_d)$
  with $\varepsilon>0$ small enough if~\eqref{eq:multi-bd-2} is
  satisfied.  To prove this, one would simply use the same approach as
  in the proof of Proposition~\ref{prop:sample-paths-cont} together
  with the results of~\cite[Example~7]{ChampagnatVillemonais2017} and
  the fact that the killing rate is bounded by $d_1(e_1)+\cdots+d_d(e_d)$.

  If moreover the birth and death process comes back from infinity
  (see for instance~\cite{MartinezSanEtAl2013} for the one dimensional
  case), then $\tau_\d$ admits a uniformly bounded exponential moment
  and hence the conclusion of
  Proposition~\ref{prop:sample-paths-cont} applies for any
  probability distribution $\cal T$.

  \begin{proof}[Proof of Proposition~\ref{prop:sample-paths-cont}]
    For all $n\geq 0$, we let $\hat m_n = m_n/\sup_{x\in E} R_x(E)$;
    note that $(\hat m_n)_{n\geq 0}$ is well defined since
    $\sup_{x\in E} R_x(E)\leq \sup_{x\in E} \E_x[T\wedge
    \tau_\d]<+\infty$, by assumption on the existence of a uniform
    exponential moment for $T\wedge \tau_\d$.  One can check that
    $(\hat m_n)_{n\geq 0}$ is an MVPP of replacement kernel
    $\hat R^{\sss (i)} = R^{\sss (i)}/\sup_{x\in E} R_x(E)$ and weight
    kernel $\hat P_x = \delta_x$ (for all $x\in E$); note that we have
    $\hat Q=\hat R\hat P=\hat R$. Let us check that Assumption~(A) is
    satisfied by $(\hat m_n)_{n\geq 0}$. Note that, for all $x\in E$
    and all bounded measurable function $f:E\rightarrow \R$,
    \begin{align*}
      R_x \cdot f :=\mathbb E [R^{\sss (i)}]\cdot f
      = \E\left[\int_0^T G_t \cdot f(x)\,\mathrm d t \right],
    \end{align*}
    where $G_t \cdot f(x)=\E_x[f(X_t)\1_{t<\tau_\d}]$ is the sub-Markovian
    semi-group of the absorbed process~$X$.

We have
    \begin{align*}
     \hat R_x(E)\geq c_1:=\frac{\E\left[\int_0^T \exp(-\|\kappa\|_\infty t) \,\mathrm dt \right]}{\sup_{y\in E} R_y(E)},
    \end{align*}
    implying that Assumption~(A1) is satisfied (take $\mu = \delta_{c_1}$).

    Let us now check Assumption~(A2). The function $V$ clearly satisfies (A2-i). Moreover, one easily checks that
    \begin{align*}
      \frac{1}{2}\sum_{i=1}^d \frac{\d^2}{\d x_i^2}V(x)+\sum_{i=1}^d b_i(x)\,\frac{\d}{\d x_i} V(x)\leq - (\|\kappa\|_\infty+\varepsilon) V(x)+ C,
    \end{align*}
    for some positive constants $\varepsilon$ and $C$. Setting
    $V(\d)=0$, using Dynkin's formula for the killed process and a
    localization argument, we get that
    \begin{align*}
      \E_x\left[\mathrm e^{(\|\kappa\|_\infty+\varepsilon)t\wedge\tau_\d}V(X_{t\wedge\tau_\d})\right]\leq  V(x) + C\,\E_x\left[\int_0^{t\wedge\tau_\d} \mathrm e^{(\|\kappa\|_\infty+\varepsilon) s} \,\mathrm ds \right].
    \end{align*}
    and hence that
    \begin{align*}
      G_t V(x)=\E_x\left[V(X_t)\1_{t<\tau_\d}\right]
      \leq \mathrm e^{-(\|\kappa\|_\infty+\varepsilon)t} V(x)
      + C\int_0^{t} \mathrm e^{-(\|\kappa\|_\infty+\varepsilon) (t-s)}  \P_x(s<\tau_\d)\,\mathrm ds.
    \end{align*}
    As a consequence, we have
    \begin{align}
      R_x V &= \E\left[\int_0^T G_t V(x)\,\mathrm dt \right]
 \leq \E\left[\int_0^T \mathrm e^{- (\|\kappa\|_\infty+\varepsilon) t} \, \mathrm dt\right] V(x) + \frac{C}{\|\kappa\|_\infty+\varepsilon} \E_x\left[T\wedge\tau_\d\right]\nonumber\\
&\leq  \theta \sup_{y\in E} R_y(E)\,V(x) +\frac{C}{(\|\kappa\|_\infty+\varepsilon)\lambda}\sup_{y\in E} \E_y\left[\mathrm e^{\lambda(T\wedge\tau_\d)}\right]\label{eqNtruc}
    \end{align}
    where
    $\theta:=\E\big[\int_0^T \exp(-\lambda_1 t) \, \mathrm dt\big]/\sup_{y\in E} R_y(E) < c_1$, and where $\sup_{y\in E} \E_y\left[\mathrm e^{\lambda(T\wedge\tau_\d)}\right]<+\infty$ by assumption. Dividing the above inequality by $\sup_{x\in E} R_x(E)$
    entails that Assumption~(A2-ii) is satisfied.  Finally, Assumption
    (A2-iii) is implied by the fact that $R^{\sss (i)}_x(E)$ is
    stochastically dominated by $T\wedge\tau_\d$ under $\P_x$, which
    admits a uniformly bounded exponential moment by assumption.
    As a consequence, we deduce that
    Assumption~(A2) is satisfied by $\hat R$.

    To prove that (A3) holds true, we first prove that $\hat{R}$
    satisfies Assumption~(E) above. 

    Using the same approach
    as in \cite[Proposition~12.1]{ChampagnatVillemonais2017}, we
    deduce that there exist a probability measure ${\pi}$ on ${K}$ and
    two positive constants $b$ and $t_\pi$ such that
    \begin{align*}
      \P_x(X_{t_\pi}\in\cdot)\geq b\,\pi(\cdot),\ \forall x\in {K}.
    \end{align*}
    Since $X$ is an elliptic diffusion process in $\R^d$, it
    satisfies, for any $t>0$, $\inf_{x\in   K} \P_x(X_t\in   K)>0$. Using
    Markov's property, we deduce that, for any $t>t_\pi$, there
    exists a constant $b_t>0$ such that
    $\P_x(X_t\in\cdot)\geq b_t \,\pi(\cdot)$, for all $x\in   K$.  In
    particular, we obtain, for any integer $n\geq 1$ and any
    measurable set $A\subset   K$, that, for all $x\in   K$,
    \begin{align*}
      R^n_x\cdot \1_A & = \E\left[\int_0^{T_1} \cdots \int_0^{T_n} G_{t_1+\cdots+t_n} \cdot\1_A(x) \,\mathrm dt_1\cdots \mathrm dt_n\right]\\
                 & \geq \E\left[\int_0^{T_1} \cdots \int_0^{T_n} b_{t_1+\cdots+t_n} \1_{t_1+\cdots+t_n\geq t_\pi}\,\mathrm dt_1\cdots \mathrm dt_n\right]\,\, {\pi}(A),
    \end{align*}
    where $T_1,\ldots,T_n$ are i.i.d.\ random variables distributed with
    respect to $\cal T$. Since $\P(T_1>0)>0$, we deduce that there
    exists $ {n}_1$ large enough such that
    $\P(t_1+\ldots+t_{ {n}_1}\geq t_\pi)>0$ and hence such
    that
    $\E\left[\int_0^{T_1} \cdots \int_0^{T_{ {n}_1}}
      b_{t_1+\cdots+t_{ {n}_1}}\1_{t_1+\cdots+t_{ {n}_1}\geq
        t_\pi}\,\mathrm dt_1\ldots \mathrm dt_{ {n}_1}\right]>0$. In
    particular, there exists a constant
    $ {\alpha}_0>0$ such that
    \begin{align}
      \label{eqNnewnew}
      \hat{R}^{ {n}_1}_x\cdot \1_A \geq  {\alpha}_0  {\pi}(A\cap  {K}).
    \end{align}
    This entails that Condition~(E1) is satisfied.

    We already proved that $\hat{R}_x(E)\geq c_1$ for all $x\in
    E$. Now, for any fixed $\alpha_1\in(\theta^{\nicefrac1q},c_1)$ and $\rho>0$
    large enough, we deduce from~\eqref{eqNtruc} and as in the proof of Lemma~\ref{lem:A->A'}
    that
    \begin{align*}
      \hat{R}_x\cdot V^{\nicefrac1q} \leq \alpha_1 V^{\nicefrac1q}(x)+\alpha_3\,\1_{|x|\leq \rho},\ \forall x\in \R^d.
    \end{align*}
    Setting $K=\{x\in\R^d,\ |x|\leq \rho\}$, we deduce that
    Condition~(E2) holds true with $\alpha_1$, $\alpha_2=c_1$ and
    $\alpha_3$ large enough, with Lyapunov function $V^{\nicefrac1q}$.

    We also deduce from
    \cite[Proposition~12.1]{ChampagnatVillemonais2017} that
    \begin{align*}
     {\alpha}_3:=\inf_{t\geq 0} \frac{\inf_{x\in {K}} G_t\cdot\1_E(x)}{\sup_{x\in {K}} G_t\cdot\1_E(x)}=  \inf_{t\geq 0} \frac{\inf_{x\in {K}} \P_x(t<\tau_\d)}{\sup_{x\in {K}} \P_x(t<\tau_\d)}>0.
    \end{align*}
    Since $R_x(E)=\E\big[\int_0^T G_t\cdot\1_E(x)\,\mathrm dt\big]$, we get that
    \begin{align*}
     \inf_{t\geq 0} \frac{\inf_{x\in {K}} \hat{R}_x(E)}{\sup_{x\in {K}} \hat{R}_x(E)})=\inf_{t\geq 0} \frac{\inf_{x\in {K}} R_x(E)}{\sup_{x\in {K}} R_x(E)}=  {\alpha}_3>0.
    \end{align*}
    This implies that Condition~(E3)
    holds true.

    Finally, using similar calculations as in the derivation
    of~\eqref{eqNnewnew}, we deduce that Condition~(E4) also holds
    true. This concludes the proof of Condition~(E) with Lyapunov function~$V^{\nicefrac1q}$.

    By Theorem~2.1 in~\cite{ChampagnatVillemonais2017}, this implies
    that the discrete-time Markov process with transition
    probabilities given by $\hat{R}$ admits a unique
    quasi-stationary distribution $\nu_{QSD}$ such that
    $\nu_{QSD}\cdot V^{\nicefrac1q}<+\infty$. 
    Using the same argument as in the proof of (A3) in the proof of Proposition~\ref{prop:sample-paths}, we can show that this implies that (A3) is satisfied by $\hat R$.

    The continuity of $x\mapsto R_x$ (and thus of $x\mapsto \hat R_x$) is a consequence of the continuity of $x\mapsto \E_x\big[f(X_t)\1_{t<\tau_\d}\big]$ for all
    continuous bounded function $f:E\rightarrow\R$ and all $t\geq 0$
    (see, e.g.~\cite[Theorem~7.2.4]{StrookVaradhan});
    therefore, Assumption~(A4) is also satisfied.

    We have proved that Assumption~(A) holds true for the MVPP of replacement kernels $(\hat R^{\sss (i)})$; therefore, Theorem~\ref{thm:unbal-with-weights} applies. To conclude the proof, note that the continuous-time process~$X$ also admits a unique
    quasi-stationary distribution $\mu_{QSD}$ such that
    $\mu_{QSD}\cdot V^{\nicefrac1q}<+\infty$
    (see~\cite[Example~2]{ChampagnatVillemonais2017}), i.e.\ a
    probability measure such that
    $\mu_{QSD} \cdot G_t=\mu_{QSD} \cdot G_t(E)\,\mu_{QSD}$ for all $t>0$. The
    definition of $\hat{R}$ implies that $\mu_{QSD}$ is also a
    quasi-stationary distribution for $\hat{R}$; because
    $\mu_{QSD}\cdot V^{\nicefrac1q}<+\infty$ and by uniqueness, we get that $\nu_{QSD}=\mu_{QSD}$, which concludes the proof.
  \end{proof}  
  
\subsubsection{Application to stochastic-approximation algorithms for the computation of quasi-stationary distributions}\label{sub:approx_QSD}
 It is a difficult question to give an explicit formula for the quasi-stationary distribution of a sub-Markovian process, even when one can prove that this distribution exists and is unique. Stochastic approximation provides algorithms that allow to numerically approximate the quasi-stationary distribution of a given sub-Markovian process.

 The recent papers~\cite{BGZ16, BC15, BCP++} introduce such stochastic
 approximation algorithms for discrete-time sub-Markovian processes
 evolving in compact spaces and~\cite{WangRobertsEtAl2018} studies
 these algorithms for diffusion processes in compact manifolds.  Our
 results allow to extend these convergence results to discrete- and
 continuous-time processes in compact and non-compact spaces. We
 illustrate this approach with the case of the approximation of the
 quasi-stationary distribution of a diffusion process satisfying the
 conditions of Proposition~\ref{prop:sample-paths-cont} by a
 stochastic-approximation algorithm. This particular example was not
 covered by the previous literature since it is a continuous-time
 process and its state space is not compact.

As in the previous section, let $(X_t)_{t\in[0,+\infty)}$ be the
solution in $E=\R^d$ to the stochastic differential equation
\begin{align*}
  \mathrm dX_t=\mathrm dB_t+b(X_t) \mathrm dt,
\end{align*}
where $B$ is a standard $d$-dimensional Brownian motion and
$b:\R^d\mapsto \R^d$ is locally H\"older continuous in $\R^d$. We
assume that $X$ is subject to an additional soft killing
$\kappa:x\mapsto [0,+\infty)$, which is continuous, uniformly bounded
and such that $\kappa\geq 1$. Note that the quasi-stationary
distribution of $X$ with killing rate $\kappa$ is the same as the
quasi-stationary distribution of $X$ with a killing rate $\kappa-1$.

We also assume that
\begin{align*}
  \limsup_{|x|\rightarrow+\infty} \frac{\langle b(x),x\rangle}{|x|}
  < -\frac{3}{2} \|\kappa\|^{\nicefrac12}_\infty,
\end{align*}
so that the process $X$ admits a unique quasi-stationary distribution
$\nu_{QSD}$ such that $\nu_{QSD}\cdot V<+\infty$, where
$V:x\in\R^d\mapsto \exp(\|\kappa\|^{\nicefrac12}_\infty |x|)$ (see the
previous subsection for details). 

We consider the self-interacting process $(Y_t)_{t\geq 0}$ evolving
with the same dynamic of~$X$ but, at rate $\kappa$, instead of being killed, 
it jumps to a new position chosen accordingly to its
empirical  occupation measure $\frac{1}{t}\int_0^t \delta_{Y_s}\,\mathrm ds$. More formally, it evolves following the dynamic
\begin{align*}
  \mathrm dY_t=\mathrm dB_t+b(Y_t)\mathrm dt+\mathrm dN_t,\quad Y_0=y\in \R^d,
\end{align*}
where $(N_t)_{t\geq 0}$ is a time inhomogeneous pure jump process with jump measure given by
\begin{align*}
  \frac{\kappa(Y_{t-})}{t}\int_0^t \delta_{Y_s-Y_{t-}}\,\mathrm ds.
\end{align*}

\begin{proposition}
  The empirical occupation measure
  $\frac{1}{t}\int_0^t \delta_{Y_s}\,\mathrm ds$ converges almost-surely when
  $t\to+\infty$, with respect to the topology of weak convergence, to the
  quasi-stationary distribution $\nu_{QSD}$ of $X$.
\end{proposition}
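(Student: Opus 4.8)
The plan is to realise the self-interacting process $(Y_t)_{t\ge 0}$ as a continuous-time ``sample paths'' P\'olya urn and to invoke Proposition~\ref{prop:sample-paths-cont}. Let $0<\sigma_1<\sigma_2<\cdots$ be the successive jump times of the pure-jump part~$N$. Since the jump rate $\kappa(Y_{t-})$ is bounded by $\|\kappa\|_\infty<\infty$, the number of jumps in any bounded time interval is stochastically dominated by a Poisson random variable, so $Y$ does not explode and $\sigma_n\to+\infty$ almost surely. On each interval $[\sigma_n,\sigma_{n+1})$ the process~$Y$ evolves as the diffusion $\mathrm dX=\mathrm dB+b(X)\,\mathrm dt$; moreover, given the position $Y_{\sigma_n}$ reached right after the $n$-th jump, the clock of rate $\kappa(Y_{t})$ governing the next jump makes $\sigma_{n+1}-\sigma_n$ distributed exactly as the killing time $\tau_\d$ of $X$ started from $Y_{\sigma_n}$, and makes the restriction of $\int_0^{\cdot}\delta_{Y_s}\,\mathrm ds$ to $[\sigma_n,\sigma_{n+1}]$ a conditionally independent copy of the occupation measure $\int_0^{\tau_\d}\delta_{X_s}\,\mathrm ds$ of the killed diffusion started from $Y_{\sigma_n}$. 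Finally, the form $\frac{\kappa(Y_{t-})}{t}\int_0^t\delta_{Y_s-Y_{t-}}\,\mathrm ds$ of the jump measure means that $Y_{\sigma_{n+1}}$ is drawn according to the normalised occupation measure $\frac1{\sigma_{n+1}}\int_0^{\sigma_{n+1}}\delta_{Y_s}\,\mathrm ds$.

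Consequently, setting $m_n:=\int_0^{\sigma_n}\delta_{Y_s}\,\mathrm ds$ for $n\ge 1$, the sequence $(m_n)_{n\ge 1}$ is exactly the MVPP built from $X$ in Proposition~\ref{prop:sample-paths-cont}, without weights ($P_x=\delta_x$) and with ${\cal T}=\delta_{+\infty}$ (so that the path length equals $\tau_\d$), with initial composition $m_1=\int_0^{\sigma_1}\delta_{Y_s}\,\mathrm ds$; the latter is random but satisfies $m_1\cdot V=m_1P\cdot V<\infty$ almost surely, since $\E_y[m_1\cdot V]=R_y\cdot V<\infty$ by the computation in the proof of Proposition~\ref{prop:sample-paths-cont}. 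The hypothesis $\kappa\ge 1$ gives $\P_x(\tau_\d>a)=\E_x\big[\exp(-\int_0^a\kappa(X_s)\,\mathrm ds)\big]\le\mathrm e^{-a}$, hence a uniformly bounded exponential moment for $\tau_\d$, and the standing bound $\limsup_{|x|\to\infty}\langle b(x),x\rangle/|x|<-\tfrac32\|\kappa\|_\infty^{\nicefrac12}$ is precisely the assumption of Proposition~\ref{prop:sample-paths-cont}. That proposition therefore applies and yields, almost surely and for the topology of weak convergence,
\[
\tilde m_n=\frac1{\sigma_n}\int_0^{\sigma_n}\delta_{Y_s}\,\mathrm ds\xrightarrow[n\to\infty]{}\nu_{QSD}.
\]
Since here $R=Q$, the remark following Theorem~\ref{thm:unbal-with-weights} gives in addition that $\sigma_n/n=m_n(E)/n\to\theta_0$ almost surely, for some $\theta_0\in(0,1]$; in particular $\sigma_n\sim\theta_0 n$ and $\sigma_n/\sigma_{n+1}\to1$.

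It remains to pass from this subsequential convergence to convergence as $t\to+\infty$. Because $\sigma_{n+1}-\sigma_n$ is, conditionally on $(Y_s)_{s\le\sigma_n}$, distributed as $\tau_\d$ under $\P_{Y_{\sigma_n}}$, and $\P_x(\tau_\d>a)\le\mathrm e^{-a}$ uniformly in $x$, we have $\P(\sigma_{n+1}-\sigma_n>2\log n)\le n^{-2}$, so the Borel--Cantelli lemma gives $\sigma_{n+1}-\sigma_n\le 2\log n$ for all large~$n$ almost surely; combined with $\sigma_n\sim\theta_0 n$ this yields $(\sigma_{n+1}-\sigma_n)/\sigma_n\to0$ almost surely. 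Now fix a bounded continuous $f:\R^d\to\R$; for $t\ge\sigma_1$ let $n(t)$ be defined by $\sigma_{n(t)}\le t<\sigma_{n(t)+1}$ (so that $n(t)\to\infty$), and write
\[
\frac1t\int_0^t f(Y_s)\,\mathrm ds=\frac{\sigma_{n(t)}}{t}\int f\,\mathrm d\tilde m_{n(t)}+\frac1t\int_{\sigma_{n(t)}}^t f(Y_s)\,\mathrm ds.
\]
The second term is bounded by $\|f\|_\infty\,(\sigma_{n(t)+1}-\sigma_{n(t)})/\sigma_{n(t)}\to0$, whereas $\sigma_{n(t)}/t\in(\sigma_{n(t)}/\sigma_{n(t)+1},1]$ with $\sigma_{n(t)}/\sigma_{n(t)+1}\to1$, so the first term converges almost surely to $\int f\,\mathrm d\nu_{QSD}$. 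Hence $\frac1t\int_0^t\delta_{Y_s}\,\mathrm ds\to\nu_{QSD}$ weakly, almost surely.

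The main obstacle is this last step: one has to combine the linear growth $\sigma_n\sim\theta_0 n$ provided by Theorem~\ref{thm:unbal-with-weights} with the merely logarithmic almost-sure control of the inter-jump increments to render the incomplete last excursion $[\sigma_{n(t)},t)$ negligible, and one has to make sure the identification of $(m_n)$ with the MVPP of Proposition~\ref{prop:sample-paths-cont} is legitimate despite its random (but almost-surely integrable) initial composition.
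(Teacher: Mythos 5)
Your proof is correct and follows essentially the same route as the paper: you identify the occupation measure at the jump times with the continuous-time sample-path MVPP (taking ${\cal T}=\delta_{+\infty}$, using $\kappa\geq 1$ for the uniform exponential moment and noting the a.s.\ finite random initial composition), apply Proposition~\ref{prop:sample-paths-cont} to get convergence along $(\sigma_n)$, and then control the incomplete last excursion to interpolate in $t$. The only differences are cosmetic: the paper couples the inter-jump times with i.i.d.\ exponentials and uses a sandwich for nonnegative $f$, whereas you use Borel--Cantelli with the $2\log n$ bound together with the linear growth $\sigma_n\sim\theta_0 n$ (which the paper leaves implicit).
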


\begin{proof}
  Denote by $0<\tau_1<\tau_2<\ldots$ the jump times of $Y$ and set
  $\tau_0=0$. Then, for all $n\geq 0$ and conditionally on
  $Y_{\tau_n}$,
  \begin{align*}
    \int_{\tau_n}^{\tau_{n+1}} \delta_{Y_s} \, \mathrm ds = R^{\sss (n+1)}_{Y_{\tau_n}},
  \end{align*}
  where $R^{(n+1)}$ is defined as in the proof of
  Proposition~\ref{prop:sample-paths-cont}. Moreover, $Y_{\tau_n}$ is
  distributed according to the probability measure
  $\frac{1}{\tau_n}\int_0^{\tau_n} \delta_{Y_s}\,\mathrm ds$.  As a
  consequence, setting $m_0=\int_0^{\tau_1} \delta_{Y_s}\,\mathrm ds$ (which
  satisfies $m_0\cdot V<+\infty$ almost surely) and
  $m_n:=\frac{1}{\tau_{n+1}}\int_0^{\tau_{n+1}} \delta_{Y_s}\,\mathrm ds$, the
  sequence $(m_n)_{n\in\N}$ has the law of the MVPP of
  Proposition~\ref{prop:sample-paths-cont}. Applying this proposition
  with $T=+\infty$ almost surely (note that $\kappa\geq 1$ implies
  that $\tau_\d\wedge \infty=\tau_\d$ admits a uniformly bounded
  exponential moment), we obtain that
  \begin{align}
    \label{eq:step1-stoc-alg}
    \frac{1}{\tau_n}\int_0^{\tau_n} \delta_{Y_s}\,\mathrm ds\xrightarrow[n\rightarrow+\infty]{a.s.} \nu_{QSD}
  \end{align}
  with respect to the topology of weak convergence.

  Since $\kappa\geq 1$, one can couple the sequence
  $(\tau_{n+1}-\tau_n)_{n\geq 0}$ with a sequence of i.i.d.\ random
  variables $(D_n)_{n\geq 0}$ with exponential law of parameter $1$
  such that $0\leq \tau_{n+1}-\tau_n\leq D_n$ almost surely for all
  $n\geq 0$. Moreover $\tau_n\rightarrow+\infty$ almost surely when
  $n\rightarrow+\infty$ (this is due to the fact that $\kappa$ is
  uniformly bounded). Hence, using~\eqref{eq:step1-stoc-alg}, we get
  \[
    \frac{1}{\tau_n}\int_0^{\tau_{n+1}} \delta_{Y_s}\,\mathrm ds\xrightarrow[n\rightarrow+\infty]{a.s.} \nu_{QSD}
  \]
  and
  \[
    \frac{1}{\tau_{n+1}}\int_0^{\tau_{n}} \delta_{Y_s}\,\mathrm ds\xrightarrow[n\rightarrow+\infty]{a.s.} \nu_{QSD}
  \]

  For all $t\geq 0$, we define
  $\alpha(t):= \sup\{n\geq 0,\ \tau_n\leq t\}$. In particular, for all
  $t\geq 0$, $\alpha(t)<+\infty$,
  $\tau_{\alpha(t)}\leq t <\tau_{\alpha(t)+1}$ and
  $\alpha(t)\to+\infty$ almost
  surely when $t\to+\infty$. 
  As a consequence, for all bounded continuous function
  $f:\R^d\rightarrow[0,+\infty)$,
  \begin{align*}
    \frac{1}{\tau_{\alpha(t)+1}}\int_0^{\tau_{\alpha(t)}} f(Y_s)\,\mathrm ds
    \leq \frac{1}{t}\int_0^t f(Y_s)\,\mathrm ds 
    \leq \frac{1}{\tau_{\alpha(t)}}\int_0^{\tau_{\alpha(t)+1}} f(Y_s)\,\mathrm ds.
  \end{align*}
  This and the above convergence results allow us to conclude the proof.
\end{proof}

\begin{remark}
  Since
  the submission of this paper, Bena\"im, Champagnat \&
  Villemonais~\cite{BCV} proved almost sure convergence of a similar
  stochastic approximation algorithm, where the diffusion process is
  resampled according to its empirical occupation measure when it hits
  the boundary of a bounded domain. On the one hand, their result do
  not apply to the model studied in this section where the state space
  is not bounded; on the other hand, our result do not apply to their
  situation, since Assumption (A1) would fail in that case.
\end{remark}

\section{Proof of Theorem~\ref{thm:unbal-with-weights}}\label{sec:proof}

Let us define an auxiliary sequence of random distributions: 
let $\eta_0 = 0$, and, for all $n\geq 1$,
\begin{align*}
\eta_n=\eta_{n-1}+\delta_{Y_n} = \sum_{i=1}^n\delta_{Y_i}.
\end{align*}
Recall that, by definition,
\[m_n = m_0 + \sum_{i=1}^n R_{Y_i}^{\sss (i)} = m_0 + \sum_{i=1}^n
  \delta_{Y_i}R^{\sss (i)}\] 
  and that, conditionally on the sigma-algebra
$\mathcal F_n$ generated by $\{m_i\}_{0\leq i\leq n}\cup\{Y_i\}_{1\leq i\leq n}$, 
the random variable $Y_{n+1}$ is distributed
according to $m_n P/m_n P(E)$ and $R^{\sss (n+1)}$ is chosen independently
of $\mathcal F_n$ and $Y_{n+1}$.

We set $\tilde\eta_0 = 0$, and, for all $n\geq 1$,
\[\tilde \eta_n = \frac{\eta_n}{\eta_n(E)} = \frac{\eta_n}{n}.\]
We first prove that
$\tilde\eta_n$ converges almost surely weakly to~$\nu$ when $n$ goes to infinity and
then deduce almost-sure convergence of $\tilde m_n$
to~$\nu R/\nu R(E)$:
\begin{proposition}\label{prop:eta}
  Under the Assumptions (T, A1, A'2, A3, A4),
  the sequence $(\tilde\eta_n)_{n\geq 0}$ converges weakly almost surely to~$\nu$ when $n$ goes to infinity.
  Said differently,
  \[\frac1n\sum_{i=1}^n \delta_{Y_i} \to \nu \quad\text{ almost surely when }n\to\infty.\]
\end{proposition}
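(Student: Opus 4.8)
The plan is to recognise $(\tilde\eta_n)_{n\ge0}$ as a stochastic-approximation scheme on $\cP(E)$ and run a suitably adapted ODE method. From $\tilde\eta_{n+1}=\tilde\eta_n+\frac1{n+1}\big(\delta_{Y_{n+1}}-\tilde\eta_n\big)$ and the fact that, given $\cF_n$, $Y_{n+1}$ has law $m_nP/m_nP(E)$, I would write
\[
  \tilde\eta_{n+1}=\tilde\eta_n+\tfrac1{n+1}\big(h(\tilde\eta_n)+U_{n+1}+\varepsilon_{n+1}\big),\qquad h(\mu):=\frac{\mu Q}{\mu Q(E)}-\mu,
\]
where $U_{n+1}=\delta_{Y_{n+1}}-\E[\delta_{Y_{n+1}}\mid\cF_n]$ is a martingale increment and $\varepsilon_{n+1}=\frac{m_nP}{m_nP(E)}-\frac{\tilde\eta_nQ}{\tilde\eta_nQ(E)}$ a bias term. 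The noise is harmless because $\delta_{Y_{n+1}}\cdot f$ is bounded for bounded $f$, so $\sum_n\frac1n U_{n+1}\cdot f$ converges a.s.\ by an $L^2$ martingale argument for each fixed bounded continuous $f$. The bias vanishes once one knows (i) $m_nP(E)/n$ stays bounded away from $0$, and (ii) $\frac1n\big(m_nP-n\,\tilde\eta_nQ\big)=\frac1n\big(m_0P+\sum_{i=1}^n\delta_{Y_i}(Q^{\sss(i)}-Q)\big)\to 0$; here (ii) holds because $\sum_i\delta_{Y_i}(Q^{\sss(i)}-Q)$ is a martingale whose increments against test functions are controlled in $L^p$ by Assumption~(A'2-iii), so the strong law for martingales applies provided $\frac1n\sum_{i=1}^n V(Y_i)$ does not grow too fast.

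Next I would identify the limiting flow. Differentiating $\bar v_t:=\mu_0P_t/\mu_0P_t(E)$, where $(P_t)_{t\ge0}$ is the sub-Markovian semigroup with generator $Q-I$, gives $\dot{\bar v}_t=\bar v_tQ-(\bar v_tQ(E))\,\bar v_t=(\bar v_tQ(E))\,h(\bar v_t)$; since $\bar v_tQ(E)\in[c_1,1]$ by Assumption~(A1), the orbits of $\dot\mu=h(\mu)$ are exactly those of $t\mapsto\mu_0P_t/\mu_0P_t(E)=\mathbb P_{\mu_0}(X_t\in\cdot\mid X_t\neq\partial)$ up to a bounded time change. Thus Assumption~(A3) says precisely that $\nu$ is a global attractor of the semiflow generated by $h$, with convergence uniform over $\{\alpha\cdot V^{\nicefrac1q}\le C\}$; this is the dynamical-systems input that will force $\tilde\eta_n\to\nu$.

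The main obstacle — and the genuinely new ingredient — is tightness: I must show $\sup_n\tilde\eta_n\cdot V^{\nicefrac1q}<\infty$ almost surely (equivalently the $m_nP\cdot V^{\nicefrac1q}/n$ bound of Theorem~\ref{thm:unbal-with-weights}), which both confines the scheme to a weakly compact subset of $\cP(E)$ via the compact sublevel sets of $V^{\nicefrac1q}$ (Assumption~(A'2-i)) and closes the circularity above. The idea is a Foster--Lyapunov argument at the level of the algorithm: from $\tilde\eta_{n+1}\cdot V^{\nicefrac1q}=\tilde\eta_n\cdot V^{\nicefrac1q}+\frac1{n+1}\big(V^{\nicefrac1q}(Y_{n+1})-\tilde\eta_n\cdot V^{\nicefrac1q}\big)$, $\E[V^{\nicefrac1q}(Y_{n+1})\mid\cF_n]=m_nP\cdot V^{\nicefrac1q}/m_nP(E)$, the drift bound $Q_x\cdot V^{\nicefrac1q}\le\theta\,V^{\nicefrac1q}(x)+K$ and $\theta<c_1\le Q_x(E)$ from (A'2-ii) and (A1), one obtains that $\tilde\eta_n\cdot V^{\nicefrac1q}$ has a conditional drift of the contracting form $\frac1{n+1}\big((\tfrac{\theta}{c_1}-1)\tilde\eta_n\cdot V^{\nicefrac1q}+O(1)\big)$ plus errors; the discrepancy between $m_nP\cdot V^{\nicefrac1q}$ and $n\,\tilde\eta_nQ\cdot V^{\nicefrac1q}$ is absorbed by the martingale $\sum_i\delta_{Y_i}(Q^{\sss(i)}\cdot V^{\nicefrac1q}-Q\cdot V^{\nicefrac1q})$, whose increments are controlled by (A'2-iv), and the lower bound on $m_nP(E)/n$ follows from the stochastic-domination coupling with $\mu$ in (A1) (cf.\ Lemma~\ref{lem:cv_sigma_k}). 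A standard stochastic-approximation stability argument (Robbins--Siegmund, or a.s.\ boundedness from a Lyapunov inequality) then upgrades the drift into the claimed a.s.\ bound, after first bootstrapping a cruder control on $\tilde\eta_n\cdot V$.

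With tightness in hand I would invoke the ODE (asymptotic-pseudotrajectory) method: the interpolated process of $(\tilde\eta_n)$, on the time scale $t_n=\sum_{k\le n}\frac1k\sim\log n$, is an asymptotic pseudotrajectory of the semiflow of $h$ (using the vanishing of noise and bias above), so its a.s.\ nonempty, compact, connected limit set is internally chain transitive and hence, by the global-attractor property from Assumption~(A3), reduces to $\{\nu\}$. One technical point to dispatch is that weak convergence is not metrised by a norm, so the pseudotrajectory property is verified through a fixed countable convergence-determining family of bounded continuous functions, with Assumption~(A4) ensuring $\mu\mapsto h(\mu)\cdot f$ behaves well along the tight set produced in the previous step. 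Combining this with tightness yields $\tilde\eta_n\Rightarrow\nu$ almost surely, which is Proposition~\ref{prop:eta}.
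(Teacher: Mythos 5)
Your proposal is correct and follows essentially the same route as the paper's proof: the same localization through the event $m_nP(E)\geq c'n$ (the stopping times of Lemma~\ref{lem:cv_sigma_k}), the same two-level Lyapunov scheme (moment control at level $V$, then Kushner--Clark/a.s.\ boundedness at level $V^{1/q}$ giving compact confinement in $\cP_C(E)$), the same identification of the limiting dynamics with the conditioned semigroup $\mathbb P_{\mu_0}(X_t\in\cdot\mid X_t\neq\partial)$, and the same conclusion via the asymptotic-pseudotrajectory framework with a countable convergence-determining family and Assumption~(A3). The only deviation is cosmetic: you take step size $1/(n+1)$ with drift $h(\mu)=\mu Q/\mu Q(E)-\mu$, whereas the paper uses the step $\gamma_{n+1}=1/(\eta_{n+1}(E)\tilde\eta_nQ(E))$ with drift $F(\mu)=\mu Q-\mu Q(E)\mu$; as you note, this is a bounded time change ($\mu Q(E)\in[c_1,1]$) and the two flows have the same orbits and attractor.
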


\subsection{Proof of Proposition~\ref{prop:eta}}
We consider the dynamical system 
defined by
\begin{align}
\label{eq:dyn-sys}
\frac{d\mu_t\cdot f}{dt}=\mu_t Q \cdot f -\mu_t Q(E)\,\mu_t \cdot f,
\end{align}
for all bounded continuous functions $f:E\rightarrow\R$,
  where $(\mu_t)_{t\geq 0}$ shall not depend on $f$. Existence, uniqueness and
continuity properties of the flow induced by this dynamical system are
stated and proved in Lemma~\ref{lem:flow}.

To prove almost-sure convergence of $\tilde\eta_n$ to $\nu$
(i.e. Proposition~\ref{prop:eta}), we prove that a linearization of it
is a \textit{pseudo-asymptotic trajectory} (see Section~3
of~\cite{Benaim1999}) of the semi-flow induced by the dynamical
system~\eqref{eq:dyn-sys}.  To do so, we need to prove several
intermediate results: In Lemma~\ref{lem:algo_sto}, we write down the
studied stochastic algorithm. In Lemma~\ref{lem:C}, we prove that the
expectation of $V$ with respect to the measure-valued process remains
bounded. In Lemma~\ref{lem:U}, we prove almost-sure convergence of the
quantity introduced in Proposition~4.1 of~\cite{Benaim1999} to control
the error term between the dynamical system~\eqref{eq:dyn-sys} and its
linearized counterpart (the almost-sure convergence of this error to
zero is sometimes called the Kushner \& Clark's condition).  In
Lemma~\ref{lem:compacity}, we prove that the sequence
$(\tilde\eta_n)_n$ is relatively compact for the topology of weak
convergence on $\cP(E)$. All these elements allow us to conclude the
proof of Proposition~\ref{prop:eta} using standard
stochastic-approximation methods, as developed
in~\cite{BenaimLedouxEtAl2002}.

From now on, we assume that all the hypotheses of Proposition~\ref{prop:eta} hold.
\begin{lemma}\label{lem:algo_sto}
 For all $n\geq 1$, we have
\[\tilde{\eta}_{n+1}-\tilde \eta_n = \gamma_{n+1}\Big(F(\tilde \eta_n) + U_{n+1}\Big),\]
where
\[\gamma_{n+1}=\frac{1}{\eta_{n+1}(E)\tilde{\eta}_n Q(E)},\]
and
\begin{align*}
F(\tilde{\eta}_n)&=\tilde{\eta}_n Q-\tilde{\eta}_n Q(E)\,\tilde{\eta}_n,\\
U_{n+1}&=\tilde{\eta}_n Q(E)\delta_{Y_{n+1}}-\tilde{\eta}_nQ.
\end{align*}
\end{lemma}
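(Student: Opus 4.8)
The plan is a direct computation: this lemma is nothing more than an algebraic rewriting of the one-step increment of $\tilde\eta_n$, so there is no real obstacle, only careful bookkeeping.

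First I would record the elementary recursion for $\tilde\eta_n$. Since $\eta_{n+1}=\eta_n+\delta_{Y_{n+1}}$ and $\eta_n(E)=n$ for all $n\geq 1$, testing against an arbitrary bounded measurable $f$ gives
\[
\tilde\eta_{n+1}-\tilde\eta_n=\frac{\eta_n+\delta_{Y_{n+1}}}{n+1}-\frac{\eta_n}{n}
=\frac{1}{n+1}\Big(\delta_{Y_{n+1}}-\frac{\eta_n}{n}\Big)
=\frac{1}{n+1}\big(\delta_{Y_{n+1}}-\tilde\eta_n\big).
\]
Note this already holds for $n\geq 1$, the range for which $\tilde\eta_n=\eta_n/n$ is a genuine probability measure.

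Next I would simplify the target expression using the definitions in the statement: the two occurrences of $\tilde\eta_n Q$ in $F(\tilde\eta_n)+U_{n+1}$ cancel, leaving
\[
F(\tilde\eta_n)+U_{n+1}
=\big(\tilde\eta_n Q-\tilde\eta_n Q(E)\,\tilde\eta_n\big)+\big(\tilde\eta_n Q(E)\,\delta_{Y_{n+1}}-\tilde\eta_n Q\big)
=\tilde\eta_n Q(E)\,\big(\delta_{Y_{n+1}}-\tilde\eta_n\big).
\]
Since $\eta_{n+1}(E)=n+1$, the definition of $\gamma_{n+1}$ yields $\gamma_{n+1}\,\tilde\eta_n Q(E)=\tfrac1{n+1}$; multiplying the previous display by $\gamma_{n+1}$ and comparing with the recursion above proves the claimed identity. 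The only point needing a word of justification is that $\gamma_{n+1}$ is well-defined, i.e.\ $\tilde\eta_n Q(E)=\int_E Q_x(E)\,\mathrm d\tilde\eta_n(x)\neq 0$; this is immediate from Assumption~(A1), under which $\tilde\eta_n Q(E)\in[c_1,1]$ with $c_1>0$. The purpose of the lemma is then to have recast $(\tilde\eta_n)$ in the standard Robbins--Monro form $\tilde\eta_{n+1}-\tilde\eta_n=\gamma_{n+1}(F(\tilde\eta_n)+U_{n+1})$, with a mean-field term $F$ vanishing at $\nu$ (because $\nu Q=\nu Q(E)\,\nu$ by quasi-stationarity) and a noise term $U_{n+1}$ that will be controlled in the subsequent lemmas.
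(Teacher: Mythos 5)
Your proposal is correct and follows essentially the same route as the paper: the paper's proof is exactly the one-line recursion $\tilde\eta_{n+1}=\tilde\eta_n+\frac{1}{n+1}(\delta_{Y_{n+1}}-\tilde\eta_n)$, from which the identity follows by the same algebraic cancellation you carry out. Your extra remark that $\tilde\eta_n Q(E)\geq c_1>0$ (so $\gamma_{n+1}$ is well defined) is a sensible addition but not a difference in method.
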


{ The term $\gamma_{n+1}$ may be interpreted as the step size of a stochastic
Euler scheme approximation of Equation~\eqref{eq:dyn-sys} and it decreases to $0$ when $n\rightarrow+\infty$. For instance, in the simple
case where $Q(E)=1$, $\gamma_{n+1}$ equals $\nicefrac{1}{(n+1)}$.}

\begin{proof}
The result directly follows from
\[
\tilde{\eta}_{n+1}
=\left(1-\frac1{n+1}\right)\tilde \eta_n  + \frac1{n+1}\delta_{Y_{n+1}}
=\tilde \eta_n + \frac{1}{n+1}\big(\delta_{Y_{n+1}} - \tilde \eta_n\big).\qedhere
\]
\end{proof}

\begin{lemma}\label{lem:cv_sigma_k}
Fix $c'\in(\theta,c_1)$, for all $k\geq 1$, we let
\begin{equation}\label{eq:def_sigma_k}
\sigma_k = \inf\big\{n\geq k \colon m_nP(E)<c' n\big\}.
\end{equation}
We have $\mathbb P\big(\cup_{k\geq 1} \{\sigma_k = \infty\}\big) = 1$.
\end{lemma}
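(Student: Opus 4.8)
The plan is to turn the statement into the almost-sure lower bound $\liminf_n m_nP(E)/n>c'$ and to obtain this bound by a monotone coupling together with the strong law of large numbers. First I would record the mass recursion: applying the kernel $P$ to $m_{n+1}=m_n+R^{\sss (n+1)}_{Y_{n+1}}$ and evaluating at $E$ gives $m_{n+1}P(E)=m_nP(E)+Q^{\sss (n+1)}_{Y_{n+1}}(E)$, hence
\[
m_nP(E)=m_0P(E)+\sum_{i=1}^{n}Q^{\sss (i)}_{Y_i}(E),
\]
where $m_0P(E)\le m_0P\cdot V<\infty$ and each increment is a.s.\ finite since $\E[Q^{\sss (1)}_x(E)^p]\le AV(x)<\infty$ and $V$ is finite-valued. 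It therefore suffices to prove that almost surely $\liminf_n m_nP(E)/n>c'$; indeed, on that event there is a (random) $k$ with $m_nP(E)>c'n$ for all $n\ge k$, i.e.\ $\sigma_k=\infty$.

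The key step is a coupling. Conditionally on $\mathcal F_n$ and on $Y_{n+1}$, the random kernel $R^{\sss (n+1)}$ is independent of $(\mathcal F_n,Y_{n+1})$, so the conditional law of $Q^{\sss (n+1)}_{Y_{n+1}}(E)$ is the law of $Q^{\sss (1)}_x(E)$ evaluated at $x=Y_{n+1}$, which by Assumption~(A1) stochastically dominates $\mu$. Using the standard monotone coupling for stochastic domination (the quantile, or Strassen, coupling), and enlarging the probability space with an auxiliary sequence of independent uniform random variables $(W_i)_{i\ge1}$, with $W_i$ independent of everything produced up to step $i-1$, I would construct i.i.d.\ random variables $(\zeta_i)_{i\ge1}$ of common law $\mu$ such that $\zeta_i\le Q^{\sss (i)}_{Y_i}(E)$ almost surely for every $i\ge1$. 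The point to check is that, conditionally on $\mathcal F_{i-1}$ and $Y_i$, the variable $\zeta_i$ still has law $\mu$ (it is built from $W_i$, which is independent of that data), so that the $\zeta_i$'s are genuinely i.i.d.\ and not merely conditionally dominating; here one also uses that $x\mapsto\mathrm{Law}\big(Q^{\sss (1)}_x(E)\big)$ is a measurable family of probability measures on $\R$, so the coupling may be chosen jointly measurable in the parameter $x=Y_i$.

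Granting the coupling, $m_nP(E)\ge m_0P(E)+\sum_{i=1}^n\zeta_i\ge\sum_{i=1}^n\zeta_i$ for all $n$, and the strong law of large numbers gives $\tfrac1n\sum_{i=1}^n\zeta_i\to c_1$ almost surely, where $c_1>0$ is the mean of $\mu$. Since $c_1>c'$, it follows that almost surely $\liminf_n m_nP(E)/n\ge c_1>c'$, hence almost surely there exists $k\ge1$ with $\sigma_k=\infty$, which is exactly $\mathbb P\big(\bigcup_{k\ge1}\{\sigma_k=\infty\}\big)=1$.

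I expect the only genuinely delicate point to be the coupling — simultaneously preserving the i.i.d.\ structure of the dominating variables $(\zeta_i)$, dominating the data-dependent increments $Q^{\sss (i)}_{Y_i}(E)$, and handling the measurable dependence on $Y_i$; everything else is the elementary mass recursion and one application of the strong law. When $Q^{\sss (i)}_x(E)$ may take negative values, so that $\mu$ cannot be taken supported on $[0,\infty)$, one instead works under Assumption~\eqref{eq:alt-A1}: there $M_n:=\sum_{i=1}^n\big(Q^{\sss (i)}_{Y_i}(E)-Q_{Y_i}(E)\big)$ is a martingale whose increments are uniformly bounded in $L^\beta$ for some $\beta>1$, so $M_n/n\to0$ almost surely by the $L^\beta$ law of large numbers for martingales, while $\tfrac1n\sum_{i=1}^nQ_{Y_i}(E)\ge c_1$ by the uniform lower bound in~\eqref{eq:alt-A1}; combining these again yields $\liminf_n m_nP(E)/n\ge c_1>c'$.
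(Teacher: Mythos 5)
Your proposal is correct and follows essentially the same route as the paper: the same mass recursion $m_nP(E)=m_0P(E)+\sum_{i=1}^n Q^{\sss (i)}_{Y_i}(E)$, the same coupling with an i.i.d.\ sequence of law $\mu$ dominated by the increments (you merely spell out the quantile-coupling and measurability details that the paper leaves implicit), and the strong law of large numbers to get $\liminf_n m_nP(E)/n\geq c_1>c'$. Your final paragraph treating Assumption~\eqref{eq:alt-A1} via the martingale $M_n$ and an $L^\beta$ law of large numbers also matches the paper's second proof of the lemma in that alternative setting.
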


  \begin{proof}
    Recall that $m_nP(E) = m_0P(E) + \sum_{i=1}^n R^{\sss (i)}_{Y_i}P(E)$, and, therefore,
    \[m_nP(E) = m_0P(E) + \sum_{i=1}^n Q^{\sss (i)}_{Y_i}(E).\]
    Assumption~(A1) and, conditionally on $Y_1,\ldots, Y_n,\ldots$, the
    independence of the random variables $Q^{\sss (i)}_{Y_i}(E)$
    entails (by coupling) that there exists a sequence of independent random
    variables $Z_1,\ldots,Z_n,\ldots$ with law $\mu$ such that,
    conditionally on $Y_1,\ldots, Y_n,\ldots$, we have
    $Q^{\sss (i)}_{Y_i}(E)\geq Z_i$ for all $i\geq 1$. 
    The law of
    large numbers hence implies that
    \[\liminf_{n\rightarrow+\infty} \frac{m_n P(E)}{n} \geq c_1\quad\text{almost surely},\]
    which concludes the proof.
  \end{proof}

We claimed that Assumption (A1) can be replaced by Equation~\eqref{eq:alt-A1} in Theorem~\ref{thm:unbal-with-weights}, to prove this claim, we need to prove Lemma~\ref{lem:cv_sigma_k} in this alternative setting:
\begin{proof}[Proof of Lemma~\ref{lem:cv_sigma_k} with Assumption~(A1) replaced by~\eqref{eq:alt-A1}]
Recall that \[m_nP(E) = m_0P(E) + \sum_{i=1}^n R^{\sss (i)}_{Y_i}P(E),\] and, therefore,
\[m_nP(E) = m_0P(E) + \sum_{i=1}^n \mathbb E_{i-1}Q^{\sss (i)}_{Y_i}(E)
+ \sum_{i=1}^n \big(Q^{\sss (i)}_{Y_i}(E) - \mathbb E_{i-1}Q^{\sss (i)}_{Y_i}(E)\big),\]
where $\mathbb E_{i-1}$ denotes the expectation conditionally on $(m_1, \ldots, m_{i-1})$.
Note that, since $Q^{\sss (i)}$ is independent from $\cF_{i-1}$ and $Y_i$, we have
\begin{equation}\label{eq:lower_bound_mP}
\sum_{i=1}^n \mathbb E_{i-1}Q^{\sss (i)}_{Y_i}(E)
= \sum_{i=1}^n \mathbb E_{i-1} Q_{Y_i}(E) \geq c_1 n,
\end{equation}
by Assumption (A1).
Also note that 
\[M_n:=\sum_{i=1}^n \big(Q^{\sss (i)}_{Y_i}(E) - \mathbb
  E_{i-1}Q^{\sss (i)}_{Y_i}(E)\big)\] is a martingale. Using Lemma~1
in~\cite{Chatterji1969} (without loss of generality, we assume that
$\beta\in(1,2]$), one deduces from Assumption~\eqref{eq:alt-A1} that
\begin{align*}
  \E|M_n|^\beta
  &\leq  2\,\sum_{i=1}^n \mathbb E_{i-1}\big|Q^{\sss (i)}_{Y_i}(E) - \mathbb E_{i-1}Q^{\sss (i)}_{Y_i}(E)\big|^\beta\\
  &\leq 2 n \sup_{x\in E}\, \E\left|Q_x^{(i)}(E)-Q_x(E)\right|^{\beta}.
\end{align*}
Hence, using~\eqref{eq:alt-A1}, we get that the sequence
$(n^{-1} \mathbb E \left|M_n\right|^\beta)_{n\geq 1}$ is bounded. This
implies, by an immediate adaptation of Theorem~1.3.17 in~\cite{Duflo}
(the main point is to use Doob's inequality instead of Kolmogorov's
inequality), that $n^{-1}M_n$ goes almost surely to zero when $n$ goes
to infinity.

Therefore, we have that, almost surely when $n\to+\infty$,
\[m_nP(E) = \sum_{i=1}^n \mathbb E_{i-1}Q^{\sss (i)}_{Y_i}(E)+o(n),\]
and, using Equation~\eqref{eq:lower_bound_mP}, we get
\[m_nP(E) \geq c_1 n +o(n) \text{ almost surely},\]
which concludes the proof because $c'<c_1$.
\end{proof}

\begin{lemma}\label{lem:C}
For all $k\geq 1$, there exists a constant $C_k>0$  such that
\[\sup_{n\geq 1} \mathbb E\left[\frac{\eta_{n\wedge\sigma_k} \cdot V}{n}\right] \,\vee\, 
\sup_{n\geq 1}\E\left[\frac{m_{n\wedge\sigma_k}P\cdot V}{n}\right]\,\vee\,
\sup_{n\geq 1}\E\big[V(Y_{n+1})\1_{n<\sigma_k}\big]\leq C_k.\]
\end{lemma}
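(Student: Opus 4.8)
The plan is to obtain all three bounds simultaneously from a single supermartingale-type estimate controlling the growth of $m_nP\cdot V$ along the stopped process. First I would observe that, on the event $\{n<\sigma_k\}$, we have $m_nP(E)\geq c'n$, so that the probability law of $Y_{n+1}$ given $\mathcal F_n$, namely $m_nP/m_nP(E)$, is a genuine probability measure with a controlled normalisation; consequently $\E[V(Y_{n+1})\,|\,\mathcal F_n]\1_{n<\sigma_k}=\frac{m_nP\cdot V}{m_nP(E)}\1_{n<\sigma_k}\leq \frac{m_nP\cdot V}{c'n}\1_{n<\sigma_k}$. This already shows that the third quantity is dominated by (a constant times) the second, so it suffices to bound $\E[m_{n\wedge\sigma_k}P\cdot V/n]$; and since $\eta_n\cdot V\leq$ (something comparable to) $m_nP\cdot V$ is \emph{not} literally true, one instead bounds $\eta_{n\wedge\sigma_k}\cdot V$ directly by the same recursion, using that $\eta_{n+1}\cdot V=\eta_n\cdot V+V(Y_{n+1})$ and the bound on $\E[V(Y_{n+1})\,|\,\mathcal F_n]$ just derived. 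So the heart of the matter is a recursive estimate for $u_n:=\E[m_{n\wedge\sigma_k}P\cdot V]$.

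For that recursion I would write, using $m_{n+1}P\cdot V=m_nP\cdot V+R^{\sss(n+1)}_{Y_{n+1}}P\cdot V=m_nP\cdot V+Q^{\sss(n+1)}_{Y_{n+1}}\cdot V$ and the independence of $Q^{\sss(n+1)}$ from $\mathcal F_n$ and $Y_{n+1}$,
\[
\E\big[m_{(n+1)\wedge\sigma_k}P\cdot V\,\big|\,\mathcal F_n\big]\1_{n<\sigma_k}
=\Big(m_nP\cdot V+\E\big[Q_{Y_{n+1}}\cdot V\,\big|\,\mathcal F_n\big]\Big)\1_{n<\sigma_k}.
\]
By Assumption (A'2-ii), $Q_x\cdot V\leq\theta V(x)+K$, so $\E[Q_{Y_{n+1}}\cdot V\,|\,\mathcal F_n]=\frac{m_nP\cdot(Q\cdot V)}{m_nP(E)}\leq\frac{\theta\, m_nP\cdot V+K\,m_nP(E)}{m_nP(E)}$. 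On $\{n<\sigma_k\}$ the second term is at most $K$; for the first term I use $m_nP(E)\geq c'n$ together with the already-established uniform bound $m_nP(E)\le m_0P(E)+\kappa n\le C n$ (since $\sup_xQ_x(E)\le1$) to conclude $\frac{\theta\,m_nP\cdot V}{m_nP(E)}\le \frac{\theta}{c'}\cdot\frac{m_nP\cdot V}{n}\cdot\frac{n}{1}$, i.e. the increment of $m_nP\cdot V$ is at most $\big(\frac{\theta}{c'}\cdot\frac1n\big)m_nP\cdot V+K$ in conditional expectation on $\{n<\sigma_k\}$. Since $\theta<c'$, writing $\rho:=\theta/c'\in(0,1)$ this gives $u_{n+1}\le\big(1+\frac{\rho}{n}\big)u_n+K$, and a standard discrete Gronwall argument (the product $\prod_{j=k}^{n}(1+\rho/j)$ grows like $n^{\rho}$ with $\rho<1$) yields $u_n\le C_k\,n$ for a constant $C_k$ depending on $k$, $m_0P\cdot V$, $\theta$, $c'$, $K$. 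Feeding this back gives $\E[\eta_{n\wedge\sigma_k}\cdot V/n]\le C_k$ and $\E[V(Y_{n+1})\1_{n<\sigma_k}]\le C_k/c'$, after possibly enlarging $C_k$.

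The main obstacle, and the point requiring care, is the handling of the stopping time $\sigma_k$ and the fact that the favourable lower bound $m_nP(E)\geq c'n$ is only available \emph{on} $\{n<\sigma_k\}$: one must be careful to multiply by $\1_{n<\sigma_k}$ \emph{before} taking conditional expectations, to use that $\{n<\sigma_k\}\in\mathcal F_n$, and to note $m_{(n+1)\wedge\sigma_k}P\cdot V=m_{n\wedge\sigma_k}P\cdot V$ on $\{n\ge\sigma_k\}$ so that the recursion for $u_n$ closes. A secondary technical point is that I only used (A'2-ii) and the mass bound $Q_x(E)\le1$ here; the higher-moment assumptions (A'2-iii,iv) and the choice of $p,q$ are not needed for this lemma, which is why it holds already under (A1, A'2). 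Everything else is the routine Gronwall estimate sketched above.
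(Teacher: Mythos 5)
Your proof is correct, and it rests on exactly the same two ingredients as the paper's argument --- the lower bound $m_nP(E)\geq c'n$ on the $\mathcal F_n$-measurable event $\{n<\sigma_k\}$, and the drift condition $Q_x\cdot V\leq\theta V(x)+K$ with $\theta<c'$ --- but the bookkeeping is genuinely different. The paper runs a \emph{contractive} recursion on $a_n:=\E\big[\eta_{n\wedge\sigma_k}\cdot V/n\big]$: it bounds $\E[V(Y_{n+1})\1_{n<\sigma_k}]\leq \E[m_{n\wedge\sigma_k}P\cdot V]/(c'n)\leq \big(m_0P\cdot V+nK+\theta\,\E[\eta_{n\wedge\sigma_k}\cdot V]\big)/(c'n)$ and feeds this into $a_{n+1}=(1-\tfrac1{n+1})a_n+\tfrac1{n+1}\E[V(Y_{n+1})\1_{n<\sigma_k}]$, obtaining $a_{n+1}\leq\big(1-\tfrac{1-\theta/c'}{n+1}\big)a_n+O(1/n)$, so that boundedness of all three quantities drops out of the intermediate inequalities. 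You instead track $u_n:=\E[m_{n\wedge\sigma_k}P\cdot V]$ directly, get $u_{n+1}\leq(1+\tfrac{\rho}{n})u_n+K$ with $\rho=\theta/c'<1$, and use a discrete Gronwall estimate ($\prod_j(1+\rho/j)\sim n^{\rho}$) to conclude $u_n\leq C_k n$, then deduce the bounds on $\E[V(Y_{n+1})\1_{n<\sigma_k}]$ and on $\E[\eta_{n\wedge\sigma_k}\cdot V]$ from it; this gives a more transparent ``subcritical growth'' statement at the price of the $n^{\rho}$ product estimate, where the paper gets uniform boundedness for free from the contraction. Two small blemishes, neither fatal: (i) the pathwise upper bound $m_nP(E)\leq m_0P(E)+\kappa n$ you invoke is not justified (only the means satisfy $Q_x(E)\leq 1$; the realized masses $Q^{\sss(i)}_{Y_i}(E)$ need not be bounded, cf.\ the sample-path examples), but it is also not needed, since the lower bound alone gives $\theta\,m_nP\cdot V/m_nP(E)\leq(\theta/c')\,m_nP\cdot V/n$ on $\{n<\sigma_k\}$; (ii) your recursion is only valid for $n\geq k$ (for $n<k$ the event $\{n<\sigma_k\}$ has full probability but carries no mass lower bound), so to start the Gronwall iteration at $u_k$ and to cover $\sup_{n\geq 1}$ you must separately note that the finitely many initial terms are finite --- the paper disposes of this with a one-line remark, and you should state it explicitly as well.
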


\begin{proof} Fix $n\geq k+1$,
We have
\begin{equation}\label{eq:exp-cont}
\E\left[\frac{{\eta}_{(n+1)\wedge \sigma_k}\cdot V}{n+1}\right]
=\left(1-\frac1{n+1}\right)\mathbb E\left[\frac{\eta_{n\wedge \sigma_k}\cdot V}{n}\right]
+\frac{\E \big[V(Y_{n+1})\1_{n<\sigma_k}\big]
}{n+1}
\end{equation}
Note that, by definition of $\sigma_k$ (see Equation~\eqref{eq:def_sigma_k}),
we have, almost surely and for all $n\in\{k+1,\ldots,\sigma_k-1\}$,
\[
  m_nP(E)  \geq c'n.
\]
Hence, by definition of $Y_{n+1}$, we have (recall that $m_n$, and thus $m_nP$, is
assumed to be a positive measure almost surely), for all $n\geq k+1$,
\begin{align*}
\E\left[V(Y_{n+1})\1 _{n<\sigma_k}\right]
  &=\E\left[\frac{m_n P\cdot V}{m_n P(E)}\,\1_{n<\sigma_k} \right]\leq \frac{1}{c'n}\E\left[m_{n\wedge\sigma_k} P\cdot V \right]\\
  &= \frac{1}{c'n}\,\E\left[m_0P\cdot V+\sum_{i=1}^{n\wedge\sigma_k} Q_{Y_i}^{\sss (i)}\cdot V\right]\\
  &\leq \frac{1}{c'n}\,\E\left[m_0P\cdot V+\sum_{i=1}^{n} Q_{Y_i}^{\sss (i)}\cdot V\,\1_{i\leq \sigma_k}\right]\\
  &=\frac{1}{c'n}\,\E\left[m_0P\cdot V+\sum_{i=1}^{n} Q_{Y_i}\cdot V\,\1_{i\leq \sigma_k}\right]
\end{align*}
where the last equality is obtained by conditioning on
$\mathcal F_{i-1}$ and $Y_i$, and using the fact that
$\1_{i\leq \sigma_k}$ is measurable with respect to
 $\mathcal F_i \cup \sigma(Y_i)$ and that $Q^{\sss (i)}$ is independent of
$\mathcal F_i \cup \sigma(Y_i)$.  We thus get, using the
Lyapunov assumption (A'2-i) in the second inequality,
\begin{align}
  \E\left[V(Y_{n+1})\1_{n<\sigma_k}\right]
  &\leq \frac{1}{c'n}\E\left[m_{n\wedge\sigma_k} P\cdot V \right]\label{eq:unif_bound} \\
  &\leq \frac{m_0 P\cdot V+\E[\eta_{n\wedge\sigma_k}Q\cdot V]}{c'n} \nonumber   \\
  &\leq \frac{m_0 P\cdot V + nK + \theta \E\left[\eta_{n\wedge\sigma_k}\cdot V\right]}{c'n}\nonumber\\
  &\leq  \frac{m_0 P\cdot V+nK}{c'n} + \frac{\theta}{c'} \E\left[\frac{\eta_{n\wedge\sigma_k}\cdot V}{n}\right] .\label{eq:unif_bound_bis}
\end{align}
Thus, using Equation~\eqref{eq:exp-cont}, we get, for all $n\geq k+1$,
\[
  \E\left[\frac{{\eta}_{(n+1)\wedge\sigma_k}\cdot V}{n+1}\right] \leq
  \left(1-\frac{1-\nicefrac{\theta}{c'}}{n+1}\right)\E\left[\frac{{\eta}_{n\wedge\sigma_k}\cdot
    V}{n}\right] + \frac{m_0P\cdot V + nK}{c'n(n+1)}.\] One easily checks
that $\E[\tilde{\eta}_{n\wedge\sigma_k}\cdot V]<+\infty$ for all
$n\leq k$ and, since we assumed that $m_0 P\cdot V<+\infty$ and since
$\theta<c'<1$, we can infer that
$\E[\eta_{n\wedge\sigma_k}\cdot V/n]$ is uniformly bounded in $n$.
Finally, the inequality between~\eqref{eq:unif_bound} and
  \eqref{eq:unif_bound_bis} implies that both
  $\E\left[{m_{n\wedge\sigma_k}P\cdot V}/{n}\right]$ and
  $ \E\left[V(Y_{n+1})\1_{n<\sigma_k}\right]$ are also uniformly
  bounded in $n$.
  \end{proof}

\begin{lemma}[Kushner \& Clark's condition]\label{lem:U}
Set $W = V^{\nicefrac1q}$.
Almost surely~~$\lim_{n\to+\infty} \sum_{\ell=1}^{n} \gamma_{\ell} U_{\ell}\cdot W$~~exists and is finite.
\end{lemma}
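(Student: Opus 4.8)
The plan is to write $\gamma_{\ell+1}U_{\ell+1}\cdot W$ explicitly, split it into a martingale increment plus a ``bias'' term that is $\cF_\ell$-measurable, and treat the two separately, using throughout the stopping times $\sigma_k$ of Lemma~\ref{lem:cv_sigma_k}, the moment bounds of Lemma~\ref{lem:C}, and Assumption~(A'2), and concluding via $\P(\cup_k\{\sigma_k=\infty\})=1$. By Lemma~\ref{lem:algo_sto},
\[
  \gamma_{\ell+1}U_{\ell+1}\cdot W
  =\frac{1}{\ell+1}\Big(W(Y_{\ell+1})-\frac{\eta_\ell Q\cdot W}{\eta_\ell Q(E)}\Big),
\]
and, since conditionally on $\cF_\ell$ the variable $Y_{\ell+1}$ has law $m_\ell P/m_\ell P(E)$, this equals $\Xi_{\ell+1}+\beta_{\ell+1}$ where
\[
  \Xi_{\ell+1}:=\frac{1}{\ell+1}\Big(W(Y_{\ell+1})-\frac{m_\ell P\cdot W}{m_\ell P(E)}\Big),
  \qquad
  \beta_{\ell+1}:=\frac{1}{\ell+1}\Big(\frac{m_\ell P\cdot W}{m_\ell P(E)}-\frac{\eta_\ell Q\cdot W}{\eta_\ell Q(E)}\Big),
\]
with $\E[\Xi_{\ell+1}\mid\cF_\ell]=0$. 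I will prove that $\sum_{\ell}\Xi_{\ell+1}$ converges almost surely and that $\sum_{\ell}|\beta_{\ell+1}|<\infty$ almost surely, which together give the claim.

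For the martingale part, fix $k$ and put $M^{\sss (k)}_n:=\sum_{\ell=1}^n\Xi_{\ell+1}\1_{\ell<\sigma_k}$; since $\{\ell<\sigma_k\}\in\cF_\ell$, this is a martingale. The role of the exponent $W=V^{\nicefrac1q}$ is that $W^q=V$, so by Lemma~\ref{lem:C} one has $\E[|W(Y_{\ell+1})|^q\1_{\ell<\sigma_k}]=\E[V(Y_{\ell+1})\1_{\ell<\sigma_k}]\le C_k$, whence $\E|\Xi_{\ell+1}\1_{\ell<\sigma_k}|^q\le 2^q C_k/(\ell+1)^q$ (conditioning does not increase the $L^q$-norm). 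Because $q=p/(p-1)\in(1,2)$ (as $p>2$), the von Bahr--Esseen / Burkholder inequality for $L^q$-martingale differences yields $\sup_n\E|M^{\sss (k)}_n|^q\le C_q\sum_{\ell\ge1}2^qC_k/(\ell+1)^q<\infty$; being bounded in $L^q$ with $q>1$, $M^{\sss (k)}_n$ converges almost surely. On $\{\sigma_k=\infty\}$ one has $M^{\sss (k)}_n=\sum_{\ell\le n}\Xi_{\ell+1}$, so $\sum_\ell\Xi_{\ell+1}$ converges there; by Lemma~\ref{lem:cv_sigma_k}, it converges almost surely.

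For the bias part, write $m_\ell P=\eta_\ell Q+\Delta_\ell$ with $\Delta_\ell=m_0P+N_\ell$ and $N_\ell:=\sum_{i=1}^\ell(Q^{\sss (i)}_{Y_i}-Q_{Y_i})$, a signed-measure-valued martingale (recall $m_0P\cdot V<\infty$, hence $m_0P\cdot W<\infty$). The elementary quotient identity gives
\[
  \frac{m_\ell P\cdot W}{m_\ell P(E)}-\frac{\eta_\ell Q\cdot W}{\eta_\ell Q(E)}
  =\frac{(\Delta_\ell\cdot W)\,\eta_\ell Q(E)-(\eta_\ell Q\cdot W)\,\Delta_\ell(E)}{\eta_\ell Q(E)\,m_\ell P(E)}.
\]
On $\{k\le\ell<\sigma_k\}$ we have $m_\ell P(E)\ge c'\ell$ by definition of $\sigma_k$, and $c_1\ell\le\eta_\ell Q(E)=\sum_{i\le\ell}Q_{Y_i}(E)\le\ell$ by~(A1); hence $|\beta_{\ell+1}|\1_{\ell<\sigma_k}\lesssim_k \ell^{-2}|\Delta_\ell\cdot W|+\ell^{-3}(\eta_\ell Q\cdot W)|\Delta_\ell(E)|$. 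Now~(A'2-ii) gives $\eta_\ell Q\cdot W\le\theta\,\eta_\ell\cdot W+K\ell$; (A'2-iv) and Lemma~\ref{lem:C} give $\E|N_{\ell\wedge\sigma_k}\cdot W|^r\lesssim_k\ell$ (taking, w.l.o.g., $r\in(1,2)$); and~(A'2-iii) applied with the constant function $f\equiv 1$, followed by Jensen's inequality (here $p>2$), gives $\E[N_{\ell\wedge\sigma_k}(E)^2]\lesssim_k\ell$ and $\E|N_{\ell\wedge\sigma_k}(E)|\lesssim_k\sqrt\ell$. Substituting these and using $\E[\eta_{\ell\wedge\sigma_k}\cdot W]\le\E[\eta_{\ell\wedge\sigma_k}\cdot V]\le C_k\ell$, every contribution to $\E[|\beta_{\ell+1}|\1_{\ell<\sigma_k}]$ arising from $m_0P$, from $N_\ell\cdot W$, from $N_\ell(E)$ alone, or from $\eta_\ell\cdot W$ alone is $O(\ell^{-3/2})$ or $O(\ell^{\,1/r-2})$, hence summable since $r>1$ --- except for the cross term produced by $\eta_\ell\cdot W\cdot N_\ell(E)$.

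Controlling that cross term is the main obstacle. It requires bounding $\sum_\ell \ell^{-3}\,\E[(\eta_{\ell\wedge\sigma_k}\cdot W)\,|N_{\ell\wedge\sigma_k}(E)|]$; a plain Cauchy--Schwarz estimate would ask for $\E[(\eta_{\ell\wedge\sigma_k}\cdot W)^2]\lesssim_k\ell^2$, which is not available from Lemma~\ref{lem:C} (it would amount to a Lyapunov estimate for $V^{2/q}$, an exponent $>1$). Instead I will expand $(\eta_{\ell\wedge\sigma_k}\cdot W)N_{\ell\wedge\sigma_k}(E)$ by summation by parts and then iterate: the increments split into martingale pieces --- of increments $(\eta_{(m-1)\wedge\sigma_k}\cdot W)\xi_m\1_{m\le\sigma_k}$, $W(Y_m)\xi_m\1_{m\le\sigma_k}$, and $W(Y_m)N_{(m-1)\wedge\sigma_k}(E)\1_{m\le\sigma_k}-\E[\,\cdot\mid\cF_{m-1}]$, with $\xi_m:=Q^{\sss (m)}_{Y_m}(E)-Q_{Y_m}(E)$ --- and into an $\cF$-adapted residual $\sum_{m\le\ell}N_{(m-1)\wedge\sigma_k}(E)\frac{m_{m-1}P\cdot W}{m_{m-1}P(E)}\1_{m\le\sigma_k}$; each is then estimated by combining the $p$-th moment bounds of~(A'2-iii)--(A'2-iv) for $\xi_m$ and $Q^{\sss (m)}_{Y_m}\cdot W$ with the $L^1$ bounds of Lemma~\ref{lem:C} for $\eta_{m\wedge\sigma_k}\cdot V$ and $m_{m\wedge\sigma_k}P\cdot V$, the polynomial weights being just enough (using $r>1$ and $p>2$) to make the resulting series converge. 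Putting the three parts together, on each event $\{\sigma_k=\infty\}$ the series $\sum_\ell\gamma_{\ell+1}U_{\ell+1}\cdot W$ is a sum of an almost surely convergent martingale and an absolutely convergent series; since $\P(\cup_k\{\sigma_k=\infty\})=1$, it converges almost surely, which completes the proof.
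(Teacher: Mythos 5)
Your decomposition of $\gamma_{\ell+1}U_{\ell+1}\cdot W$ into the martingale increment $\Xi_{\ell+1}$ and the bias $\beta_{\ell+1}$, the use of the stopping times $\sigma_k$, and the $L^q$-martingale argument for $\sum_\ell\Xi_{\ell+1}$ all match the paper's proof and are correct (the bound $\E[W(Y_{\ell+1})^q\1_{\ell<\sigma_k}]=\E[V(Y_{\ell+1})\1_{\ell<\sigma_k}]\le C_k$ is exactly what Lemma~\ref{lem:C} provides). The genuine gap is in the bias part, at the point you yourself call the main obstacle: the cross term $\ell^{-3}\,\E\big[(\eta_{\ell\wedge\sigma_k}\cdot W)\,|N_{\ell\wedge\sigma_k}(E)|\big]$ is never actually estimated. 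What you give is a plan --- ``summation by parts and then iterate'', with the assertion that the polynomial weights are ``just enough'' --- but none of the resulting martingale pieces or the residual is bounded, and those pieces (e.g.\ $(\eta_{(m-1)\wedge\sigma_k}\cdot W)\xi_m$, or $N_{(m-1)\wedge\sigma_k}(E)\,m_{m-1}P\cdot W/m_{m-1}P(E)$) are again products of two quantities for which you only hold low-order moments, so the expansion reproduces the very product-moment problem it is meant to remove. As written, the summability of $\sum_\ell|\beta_{\ell+1}|\1_{\ell\le\sigma_k}$ is not proved. (A minor additional point: in the signed-kernel setting $\eta_\ell Q\cdot W$ need not be nonnegative, so bound $|\eta_\ell Q\cdot W|\le B^{\nicefrac1q}\,\eta_\ell\cdot W$ via (A'2-iv) rather than invoking (A'2-ii).)

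Two ways to close the gap. The paper never splits the numerator as you do: it applies the triangle inequality to the difference of ratios and keeps the quotient $m_{\ell-1}P\cdot W/m_{\ell-1}P(E)$ intact; since this quotient equals $\E_{\ell-1}[W(Y_\ell)]$ and $W^q=V$, Jensen gives $\E\big[(m_{\ell-1}P\cdot W/m_{\ell-1}P(E))^q\1_{\ell\le\sigma_k}\big]\le C_k$, and H\"older with exponents $(p,q)$ against $\E\big[|\eta_{\ell-1}Q(E)-m_{\ell-1}P(E)|^p\1_{\ell\le\sigma_k}\big]^{\nicefrac1p}=O(\ell^{\nicefrac12})$ (Dharmadhikari--Fabian--Jogdeo with (A'2-iii) applied to $f\equiv1$) yields an $O(\ell^{-\nicefrac32})$ bound, so no cross term ever appears. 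Alternatively, your cross term can be handled directly, without summation by parts: by Minkowski and Lemma~\ref{lem:C}, $\E\big[(\eta_{\ell\wedge\sigma_k}\cdot W)^q\big]^{\nicefrac1q}\le\sum_{i\le\ell}\E\big[V(Y_i)\1_{i\le\sigma_k}\big]^{\nicefrac1q}\le C_k^{\nicefrac1q}\,\ell$, and H\"older with exponents $(q,p)$ against $\E\big[|N_{\ell\wedge\sigma_k}(E)|^p\big]^{\nicefrac1p}=O(\ell^{\nicefrac12})$ gives $O(\ell^{\nicefrac32})$, hence an $O(\ell^{-\nicefrac32})$ summable contribution --- it is Cauchy--Schwarz that fails here, not H\"older with the exponent pair the assumptions are built around.
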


\begin{proof}
Fix $k\geq 1$.  Following~\cite[Lemma~1]{Renlund2010}, we let
  $Z_\ell = \gamma_\ell U_\ell\cdot W$ and
  $M_n = \sum_{\ell=1}^{n\wedge\sigma_k} (Z_\ell - \E_{\ell-1}Z_\ell)$, where $\E_{\ell-1}$ denotes the conditional expectation conditionally on ${\mathcal F}_{\ell-1}$.
  The rest of the proof is done into two steps: first, we prove that the martingale $(M_n)_{n\geq 0}$ is uniformly bounded in $L^r$, implying that it converges almost surely, second, we prove that $\sum_{\ell=1}^{n\wedge\sigma_k}\E_{\ell-1}Z_\ell$ converges almost surely when $n$ tends to infinity.
  
  {\it Step 1:} { Using Jensen's inequality, we get that the constant~$r$ can be assumed to be arbitrarily small as long as it is larger than~1; in particular, we can assume that $r<2$.}
  Using this together with Lemma~1 in~\cite{Chatterji1969}, we get 
  \begin{equation}
    \label{eq:M_n}
    \mathbb E |M_n|^r
    \leq  2\sum_{\ell=1}^n \mathbb E\left[\big|Z_\ell - \E_{\ell-1}Z_\ell\big|^r\1_{\ell\leq \sigma_k}\right]
    \leq 8\sum_{\ell=1}^n \E\left[|Z_\ell|^r\1_{\ell\leq \sigma_k}\right].
  \end{equation}
  Recall that, by definition, 
  $U_\ell=\tilde \eta_{\ell-1}Q(E)\delta_{Y_\ell}-\tilde \eta_{\ell-1}Q$ and
  $\gamma_\ell = \big(\eta_\ell(E)\tilde\eta_{\ell-1}Q(E)\big)^{-1}$ (see Lemma~\ref{lem:algo_sto});
  therefore, we have
  \begin{align*}
    \E\left[|Z_\ell|^r\1_{\ell\leq \sigma_k}\right]
    &=\E\left[\frac{\left|\tilde{\eta}_{\ell-1}Q(E) W(Y_\ell) - \tilde{\eta}_{\ell-1}Q\cdot W\right|^r}
      {\left|\eta_{\ell}(E)\tilde{\eta}_{\ell-1}Q(E)\right|^r}\1_{\ell\leq \sigma_k}\right]\\
    &\leq 2\mathbb E\left[\frac{V(Y_\ell)}{\eta_\ell(E)^r}\1_{\ell\leq \sigma_k}
      + \frac{1}{\eta_\ell(E)^r}\left|\frac{\tilde \eta_{\ell-1}Q}{\tilde\eta_{\ell-1}Q(E)}\cdot W\right|^r\1_{\ell\leq \sigma_k}
      \right],
  \end{align*}
  where we recall that $W=V^{\nicefrac1q}$.
  Using Assumption~(A'2-iv) and the fact that
  $\eta_\ell(E)=\ell$, $\tilde\eta_{\ell-1}Q(E)\geq c_1$ (see  Assumption~(A1)) and
  $\mathbb E[V(Y_\ell)\1_{\ell\leq \sigma_k}] \leq C_k$ (see Lemma~\ref{lem:C}), we get
  \[
    \E \left[|Z_\ell|^r\1_{\ell\leq \sigma_k}\right] \leq
    \frac{2C_k}{\ell^r} + \frac{2\mathbb E\big[\tilde\eta_{\ell-1}\big|Q\cdot
      W\big|^r\1_{\ell\leq \sigma_k}\big]}{c_1^r\ell^r} \leq
    \frac{2}{\ell^r}\left(C_k+\frac{BC_k}{c_1^r}\right),
  \]
  where we used Lemma~\ref{lem:C} and Assumption~(A'2-iv) for the last inequality { (recall that, by Jensen's inequality, $r$ can be assumed to be arbitrarily close to one, and thus smaller than~$q$, in particular)}.
  Using Equation~\eqref{eq:M_n}, this implies that the martingale
  $(M_n)_{n\geq 0}$ is uniformly bounded in $L^r$ and hence that it
  converges almost surely. 

\medskip
  {\it Step 2:} Using the fact that $\eta_\ell(E)=\ell$, we also have
  \begin{align*}
    \E\big|\E_{\ell-1}[Z_\ell]\1_{\ell\leq\sigma_k}\big|
    &= \E\left|\E_{\ell-1}\left[
      \frac{\tilde{\eta}_{\ell-1}Q(E) W(Y_\ell)-\tilde{\eta}_{\ell-1}Q\cdot W}
      {\eta_{\ell}(E)\tilde{\eta}_{\ell-1}Q(E)}\right]\1_{\ell\leq\sigma_k}\right|\\
    &{=} \frac{1}{\ell}\,
      \E\left|\E_{\ell-1}\left[W(Y_\ell)-\frac{\tilde{\eta}_{\ell-1}Q\cdot W}
      {\tilde{\eta}_{\ell-1}Q(E)}\right]\1_{\ell\leq\sigma_k}\right|\\
    &=\frac{1}{\ell}\,\E\left|\frac{m_{\ell-1}P\cdot W}{m_{\ell-1}P(E)}\1_{\ell\leq\sigma_k}-\frac{\eta_{\ell-1}Q\cdot W}{\eta_{\ell-1}Q(E)}\1_{\ell\leq\sigma_k}\right|,
  \end{align*}
  {  where we used for the last equality that the conditional distribution of $Y_\ell$ given $\cF_{\ell-1}$ is $m_{\ell-1}P/m_{\ell-1}P(E)$.}
  By the triangular inequality, and using the fact that
  $\eta_{\ell-1}Q(E) \geq c_1(\ell-1)$ almost surely (see
  Assumption~(A1)), we get
  \begin{align}
    \E\big|\E_{\ell-1}[Z_\ell]\1_{\ell\leq\sigma_k}\big|
    &\leq 
      \frac{1}{c_1\ell(\ell-1)}\,\E\Big[\left|m_{\ell-1} P\cdot W - \eta_{\ell-1}Q\cdot W\right|\1_{\ell\leq\sigma_k}\Big]\nonumber\\
    &\quad\quad\quad\quad+\frac{1}{\ell}\,\E\left[\left|\frac{1}{m_{\ell-1}P(E)}-\frac{1}{\eta_{\ell-1}Q(E)}\right| m_{\ell-1}P \cdot W\1_{\ell\leq\sigma_k}\right]\label{eq:substep}.
  \end{align}
Let us first bound the first term of the above sum. Using Jensen's inequality and Lemma~1 in~\cite{Chatterji1969} (note that $(m_{\ell\wedge\sigma_k} P-\eta_{\ell\wedge\sigma_k} Q)_{\ell\geq 0}$ is a martingale), we get
\begin{align*}
  \E\Big[\big|m_{\ell-1}P\cdot W - \eta_{\ell-1}&Q\cdot W\big|\1_{\ell\leq\sigma_k}\Big]^{r}
  {\leq \E\Big[\big|m_{\ell-1}P\cdot W - \eta_{\ell-1}Q\cdot W\big|\1_{\ell-1\leq\sigma_k}\Big]^{r}}\\
&\leq \E\big|m_{(\ell-1)\wedge\sigma_k}P\cdot W - \eta_{(\ell-1)\wedge\sigma_k}Q\cdot W\big|^{r}\\
                                              &\leq 2(m_0P\cdot W)^{r} + 2\sum_{i=1}^{\ell-1}\E\left[\left|Q^{\sss (i)}\cdot W(Y_i)-Q\cdot W(Y_i)\right|^{r}\1_{i\leq \sigma_k}\right]\\
                                              & \leq 2\big(m_0 P\cdot W\big)^{r}+2\sum_{i=1}^{\ell-1}  B\,\E\left[V(Y_i)\1_{i\leq\sigma_k}\right],
\end{align*}
where we used the fact that $\1_{\ell\leq \sigma_k}\leq\1_{\ell-1\leq \sigma_k}$ almost surely, 
that $\1_{i\leq\sigma_k}$ is measurable with respect to
$\mathcal F_{i-1}\cup\sigma(Y_i)$,
and  Assumption~(A'2-iv).
Finally, Lemma~\ref{lem:C} implies that there exists a constant $C'_k>0$ such that
\begin{align}
\label{eq:substep1}
&\E\left[\left|m_{\ell-1}P\cdot W - \eta_{\ell-1}Q\cdot W\right|\1_{\ell\leq\sigma_k}\right]
\leq C'_k\,\left((m_0P\cdot W)^r + \ell-1\right)^{\nicefrac1r}.
\end{align}
Let us now look at the second term in the right-hand side of Equation~\eqref{eq:substep};
using Assumption~(A1), we have that
\begin{align*}
  &\E\left[\left|\frac{1}{m_{\ell-1}P(E)}-\frac{1}{\eta_{\ell-1}Q(E)}\right|
    m_{\ell-1}P\cdot W\1_{\ell\leq\sigma_k}\right]\\
    &\hspace{.5cm}= \E\left[\frac{|\eta_{\ell-1}Q(E)-m_{\ell-1}P(E)|}{\eta_{\ell-1}Q(E)}\,
      \frac{m_{\ell-1}P\cdot W}{m_{\ell-1}P(E)}\1_{\ell\leq\sigma_k}\right]\\
    &\hspace{.5cm}\leq \frac{1}{c_1(\ell-1)}\E\left[\left|\eta_{\ell-1}Q(E)-m_{\ell-1}P(E)\right|
      \frac{m_{\ell-1}P\cdot W}{m_{\ell-1}P(E)}\1_{\ell\leq\sigma_k}\right]\\
    &\hspace{.5cm}\leq \frac{1}{c_1(\ell-1)}\E\Big[\left|\eta_{\ell-1}Q(E)-m_{\ell-1}P(E)\right|^{p}\1_{\ell\leq\sigma_k}\Big]^{\nicefrac1p}
    \E\left[\left(\frac{m_{\ell-1}P\cdot W}{m_{\ell-1}P(E)}\right)^q\1_{\ell\leq\sigma_k}\right]^{\nicefrac1q}\\
    &\hspace{.5cm}\leq   \frac{C_k^{\nicefrac1q}}{c_1(\ell-1)} \E\Big[\left|\eta_{\ell-1}Q(E)-m_{\ell-1}P(E)\right|^{p}\1_{\ell\leq\sigma_k}\Big]^{\nicefrac1p},
    \end{align*}
where we used H\"older's inequality (in the second inequality),
Jensen's inequality and Lemma~\ref{lem:C} (in the last
inequality). Now, using the main result
of~\cite{DharmadhikariFabianEtAl1968}, we obtain that, for some
constant $d_p>0$,
\begin{multline*}
  \E\left[\left|\eta_{\ell-1}Q(E)-m_{\ell-1}P(E)\right|^{p}\1_{\ell\leq\sigma_k}\right]
  \leq \E\left[\left|\eta_{\ell\wedge\sigma_k-1}Q(E)-m_{\ell\wedge\sigma_k-1}P(E)\right|^{p}\right]\\
  \begin{aligned}
  &\leq 2^{p-1}\bigg[m_0P(E)^p+d_p(\ell-1)^{p/2-1}\sum_{i=1}^{\ell-1} \E\left[\big|Q_{Y_i}(E)-Q^{(i)}_{Y_i}(E)\big|^p\1_{i<\sigma_k}\right]\bigg]\\
  &\leq 2^{p-1}\bigg[m_0P(E)^p+d_p(\ell-1)^{p/2-1}\sum_{i=1}^{\ell-1} A\,\E\left[V(Y_i)\1_{i\leq\sigma_k}\right]\bigg],
  \end{aligned}
\end{multline*}
where we used Assumption~(A'2-iii). Hence, using Lemma~\ref{lem:C}, we deduce that
\begin{equation}
  \label{eq:substep2}
  \E\left[\left|\frac{1}{m_{\ell-1}P(E)}-\frac{1}{\eta_{\ell-1}Q(E)}\right| m_{\ell-1}P\cdot W\right]^p
  \leq \frac{C_k^{\nicefrac pq}2^{p-1}}{c_1^p(\ell-1)^p}\left(m_0P(E)^p+d_p (\ell-1)^{\nicefrac p2} A C_k\right).
\end{equation}

Finally, from inequalities \eqref{eq:substep},\eqref{eq:substep1} and~\eqref{eq:substep2}, 
we deduce that $\sum_{\ell=1}^\infty\E\left|\E_{\ell-1}Z_\ell\1_{\ell\leq\sigma_k}\right|<\infty$. 
As a consequence, $\sum_{\ell=1}^{\sigma_k}|\E_{\ell-1}Z_\ell|<\infty$ almost surely, 
implying that $\sum_{\ell=1}^{n\wedge\sigma_k}\E_{\ell-1}Z_\ell$ converges almost surely when $n\rightarrow\infty$. 
Recall that we have proved that $M_n = \sum_{\ell=1}^{n\wedge\sigma_k} (Z_\ell-\mathbb E_{\ell-1}Z_\ell)$ converges almost surely when $n$ goes to infinity
(we showed earlier that it was uniformly bounded in $L^r$).
Therefore, we can imply that $\sum_{\ell=1}^{n\wedge\sigma_k} Z_k$ converges almost surely. Since $\P(\cup_{k\geq 1}\{\sigma_k=+\infty\})=1$ (see Lemma~\ref{lem:cv_sigma_k}), 
we get that $\sum_{\ell=1}^{n} Z_k$ converges almost surely, which concludes the proof.
\end{proof}

From now on, for all $C>0$, we set
\[
  \cP_C(E):=\big\{\mu\text{ : $\mu$ is a probability on $E$ such that }\mu\cdot W\leq
  C\big\},
\]
where we recall that $W= V^{\nicefrac1q}$.
Note that $\cP_C(E)$ is a compact subset of $\cP(E)$ (the set of
Borel probability measures on $E$) with respect to the topology of weak convergence.

\begin{lemma}
\label{lem:compacity}
The sequence $(\tilde{\eta}_n)_{n\geq 0}$ is almost surely relatively
compact in $\cP(E)$ with respect to the topology of weak
convergence. More precisely, there exists a random value $C>0$ such
that, almost surely, $\tilde{\eta}_n\in \cP_C(E)$ for all $n\in\N$.
\end{lemma}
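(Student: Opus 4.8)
The plan is as follows. As observed just above the statement, $\cP_C(E)$ is a compact subset of $\cP(E)$ for the topology of weak convergence; hence it suffices to exhibit a finite random variable $C>0$ such that, almost surely, $\tilde\eta_n\cdot W\le C$ for all $n\ge 1$, where $W=V^{\nicefrac1q}$. To this end I would work directly with the stochastic recursion of Lemma~\ref{lem:algo_sto} tested against the function $W$, and exploit the contraction built into the Lyapunov estimate (A'2-ii).

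Write $a_n:=\tilde\eta_n\cdot W$ (note $a_n\ge 1$ since $W\ge 1$). Applying Lemma~\ref{lem:algo_sto} to $W$ gives, for all $n\ge 1$,
\[
a_{n+1}-a_n=\gamma_{n+1}\bigl(\tilde\eta_n Q\cdot W-\tilde\eta_n Q(E)\,a_n+U_{n+1}\cdot W\bigr).
\]
By Assumption~(A'2-ii), $\tilde\eta_n Q\cdot W=\int (Q_x\cdot W)\,\mathrm d\tilde\eta_n(x)\le \theta a_n+K$; by Assumption~(A1), $\tilde\eta_n Q(E)\ge c_1$; and $a_n\ge 0$, $\gamma_{n+1}>0$, so
\[
a_{n+1}\le a_n\bigl(1-\gamma_{n+1}(c_1-\theta)\bigr)+\gamma_{n+1}K+\gamma_{n+1}\,U_{n+1}\cdot W.
\]
Set $\delta:=c_1-\theta>0$ and $b_{n+1}:=\gamma_{n+1}\delta$; since $\gamma_{n+1}=\bigl((n+1)\,\tilde\eta_n Q(E)\bigr)^{-1}\le \bigl(c_1(n+1)\bigr)^{-1}$ we have $0<b_{n+1}<1$ for all $n\ge1$, and $\gamma_{n+1}K=\tfrac{K}{\delta}\,b_{n+1}$.

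Now let $S_n:=\sum_{\ell=1}^{n}\gamma_\ell\,U_\ell\cdot W$, which converges almost surely to a finite limit by Lemma~\ref{lem:U}; in particular $S^*:=\sup_{n}|S_n|<\infty$ almost surely. Introduce $u_n:=a_n-S_n$. Substituting $a_n=u_n+S_n$ into the last display and using $\gamma_{n+1}U_{n+1}\cdot W=S_{n+1}-S_n$ gives, for all $n\ge 1$,
\[
u_{n+1}\le u_n(1-b_{n+1})+b_{n+1}\Bigl(\tfrac{K}{\delta}-S_n\Bigr)\le u_n(1-b_{n+1})+b_{n+1}\Bigl(\tfrac{K}{\delta}+S^*\Bigr),
\]
so that, $b_{n+1}\in(0,1)$ being a convex weight, $u_{n+1}\le\max\bigl(u_n,\tfrac{K}{\delta}+S^*\bigr)$. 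By induction $u_n\le\max\bigl(u_1,\tfrac{K}{\delta}+S^*\bigr)$ for all $n\ge 1$, and since $u_1=W(Y_1)-S_1$ is finite, we conclude $a_n=u_n+S_n\le\max\bigl(u_1,\tfrac{K}{\delta}+S^*\bigr)+S^*=:C$, a finite random variable, for all $n\ge 1$. Hence $\tilde\eta_n\in\cP_C(E)$ for every $n$, which proves the claim by compactness of $\cP_C(E)$.

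The argument is essentially self-contained once Lemmas~\ref{lem:algo_sto} and~\ref{lem:U} are available; the only point requiring care — and the place where the structure of the problem really enters — is that one must use the strict inequality $\theta<c_1$ (equivalently $\delta>0$) so that the drift in the recursion genuinely pushes $a_n$ back toward the bounded region $\{a\le K/\delta\}$, allowing the constant term $\gamma_{n+1}K$ and the almost-surely summable noise $\gamma_{n+1}U_{n+1}\cdot W$ to be absorbed by a convex-combination ("no runaway") argument rather than requiring them to vanish.
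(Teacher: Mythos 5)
Your proof is correct, and it rests on exactly the same ingredients as the paper's: the recursion of Lemma~\ref{lem:algo_sto} tested against $W=V^{\nicefrac1q}$, the bounds $\tilde\eta_n Q\cdot W\leq \theta\,\tilde\eta_n\cdot W+K$ from (A'2-ii) and $\tilde\eta_n Q(E)\geq c_1$ from (A1), the almost-sure convergence of $\sum_\ell \gamma_\ell U_\ell\cdot W$ from Lemma~\ref{lem:U}, and the compactness of $\cP_C(E)$. Where you genuinely differ is in how the resulting scalar recursive inequality is exploited. The paper keeps the inequality for $a_n=\tilde\eta_n\cdot W$ itself and runs an induction organised around the last crossing of the level $\hat K/(c_1-\theta)$: on an excursion above that level the drift term is non-positive, so the increase over the excursion is controlled by $M=\sup_{m\geq n\geq1}\big|\sum_{k=n}^{m}\gamma_{k+1}U_{k+1}\cdot W\big|$. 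You instead subtract the convergent noise series, setting $u_n=a_n-S_n$, which turns the inequality into $u_{n+1}\leq(1-b_{n+1})u_n+b_{n+1}\big(\nicefrac{K}{\delta}-S_n\big)$ with $b_{n+1}\in(0,1)$, and then conclude by the elementary remark that a convex combination never exceeds the larger of its two entries; this avoids the crossing-time case analysis entirely and is, if anything, cleaner, while the paper's version works directly with the oscillation of the noise partial sums, which is the quantity Lemma~\ref{lem:U} most immediately controls. Both arguments use the strict inequality $\theta<c_1$ in the same essential way, and both yield a random finite bound of the same flavour ($\max(u_1,\nicefrac K\delta+S^*)+S^*$ versus $2M+\tfrac{1+c_1-\theta}{c_1-\theta}\hat K$), so the difference is one of bookkeeping rather than substance. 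One cosmetic point, inherited from the paper's own statement of Lemma~\ref{lem:U}: the term $\ell=1$ of your $S_n$ involves $\gamma_1$, which is not defined since $\tilde\eta_0=0$; starting the sum at $\ell=2$ (your recursion only ever uses the increments $S_{n+1}-S_n$ for $n\geq1$, and one can simply take $u_1=a_1$) removes this without any other change.
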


\begin{proof}
Using Lemma~\ref{lem:algo_sto}, we have that, for all $n\geq 0$ (recall that $W=V^{\nicefrac1q}$),
\begin{align*}
\tilde\eta_{n+1}\cdot W
=\tilde\eta_n \cdot W
+\gamma_{n+1}\left(U_{n+1}\cdot W + F(\tilde\eta_n)\cdot W\right),
\end{align*}
where
\begin{align*}
F(\tilde\eta_n)\cdot W
&=\tilde\eta_n Q \cdot W - \tilde\eta_n Q(E) \tilde\eta_n\cdot W
\leq \theta \tilde\eta_n\cdot W +K - c_1\,\tilde\eta_n\cdot W,
\end{align*}
where we have used Assumption~(A1) and~(A'2-ii).
Therefore, we get
\begin{align}
\label{eq:recur-rel-weight}
\tilde\eta_{n+1}\cdot W
\leq \tilde\eta_n\cdot W 
+ \gamma_{n+1}\left(U_{n+1}\cdot W + K+ (\theta-c_1)\tilde\eta_n\cdot W\right).
\end{align}
We define the random variable
\begin{align*}
M=\sup_{m\geq n\geq 1}\left|\sum_{k=n}^m \gamma_{n+1}U_{k+1}\cdot W\right|
\end{align*}
which is finite almost surely (by Lemma~\ref{lem:U}).
Let us prove by induction that
\begin{align}
\label{eq:bound-of-eta_n-V-weight}
\tilde\eta_n\cdot W
\leq 2M + \frac{1+c_1-\theta}{c_1-\theta}\hat K,
\end{align}
where $\hat K = \nicefrac{K}{c_1}\vee (\tilde \eta_1\cdot W)$ (note that $\hat K$ is random and that $\hat K\geq \nicefrac K{c_1}\geq K$).
The result is immediate for $n=1$.
Assume now that the result holds true for $n\geq 1$. 
If $\tilde\eta_n\cdot W\leq \frac{\hat K}{c_1-\theta}$, 
then~\eqref{eq:recur-rel-weight} entails that
\begin{align*}
\tilde\eta_{n+1}\cdot W
\leq \frac{\hat K}{c_1-\theta} + M + \hat K
\leq M + \frac{1+c_1-\theta}{c_1-\theta}\hat K,
\end{align*}
because $\gamma_{n+1}\leq \nicefrac1{c_1}$ almost surely by Assumption~(A1).
If $\tilde\eta_n\cdot W>\frac{\hat K}{c_1-\theta}$, 
then we define the (random) integer $n_0$ by
\begin{align*}
n_0=\sup\left\{k\in\{1,\ldots,n\}\text{ such that }\tilde\eta_k\cdot W
>\frac{\hat K}{c_1-\theta}\text{ and }\tilde\eta_{k-1}\cdot W\leq \frac{\hat K}{c_1-\theta}\right\},
\end{align*}
which is well defined since $\tilde\eta_1\cdot W\leq \hat K$ by definition of $\hat K$. 
We can thus deduce as above that
$\tilde\eta_{n_0}\cdot W\leq M+\frac{1+c_1-\theta}{c_1-\theta}\hat K$ 
and hence
\begin{align*}
\tilde\eta_{n+1}\cdot W
\leq \tilde\eta_{n_0}\cdot W
+\sum_{k=n_0}^n \gamma_{k+1}U_{k+1}\cdot W
\leq  M+\frac{1+c_1-\theta}{c_1-\theta}\hat K+M.
\end{align*}
Finally, we deduce by induction that~\eqref{eq:bound-of-eta_n-V-weight} holds true for all $n\geq 1$.

Since the right-hand side of~\eqref{eq:bound-of-eta_n-V-weight} does not depend on $n$ 
and since $W=V^{\nicefrac1q}$ has relatively compact level sets by Assumption~(A'2), 
we deduce that $(\tilde\eta_n)_{n\in\N}$ is almost surely relatively compact 
for the topology of weak convergence on $\cP(E)$ (see for instance \cite[Theorem~6.7,~Chapter~II]{Parthasarathy1967}).
\end{proof}

\begin{lemma}
  \label{lem:flow}
  For any $C\geq \frac{K}{c_1-\theta}$ and
  any $\mu_0\in\cP_C(E)$,
  $t\mapsto \nu_t:=\P_{\mu_0}\left(X_t\in\cdot\mid X_t\neq\d\right)$
  is the unique solution to the dynamical system~\eqref{eq:dyn-sys}
  with values in $\cP_{C}(E)$ and it is continuous with
  respect to $(\mu_0,t)\in \cP_C(E)\times [0,+\infty)$.
\end{lemma}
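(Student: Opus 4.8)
### Proof proposal for Lemma~\ref{lem:flow}

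\paragraph{Overall strategy.}
The plan is to verify that $t\mapsto\nu_t:=\mathbb P_{\mu_0}(X_t\in\cdot\mid X_t\neq\partial)$ solves~\eqref{eq:dyn-sys}, then establish a priori bounds keeping the flow inside $\cP_C(E)$, then prove uniqueness by a Gr\"onwall-type estimate, and finally obtain joint continuity. I would work throughout with the sub-Markovian semigroup $(P_t)_{t\ge 0}$ generated by $Q-I$ (well-defined by Assumption~(A1), and by the remark on $[2.25]$ of \cite{Chen2004} when $(R_x)$ is only a signed kernel under~(A'2)), writing $\mu_0 P_t\cdot f=\mathbb E_{\mu_0}[f(X_t)\1_{t<\tau_\partial}]$ and $\mu_0 P_t(E)=\mathbb P_{\mu_0}(t<\tau_\partial)$, so that $\nu_t=\mu_0P_t/\mu_0P_t(E)$.

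\paragraph{Step 1: $\nu_t$ solves the ODE.}
First I would differentiate $\nu_t\cdot f=\dfrac{\mu_0P_t\cdot f}{\mu_0P_t(E)}$ in $t$ for $f$ bounded continuous. Using $\frac{d}{dt}\mu_0P_t\cdot f=\mu_0P_t(Q-I)\cdot f=\mu_0P_t Q\cdot f-\mu_0P_t\cdot f$ (valid for all bounded measurable $f$ since $Q-I$ is the generator and $P_t$ maps bounded functions to bounded functions; one should check the differentiability using that $\|Q\|\le 1$ and that $\mu_0 P_t\cdot V<\infty$ from Step~2), the quotient rule gives
\[
\frac{d}{dt}\,\nu_t\cdot f
=\frac{\mu_0P_tQ\cdot f}{\mu_0P_t(E)}-\nu_t\cdot f
-\nu_t\cdot f\left(\frac{\mu_0P_tQ(E)}{\mu_0P_t(E)}-1\right)
=\nu_t Q\cdot f-\nu_t Q(E)\,\nu_t\cdot f,
\]
which is exactly~\eqref{eq:dyn-sys}. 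The only subtlety is justifying the interchange of $\frac{d}{dt}$ with $\mu_0\cdot$ and applying the generator identity to the unbounded-at-infinity situation; this is handled by the Lyapunov bound of Step~2, which controls $\mu_0P_t\cdot V$ and hence gives uniform integrability.

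\paragraph{Step 2: a priori bound $\nu_t\in\cP_C(E)$.}
Apply~(A'2-ii), in the form $Q_x\cdot W\le\theta W(x)+K$ with $W=V^{1/q}$, to get $\frac{d}{dt}(\mu_0P_t\cdot W)\le(\theta-1)\mu_0P_t\cdot W+K\mu_0P_t(E)$; combined with $\mu_0P_t(E)\ge c_1$-type lower bounds coming from Assumption~(A1) (more precisely $\frac{d}{dt}\mu_0P_t(E)=\mu_0P_t(Q(E)-1)\ge(c_1-1)\mu_0P_t(E)$, so $\mu_0P_t(E)\ge e^{(c_1-1)t}$ but also $\mu_0P_tQ(E)\ge c_1\mu_0P_t(E)$), one derives for $g_t:=\nu_t\cdot W$ the differential inequality
\[
\frac{d}{dt}g_t=\nu_tQ\cdot W-\nu_tQ(E)g_t\le\theta g_t+K-c_1 g_t=-(c_1-\theta)g_t+K.
\]
Since $g_0=\mu_0\cdot W\le C$ and $C\ge K/(c_1-\theta)$, Gr\"onwall gives $g_t\le\max\{C,K/(c_1-\theta)\}=C$ for all $t\ge 0$; hence $\nu_t\in\cP_C(E)$.

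\paragraph{Step 3: uniqueness and continuity.}
For uniqueness, I would linearize: if $(\mu_t)$ is another $\cP_C(E)$-valued solution, set $u_t:=\mu_0P_t$ and note that any solution $\mu_t$ of~\eqref{eq:dyn-sys} staying in $\cP_C(E)$ must equal $v_t/v_t(E)$ where $v_t$ solves the \emph{linear} equation $\frac{d}{dt}v_t\cdot f=v_tQ\cdot f-v_t\cdot f$ with $v_0=\mu_0$ — this is the standard trick: writing $c(t)=\exp\!\big(\int_0^t(\mu_sQ(E)-1)\,ds\big)$ and checking $v_t:=c(t)\mu_t$ solves the linear equation, which has the unique solution $v_t=\mu_0P_t$ by semigroup theory (uniqueness of the sub-Markovian semigroup from Assumption~(A1)/$[2.25]$ of \cite{Chen2004}). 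Dividing back, $\mu_t=\mu_0P_t/\mu_0P_t(E)=\nu_t$. For joint continuity of $(\mu_0,t)\mapsto\nu_t$ on $\cP_C(E)\times[0,\infty)$: continuity in $t$ is immediate from the ODE and the bound $\|\nu_tQ\|,\|\nu_t\|$ controlled on $\cP_C(E)$; continuity in $\mu_0$ follows because $\mu_0\mapsto\mu_0P_t\cdot f$ is continuous for bounded continuous $f$ (using Assumption~(A4), that $x\mapsto Q_xf$ is continuous, to propagate weak continuity through the semigroup via its series/Trotter representation $P_t=e^{-t}\sum t^n Q^n/n!$) and $\mu_0P_t(E)\ge c_1$-type bounds prevent the denominator from vanishing; one combines this with the equicontinuity on the compact set $\cP_C(E)$ to upgrade to joint continuity.

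\paragraph{Main obstacle.}
The delicate point is \textbf{Step~1 together with the uniqueness argument}: rigorously justifying that $\nu_t$ is differentiable with the claimed derivative requires care because $Q$ is only sub-Markovian with $\|Q\|\le 1$ (so $\frac{d}{dt}\mu_0P_t$ is a difference of finite measures but $V$-integrability must be tracked), and uniqueness among \emph{all} $\cP_C(E)$-valued solutions — not just those of the form $\mu_0P_t/(\cdot)$ — forces one to verify that the reduction to the linear equation is airtight, i.e.\ that $t\mapsto\mu_sQ(E)$ is continuous and bounded (which follows from $\mu_s\in\cP_C(E)$ and $\|Q\|\le1$, but should be spelled out) so that the integrating factor $c(t)$ is well-defined and the linear ODE in the measure variable genuinely has a unique solution in the relevant class. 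Once those integrability/boundedness bookkeeping points are in place, the rest is routine Gr\"onwall and weak-convergence manipulation.
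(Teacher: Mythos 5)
Your overall architecture coincides with the paper's: the conditioned semigroup solves~\eqref{eq:dyn-sys}, a Lyapunov/Gr\"onwall-type bound keeps the flow in $\cP_C(E)$, uniqueness is obtained through the integrating-factor linearization $c(t)\mu_t$, and continuity of the semigroup gives joint continuity. However, two steps as written have genuine gaps. First, uniqueness: you assert that the linear equation $\frac{d}{dt}v_t\cdot f=v_t(Q-I)\cdot f$ ``has the unique solution $v_t=\mu_0P_t$ by semigroup theory''. Uniqueness of the sub-Markovian semigroup generated by $Q-I$ (Chen, Theorem~2.25) is uniqueness of the transition function; it does not by itself exclude other measure-valued paths solving the weak forward equation with the same initial condition. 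This is precisely the point the paper does not take for granted: it proves $\theta_t\cdot f=\E_{\theta_0}f(X_t)$ by showing that $s\mapsto \E_{\theta_s}[f(X_{t-s})]$ has zero derivative, combining the forward equation for $\theta$ (tested against $x\mapsto \E_x f(X_{t-s})$, shown beforehand in its continuity step to be bounded and continuous) with the backward Kolmogorov equation. Without this duality argument, or an equivalent substitute, your uniqueness step is incomplete.

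Second, your estimates repeatedly invoke ``$\|Q\|\le 1$'' and the series representation $P_t=e^{-t}\sum_n t^nQ^n/n!$. The lemma is proved under (T, A1, A'2, A3, A4), where $Q$ may be a signed kernel with $Q_x(\{x\})$ unbounded (as in the protected-nodes example, where the total variation of $Q_x$ grows linearly in $x$); the bound $Q_x(E)\le 1$ concerns the signed mass after cancellation, so $Q$ is not a bounded operator on bounded functions, the exponential series is unavailable, and claims such as ``$\mu_sQ(E)$ bounded by $\|Q\|\le1$'' or ``$\|\nu_tQ\|$ controlled'' are unjustified. The paper replaces these with the moment bounds of (A'2-iii)--(iv), namely $|Q_x\cdot f|^{q'}\le AV(x)$ and $|Q_x\cdot W|\le B^{\nicefrac1q}W(x)$, and it handles unbounded test functions --- including your direct differentiation of $g_t=\nu_t\cdot W$ in Step~2, which presupposes the forward equation for the unbounded function $W$ --- through Dynkin's formula for the process stopped at $T_N=\inf\{t\ge 0:\ W(X_t)\ge N\}$, Fatou's lemma and monotone convergence, and the integral comparison of Lemma~\ref{lem:tech} in place of a differential Gr\"onwall inequality. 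Continuity of $(x,t)\mapsto\E_x f(X_t)$ is likewise obtained by locally uniform approximation with the stopped process (a pure jump process with bounded continuous jump measure), not through a power series in $Q$. These repairs are available under the paper's assumptions, but as stated your Steps~1--3 rest on boundedness properties of $Q$ that fail in the generality required.
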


\begin{proof}
  \noindent\textit{Step 1. Existence.}
  Fix $C>0$ and $\mu_0\in\cP_C(E)$.  We consider the weak
  forward-Kolmogorov equation defined as
  \begin{equation}
    \label{eq:FK-classical}
    \frac{\mathrm d\mu_t\cdot f}{\mathrm dt}=\mu_t(Q-I)\cdot f,
  \end{equation}
  for all bounded continuous functions $f:E\rightarrow\R$. If $\mu_0$
  is a Dirac measure $\delta_x$, then,
  by~\cite[Theorem~2.21]{Chen2004},
  $t\mapsto \P_x\left(X_t\in\cdot\right)$ is a solution of this
  equation. Recall that $W=V^{\nicefrac1q}$; Equation~(2.29)
  in~\cite{Chen2004} states that if there exists a constant $c>0$ such
  that $(Q-I)W\leq cW$, then, for all $x\in E$, for all $s\geq 0$,
  $\mathbb E_x[W(X_s)]\leq W(x)\mathrm e^{cs}$ (here and below, we
  always assume that the considered functions vanish on $\d$, so that
  $\mathbb E_x[W(X_s)]=\mathbb E_x[W(X_s)\1_{X_s\neq \d}]$).  Using
  Assumption~(A'2-iv), we get that $|Q_x W|\leq B^{\nicefrac1q} W$,
  which thus implies that
  \begin{equation}\label{eq:latter_estimate}
  \E_x W(X_s)\leq e^{(B^{1/q}+1)s} W(x)\quad\text{ for all }s\geq 0.
  \end{equation}
  If $\mu_0$ is not a Dirac mass, we get, 
  from Equation~\eqref{eq:latter_estimate} and from Assumption~(A'2-iii), that
  $(s,x)\mapsto \E_x[(Q-I)f(X_s)]$ is integrable with respect to
  $\mathrm d s \,\mu(\mathrm dx)$  on $[0,t]\times E$. Therefore, we can use Fubini's theorem and get that, for all $t\geq 0$,
  \begin{align}
    \label{eqFK}
    \E_{\mu_0}f(X_t)=\mu_0\cdot f + \int_0^t \E_{\mu_0}[(Q-I)f(X_s)]\,\mathrm ds,
  \end{align}
  which means that $t\mapsto \P_{\mu_0}\left(X_t\in\cdot\right)$ is a
  solution of~\eqref{eq:FK-classical}. 
  
  In both cases ($\mu_0$ being a Dirac mass or not), $t\mapsto \P_{\mu_0}\left(X_t\in\cdot\right)$ is a solution of~\eqref{eq:FK-classical}, 
  and, thus, $\nu_t$ is a solution of~\eqref{eq:dyn-sys}. 
  Since, by Assumption~(A1), $\P_{\mu_0}(X_t\in E)\geq e^{-(1-c_1)t}$ for all $t\geq 0$,
  we get that 
  \begin{equation}\label{eq:ineq_nuV}
  \nu_t \cdot W\leq \mathrm e^{(B^{1/q}+2-c_1)t}\nu_0\cdot W\quad\text{ for all }t\geq 0.
  \end{equation}

  \medskip
 \noindent\textit{Step 2. Compactness.} Let us now prove that
 $\nu_t\in\cP_C(E)$ for all $t\geq 0$. { We denote by
   $T_N$  the first hitting time of $\{W\geq N\}$, i.e.
   \[T_N=\inf\{t\geq 0,\ W(X_t)\geq N\}.\] Note that $T_N$ is a
   stopping time for the natural filtration of the process (see for
   instance Theorem~2.4 in~\cite{Bass2010}). Using the fact that
 $(Q-I)\cdot W\leq (\theta-1) W +K$ and Dynkin's formula, we obtain
 that, for all $x\in E$ and all $0{\leq}s<t$,
 \begin{multline}
   \label{eqDynkinLocal}
   \E_x\big[\mathrm e^{(1-c_1) { [}(t-s)\wedge T_N{]}} W(X_{(t-s)\wedge
     T_N})\1_{(t-s)\wedge T_N<\tau_\d}\big]\\
   = W(x)+ \E_x\left[\int_s^{t\wedge (s+T_N)} \mathrm
   e^{(1-c_1) (u-s)}\left((\theta-c_1)
    W(X_{u-s})\1_{u-s<\tau_\d}+K\right)\,\mathrm du\right]
\end{multline}
The same computation with $c_1$ replaced by $\theta$ {and $s=0$} shows that, for
any fixed $t\geq 0$, $\E_x[W(X_{t\wedge T_N})\1_{t\wedge T_N<\tau_\d}]$
is uniformly bounded over $N\geq 1$, so that,
\[
\P_x(T_N\leq t)\leq \E_x\left[\frac{W{(X_{t\wedge T_N})}}{N}\1_{t\wedge T_N<\tau_\d}\right]\xrightarrow[N\rightarrow+\infty]{} 0,
\]
{where we have used Markov's inequality.}
This implies in particular that the almost surely non-decreasing
sequence $(T_N)_{N\geq 0}$ converges to $+\infty$ almost surely.
Using in addition Fatou's Lemma in the left-hand side
of~\eqref{eqDynkinLocal} and the monotone convergence theorem in the right-hand side (separating the $W$ term and the $K$ term and using the fact that $\theta<c_1$ and that $T_N$ is almost surely non-decreasing), we obtain}
 \[
   \E_x\big[\mathrm e^{(1-c_1) (t-s)} W(X_{t-s})\1_{t-s<\tau_\d}\big]
   \leq W(x)+\int_s^t \mathrm e^{(1-c_1) (u-s)}\left((\theta-c_1) \E_x\big[W(X_{u-s})\1_{u-s<\tau_\d}\big]+K\right)\,\mathrm du.
 \]
 Integrating with respect to the law of $X_s$ under $\P_{\mu_0}$ and using Fubini's theorem, we thus get that
 \begin{align*}
&\E_{\mu_0}\big[\mathrm e^{(1-c_1) t} W(X_t)\1_{t<\tau_\d}\big]\\
&\hspace{1cm}
\leq \E_{\mu_0}\big[\mathrm e^{(1-c_1) s}W(X_s)\1_{s<\tau_\d}\big]
+\int_s^t \mathrm e^{(1-c_1) u}\left((\theta-c_1) \E_{\mu_0}\big[W(X_u)\1_{u<\tau_\d}\big]+K\right)\,\mathrm du.
 \end{align*}
 This implies that
 $\E_{\mu_0}\big[\mathrm e^{(1-c_1) t} W(X_t)\1_{t<\tau_\d}\big]
 \leq \mu_0\cdot W \vee
 \frac{K}{c_1-\theta}$ (we detail the proof of this implication in Lemma~\ref{lem:tech} below) and, since $\P_{\mu_0}(t<\tau_\d)\geq e^{-(1-c_1)t}$, that
 $\nu_t\cdot W\leq \nu_0\cdot W\vee \frac{K}{c_1-\theta}$, for all
 $t\geq 0$, i.e.\ that $\nu_t\in\cP_{C\vee \frac{K}{c_1-\theta}}$ for
 all $t\geq 0$.

  \medskip\noindent \textit{Step 3. Weak continuity of the
    semi-group.}  Our aim is to prove the continuity of
  $(\mu_0,t)\mapsto \E_{\mu_0} f(X_t)$ for any bounded continuous
  functions $f:E\rightarrow\R$. We prove first the continuity of the
  application
  \[(x,t)\in E\times [0,+\infty)\mapsto \E_x f(X_t).\] Recall that
  $T_N$ is the first hitting time of $\{W\geq N\}$ and is a stopping
  time for the natural filtration of the process.
  We have, for all $x\in E$ and $t\geq 0$,
  \begin{align*}
    \left|\E_x f(X_t)-\E_x\left[f(X_{t\wedge T_N})\right]\right|
    &\leq 2\|f\|_\infty\, \P_x(T_N<t) 
    \leq {2} \|f\|_\infty\,\E_x\left[\frac{W(X_{t\wedge T_N})}{N}\right]\\
    &\leq {2}\|f\|_\infty\,e^{(B^{1/q}+1)\,t}W(x)/N,
  \end{align*}
  where the last inequality is a consequence of Assumption~(A'2-iv)
  and~\eqref{eq:latter_estimate}.  In particular, since $V$ is locally bounded,
  $(x,t)\mapsto \E_x f(X_t)$ is the locally-uniform limit (when
  $N\rightarrow+\infty$) of
  $(x,t)\mapsto \E_x\big[f(X_{t\wedge T_N})\big]$, which is continuous
  with respect to $(x,t)$ since it is the expectation of a pure jump
  Markov process with uniformly-bounded continuous jump measure. As a
  consequence, the application $(x,t)\mapsto \E_x f(X_t)$ is
  continuous (and bounded).

  Let us now prove that, for any bounded continuous function
  $f:E\rightarrow\R$, the function
  \[(\mu_0,t)\mapsto \E_{\mu_0} f(X_t) \]
  is continuous on $\cP_C(E)\times [0,+\infty)$, for all $C\geq 0$. Let
  $\mu_n\in \cP_C(E)\rightarrow \mu$ and $t_n\rightarrow t$
  when $n\rightarrow+\infty$ (note that $\mu\in \cP_C(E)$ since
  this set is closed for the topology of weak convergence). Then, we have
  \begin{align*}
    \big|\E_{\mu_n} f(X_{t_n}) -\E_\mu f(X_t)\big|
    &\leq  \big|\E_{\mu_n}\big[f(X_{t_n})-f(X_t)\big]\big|
    +\big|\E_{\mu_n} f(X_{t}) - \E_\mu f(X_t)\big|\\
                                            &\to 0 \quad \text{ when }n\to+\infty,
  \end{align*}
  where we used (for the first term in the right-hand side) the almost-sure
  continuity of $s\mapsto X_s$ at time $t$ and the dominated
  convergence theorem, and (for the second term in the right-hand side) 
  the continuity of $x\mapsto \E_x f(X_t)$ and the weak convergence
  of $\mu_n$ toward $\mu$.

  \medskip
  \noindent\textit{Step 4. Uniqueness.} Let $t\mapsto \mu_t$ be a solution
  to~\eqref{eq:dyn-sys} in $\cP_C(E)$ for some $C\geq 0$ and let us
  consider
  \[
    \theta_t:=\exp\left(\int_0^t \mu_s(Q-I)(E)\,\mathrm ds\right)\,\mu_t.
  \]
  By Assumption~(A'2-iii), $|\mu_s(Q-I)(E)|\leq A^{\nicefrac1q}\,\mu_s \cdot W+1\leq A^{\nicefrac1q}C+1$,
  so that $\theta_t$ is well defined for all $t\geq 0$. Moreover, for
  all bounded continuous functions $f:E\rightarrow \R$, $\theta_t\cdot f$ is
  differentiable and we have
  \[
    \frac{\d \theta_t\cdot f}{\d t}=\mu_t(Q-I)(E)\theta_t\cdot f+\theta_t
    Q\cdot f-\mu_tQ(E)\theta_t\cdot f=\theta_t(Q-I)\cdot f.
  \]
  Said differently, $\theta_t$ is solution
  to~\eqref{eq:FK-classical}.
{  Hence, for any continuous function $f$, we have
  \[
    \frac{\mathrm d\E_{\theta_s}f(X_{t-s})}{\mathrm ds}
    =\theta_s(Q-I)\cdot \E_\cdot f(X_{t-s})
    - \theta_s (Q-I)\cdot \E_\cdot f(X_{t-s})=0,
  \]
  where we used~\eqref{eq:FK-classical} for $(\theta_t)_t$ to handle the
  first right-hand-side term (recall that $x\mapsto \E_x f(X_{t-s})$
  is bounded continuous) and the backward Kolmogorov equation for the
  second right-hand-side term (see for instance Theorem~2.21
  in~\cite{Chen2004}). This implies that
  $\theta_t\cdot f=\E_{\theta_0}f(X_t)$ and hence that
  \[
    \mu_0\cdot f=\frac{\theta_t\cdot f}{\theta_t(E)}=\frac{\E_{\mu_0}f(X_t)}{\P_{\mu_0}(X_t\neq \d)}=\nu_t\cdot f
  \]
  for all $t\geq 0$ and all bounded continuous functions
  $f:E\rightarrow\R$.  This implies that $\mu=\nu$, which is thus the
  unique  solution of~\eqref{eq:dyn-sys}.}
\end{proof}

In Step 2 of the proof above, we used the following technical lemma:
\begin{lemma}\label{lem:tech}
  Let $g:[0,+\infty)\rightarrow \R$ and $f:\R\times \R \rightarrow \R$ be two measurable functions such that $t\in[0,+\infty)\mapsto f(t,g(t))\in \R$ is locally integrable. If
    \[g(t)-g(s)\leq \int_s^t f(u,g(u))\,\mathrm du,\quad\forall 0\leq
      s\leq t\] and if there exists $M\in\R$ such that
    $f(u,g(u))\leq 0$ for all $u\in [0,+\infty)$ such that $g(u)\geq M$. Then
    \[g(t)\leq g(0)\vee M,\quad \forall t\geq 0.\]
\end{lemma}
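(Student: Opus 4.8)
I would argue by contradiction, using the sign hypothesis on $f$ to prevent $g$ from rising above the level $C:=g(0)\vee M$. So suppose there is $t_0>0$ (necessarily $t_0>0$, since $g(0)\le C$) with $g(t_0)>C$, and look at the ``last time'', in a supremum sense, that $g$ was still at most $C$ before $t_0$. Concretely, first I would set $A:=\{s\in[0,t_0]:g(s)\le C\}$, note that $0\in A$ so $A\neq\varnothing$, and define $\tau:=\sup A\in[0,t_0]$. The role of $\tau$ is that every $u\in(\tau,t_0]$ lies outside $A$, hence satisfies $g(u)>C\ge M$, so the hypothesis forces $f(u,g(u))\le 0$ on all of $(\tau,t_0]$, and therefore $\int_\tau^{t_0}f(u,g(u))\,du\le 0$ (this integral being $0$ when $\tau=t_0$).

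Next I would choose a sequence $(s_n)_{n\ge1}$ in $A$ with $s_n\to\tau$ (taking $s_n\equiv\tau$ if the supremum is attained, otherwise $s_n\uparrow\tau$), so that $g(s_n)\le C$ and $s_n\le\tau\le t_0$. Feeding $s=s_n$ and $t=t_0$ into the assumed integral inequality and splitting at $\tau$ gives
\[
g(t_0)\le g(s_n)+\int_{s_n}^{\tau}f(u,g(u))\,du+\int_{\tau}^{t_0}f(u,g(u))\,du\le C+\int_{s_n}^{\tau}f(u,g(u))\,du .
\]
Since $u\mapsto f(u,g(u))$ is locally integrable, the map $t\mapsto\int_0^t f(u,g(u))\,du$ is absolutely continuous, so $\int_{s_n}^{\tau}f(u,g(u))\,du\to 0$ as $n\to\infty$; letting $n\to\infty$ yields $g(t_0)\le C$, contradicting $g(t_0)>C$. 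Hence $g(t)\le g(0)\vee M$ for all $t\ge 0$.

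\textbf{Main difficulty.} There is no serious obstacle here; the only care needed is bookkeeping forced by the fact that $g$ is merely measurable. In particular one cannot assume that the supremum defining $\tau$ is attained, nor that $g(\tau)\le C$, which is exactly why the argument runs through the approximating sequence $(s_n)$ and why the integral over $[s_n,\tau]$ is controlled by absolute continuity of the integral rather than by any regularity of $g$ itself. One should also keep the degenerate case $\tau=t_0$ in mind, but there the interval $(\tau,t_0]$ is empty, the term $\int_\tau^{t_0}f(u,g(u))\,du$ simply vanishes, and the same estimate applies verbatim, so no separate case is really needed.
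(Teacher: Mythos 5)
Your proof is correct and follows essentially the same strategy as the paper's: argue by contradiction, take the last time before the exceedance at which $g$ is below the threshold, use the sign hypothesis on the remaining interval, and handle the possible non-attainment of that supremum by an approximating sequence together with the (absolute) continuity of $t\mapsto\int_0^t f(u,g(u))\,\mathrm du$. Your version is slightly streamlined — it works with a fixed point where $g$ exceeds $g(0)\vee M$ rather than with the paper's $\varepsilon$-level and infimum exceedance time, and it feeds the approximating sequence directly into the integral inequality instead of first establishing $g(\tau)\leq g(0)\vee M$ — but these are cosmetic differences, not a different argument.
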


\begin{proof}
  We assume without loss of generality that $M\geq g(0)$ and proceed by
  contradiction: assume that there exist $\varepsilon>0$ and
  $t\geq 0$ such that $g(t)\geq M+\varepsilon$ and let
  $t_0=\inf\{t\geq 0\text{ s.t. }g(t)\geq M+\varepsilon\}$.
  Note that, for all $t\geq t_0$,
  \[g(t)\leq g(t_0)+\int_{t_0}^t f(u,g(u))\,\mathrm du\xrightarrow[t\downarrow t_0]{} g(t_0),\]
  and hence $g(t_0)\geq M+\varepsilon$. Now, let $s_0=\sup\{s\leq t_0\text{ s.t. }g(s)\leq M\}$, and note that
    \[g(s_0)\leq \liminf_{s\uparrow s_0} \,\left\{g(s) + \int_s^{s_0} f(u,g(u))\,\mathrm du\right\}=\liminf_{s\uparrow s_0} g(s),\]
implying that $g(s_0)\leq M$.  Finally, since $g(s)\in [M,M+\varepsilon]$ for all $s\in [s_0,t_0]$,  we have
    \[
      M+\varepsilon\leq g(t_0)\leq g(s_0) +\int_{s_0}^{t_0} f(u,g(u))\,\mathrm du\leq g(s_0)\leq M.\qedhere
    \]
\end{proof}

We are now ready to prove Proposition~\ref{prop:eta}:
\begin{proof}[Proof of Proposition~\ref{prop:eta}]

  Our approach is based on \cite{Benaim1999} (see also
  \cite{BenaimLedouxEtAl2002} for an application of this theorem on a
  set of probability measures on a compact space).  In view
  of~\cite[Lemma~3.1]{Varadarajan1958}, since $E$ is separable by
  assumption, there exists a metrization of the topology of~$E$ such
  that~$E$ is totally bounded (this distance is imposed on~$E$ from
  now on). Also, still by \cite[Lemma~3.1]{Varadarajan1958}, there
  exists a family of bounded uniformly continuous
  functions~$(g_k)_{k\geq 1}$ that is dense in
  $U(E,\R)$, the set of all bounded uniformly-continuous 
  functions from~$E$ to~$\R$.
  Finally, \cite[Lemma~3.1]{Varadarajan1958} also states that a sequence $(\mu_n)_{n\in\N}$ of non-negative measures converges weakly to~$\mu$ if and only if
  $\mu_n\cdot g_k\rightarrow \mu\cdot g_k$ when $n\rightarrow+\infty$,
  for all $k\in\N$. 
  We also consider the function
  $g_0:x\in E \mapsto Q_x(E)$, which is continuous by Assumption~(A4)
  and bounded by Assumption~(A1), and the family of functions indexed
  by $k,M\in\N$ defined by
  \begin{align*}
    g_k^M(x)=-M\vee \left(Q\cdot g_k(x) \wedge M\right)
  \end{align*}
  and which are continuous (by Assumption~(A4)) and bounded. In
  particular, the distance
  \begin{align*}
    d(\mu_1,\mu_2)&=|\mu_1 Q(E)-\mu_2 Q(E)|+\sum_{k=1}^\infty \frac{|\mu_1\cdot g_k-\mu_2\cdot g_k|\wedge 1}{2^k(1+\|g_k\|_\infty)} + \sum_{k=1, M=1}^\infty \frac{|\mu_1\cdot g^M_k-\mu_2\cdot g^M_k|\wedge 1}{2^{k+M}(1+\|g^M_k\|_\infty)}
  \end{align*}
  is a metric for the weak convergence in the set of non-negative
  measures on $E$.

  We introduce the increasing sequence $(\tau_n)_{n\geq 1}$ defined as
\begin{align*}
\tau_n=\gamma_1+\gamma_2+\cdots+\gamma_n,
\end{align*}
(see Lemma~\ref{lem:algo_sto} for the definition of $\gamma_n$) 
and we consider the time-changed and linearized versions
$(\bar\mu_t)_{t\in[1,+\infty)}$ and $(\mu_t)_{t\in[1,+\infty)}$ of
$(\tilde \eta_n)_{n\in\N}$ defined, for all $n\geq 1$ and all
$t\in \left[\tau_n,\tau_{n+1}\right]$, by
\begin{align*}
    \bar\mu_t =\tilde\eta_n\quad\text{and}\quad \mu_t=\tilde \eta_n+\frac{t-\tau_n}{\tau_{n+1}-\tau_n}(\tilde \eta_{n+1}-\tilde \eta_n).
\end{align*}
Similarly, we define $\bar{U}_t=U_{n+1}$ for all $t\in\left[\tau_n,\tau_{n+1}\right]$ (see Lemma~\ref{lem:algo_sto} for the definition of $U_n$).

To prove that $(\mu_t)_{t\geq 0}$ is an asymptotic
pseudo-trajectory of the semi-flow induced by~\eqref{eq:dyn-sys},
we apply~\cite[Theorem~3.2]{Benaim1999} (and refer the reader
to~\cite{Benaim1999} for the definition of an asymptotic
pseudo-trajectory).

Note that $\mu_t\in\cP_C(E)$ for all $t\geq 0$, and hence
$(\mu_t)_{t\geq 0}$ has compact closure in $\cP_C(E)$ (since this set
is itself compact). Also, by construction, $t\mapsto \mu_t$ is
uniformly continuous (and even Lipschitz) with respect to the distance $d$ on
$\cP_C(E)$. { Indeed, for all $s,t\in[\tau_n,\tau_{n+1}]$,
  \begin{align*}
    d(\mu_s,\mu_t)&=\frac{t-s}{\tau_{n+1}-\tau_n}d(\tilde\eta_{n+1},\tilde\eta_n)=\frac{t-s}{\gamma_{n+1}}d(\tilde\eta_{n+1},\tilde\eta_n)\leq (t-s)\,(2\|Q(E)\|_\infty+4),
  \end{align*}
  where we have used the fact (see Lemma~\ref{lem:algo_sto}) that, for all
  bounded measurable function $g:E\rightarrow\R_+$,
  \[
    \left|\frac{\tilde\eta_{n+1}\cdot g-\tilde\eta_n\cdot
        g}{\gamma_{n+1}}\right|
    =\left|\tilde\eta_n
      Q(E)(g(Y_{n+1})-\tilde\eta_n\cdot g)\right|\leq 2\|g\|_\infty.
      \]
} Therefore, to apply \cite[Theorem~3.2]{Benaim1999}, it only remains
to prove that all limit points of $(\Theta_t(\mu))_{t\geq 0}$ in
$C(\R_+,\cP_C(E))$ endowed with the topology of uniform convergence on
compact sets are solutions of~\eqref{eq:dyn-sys}, where
$ \Theta_t(\mu):=(\mu_{t+s})_{s\geq 0}$.  Let
$\mu^{\infty}\in C(\R_+,\cP_C(E))$ be such a limit point: in other
words, we assume that there exists an increasing sequence of positive
numbers $(t_n)_{n\geq 0}$ converging to $+\infty$ such that
$(\Theta_{t_n}(\mu))_{n\geq 0}$ converges to $\mu^\infty$ in
$C(\R_+,\cP_C(E))$.

For all $t\in[\tau_n,\tau_{n+1})$ and all $s\geq 0$ such that $t+s\in[\tau_m,\tau_{m+1})$, we deduce from Lemma~\ref{lem:algo_sto} that
\begin{align}
&\int_t^{t+s} F(\bar\mu_u)+\bar U_u\,\mathrm du \\
&\hspace{1cm}= (\tau_{n+1}-t)(F(\tilde\eta_n)+U_{n+1})+\sum_{k=n+1}^{m-1} \gamma_{k+1} (F(\tilde\eta_k) +U_{k+1})+(t+s-\tau_m)(F(\tilde\eta_m)+U_{m+1})\nonumber\\
&\hspace{1cm}= \frac{\tau_{n+1}-t}{\tau_{n+1}-\tau_n}(\tilde\eta_{n+1}-\tilde\eta_n)+\tilde\eta_m-\tilde\eta_{n+1}+\frac{t+s-\tau_m}{\tau_{m+1}-\tau_m}(\tilde\eta_{m+1}-\tilde\eta_m)\nonumber\\
&\hspace{1cm}=-\mu_t+\mu_{t+s}\label{eq:usefulBenaim}
\end{align}
For all $k\in\N$, we define
$L_F^k:C(\R_+,\cP_C(E))\rightarrow \R^{[0,+\infty)}$ by
\begin{align*}
  L_F^k(\nu)(t)=\nu_0+\int_0^t F(\nu_s)\cdot g_k\,\mathrm ds,
\end{align*}
for any $\nu\in C(\R_+,\cP_C(E))$ (see Lemma~\ref{lem:algo_sto} for the definition of the function~$F$), so that, by Equation~\eqref{eq:usefulBenaim},
\begin{equation}\label{eq:Theta_t}
 \Theta_t(\mu)\cdot g_k=L_F^k\big(\Theta_t(\mu)\big)+A^k_t+B^k_t,
\end{equation}
where, for all $s\geq 0$,
\begin{align*}
A^k_t(s)=\int_t^{t+s} F(\bar\mu_u)\cdot g_k-F(\mu_u)\cdot g_k\,\mathrm du\quad\text{and}\quad B^k_t(s)=\int_t^{t+s} \bar{U}_u\cdot g_k\,\mathrm du.
\end{align*}

The rest of the proof is divided into four steps: 
The first two steps are devoted to prove that $A^k_t$ and, respectively,~$B^k_t$ converge uniformly to~$0$ on compact sets when $t\rightarrow+\infty$.
In the third step, we prove that $L_F^k(\Theta_{t_n}(\mu))$ converges to $L_F^k(\mu^\infty)$ for all subsequence $t_n\to+\infty$ such that
$(\Theta_{t_n}(\mu))_{n\geq 0}$ converges to $\mu^\infty$ in
$C(\R_+,\cP_C(E))$. 
Finally, in the fourth step, we conclude the proof of Proposition~\ref{prop:eta}.

\medskip
\textit{Step 1: $A_t^k$ converges to $0$.}
 For all
$u\in [\tau_n,\tau_{n+1})$, we have
\begin{align*}
  &\big|F(\bar\mu_u)\cdot g_k-F(\mu_u)\cdot g_k\big|\\
  &\hspace{.5cm}\leq\big |\bar\mu_u Q\cdot g_k-\mu_uQ\cdot g_k\big|
    +\big|\bar\mu_u Q(E)\bar\mu_u\cdot g_k-\mu_uQ(E)\mu_u\cdot g_k\big|\\
  &\hspace{.5cm}\leq  \big|\tilde\eta_{n+1} Q\cdot g_k - \tilde\eta_n Q\cdot g_k\big|
    +\|g_k\|_\infty \big|\bar\mu_u Q(E)-\mu_u Q(E)\big|
    + \big|\bar\mu_u\cdot g_k-\mu_u\cdot g_k\big|\\
  &\hspace{.5cm}\leq  \big|\tilde\eta_{n+1} Q\cdot g_k - \tilde\eta_n Q\cdot g_k\big|
    +\|g_k\|_\infty   \big|\tilde\eta_{n+1} Q(E) -\tilde\eta_n Q(E)\big|
    +\big|\tilde\eta_{n+1}\cdot g_k-\tilde\eta_n\cdot g_k\big|\\
  &\hspace{.5cm}\leq \frac{1}{n+1}\left|Q_{Y_{n+1}}\cdot g_k-\tilde\eta_n Q\cdot g_k\right|+\frac{\|g_k\|_\infty}{n+1}\big|Q_{Y_{n+1}}(E)-\tilde\eta_n Q(E)\big|+\frac{1}{n+1}\big|g_k(Y_{n+1})-\tilde\eta_n\cdot g_k\big|\\
  &\hspace{.5cm}\leq \frac{\|g_k\|_\infty}{n+1}(B^{\nicefrac1q}V(Y_{n+1})^{\nicefrac1q}+B^{\nicefrac1q}C+1+2) 
\end{align*}
where we used Assumptions~(A'2-iii) and (A1) and the fact that,
almost surely, $\eta_m\in\cP_C(E)$ for all $n\geq 0$. Hence, if we denote by 
$n_t$ the unique integer such that
$t\in\ [\tau_{n_t},\tau_{n_t+1})$,
for any $t\geq 0$ (such an integer exists since
$\tau_n\rightarrow+\infty$ when $n\rightarrow+\infty$), 
we have, for all $s\geq 0$,
\begin{align*}
  A_t^k(s)&\leq \frac{\|g\|_k}{n_t+1} \sum_{k=n_t}^{n_{t+s}} \gamma_{k+1} B^{\nicefrac1q}V(Y_{k+1})^{\nicefrac1q}+ \frac{\|g\|_k(B^{\nicefrac1q}C+3)s}{n_t+1}\\
          &\leq \frac{\|g\|_k}{n_t+1} \frac{B^{\nicefrac1q}}{c_1} \tilde\eta_{n_{t+s}+1}\cdot V^{\nicefrac1q}+ \frac{\|g\|_k(B^{\nicefrac1q}C+3)s}{n_t+1},
\end{align*}
where we used that  $\gamma_n \leq 1/(c_1 n)$, for all
$n\geq 1$, by Assumption~(A1). Finally, for all $T\geq 0$, we have
\begin{align*}
 \sup_{s\in[0,T]} |A^k_t(s)|\leq \frac{T\,\|g_k\|_\infty \big(B^{\nicefrac1q} C+B^{\nicefrac1q} C/c_1 + 3\big)}{n_t+1}\to 0 \quad\text{ when }t\to+\infty.
\end{align*}

\medskip
\textit{Step 2: $B_t^k$ converges to $0$.}
We have, for all $t\in[\tau_n,\tau_{n+1})$ and $t+s\in [\tau_{n+m},\tau_{n+m+1})$, 
\begin{align*}
  |B^k_t(s)|&\leq (\tau_{n+1}-t) |U_{n+1}\cdot g_k| + \left|\sum_{\ell=n+1}^{n+m-1} \gamma_{\ell+1} U_{\ell+1} \cdot g_k\right| + (s-\tau_{n+m}) \big|U_{n+m+1}\cdot g_k\big|\\
  &\leq \gamma_{n+1} \big|U_{n+1}\cdot g_k\big| + \left|\sum_{\ell=n+1}^{n+m-1} \gamma_{\ell+1} U_{\ell+1} \cdot g_k\right| + \gamma_{n+m+1} \big|U_{n+m+1}\cdot g_k\big|.
\end{align*}
Using a similar approach as in the proof of Lemma~\ref{lem:U}, one
easily obtains that, for any bounded continuous function
$f:E\rightarrow \R$, $\sum_{\ell=0}^n \gamma_{\ell+1} U_{\ell+1}\cdot f$
converges almost surely when $n\rightarrow +\infty$. Hence, we have that, almost surely,
\begin{align*}
\lim_{n\rightarrow+\infty} \sup_{m\geq 1} \left\{\gamma_{n+1} |U_{n+1}\cdot g_k|+\left|\sum_{\ell=n+1}^{n+m-1} \gamma_{\ell+1}U_{\ell+1}\cdot g_k\right|+\gamma_{n+m+1} |U_{n+m+1}\cdot g_k|\right\}=0.
\end{align*}
In particular, we have that, for all $T\geq 0$,
\begin{align*}
 \sup_{s\in[0,T]} \big|B^k_t(s)\big|\to 0 \quad\text{ when }t\to+\infty.
\end{align*}

\medskip \textit{Step 3: $L_F^k(\Theta_{t_n}(\mu))$ converges to
  $L_F^k(\mu^\infty)$ for all subsequence $t_n\to+\infty$ such that
$(\Theta_{t_n}(\mu))_{n\geq 0}$ converges to $\mu^\infty$ in
$C(\R_+,\cP_C(E))$.} To prove this, it is enough to show that $L_F^k$ is sequentially continuous in $C\big(\R_+,\cP_C(E)\big)$. 
  Let $(\nu^n)_{n\geq 0}$ be a sequence of elements of $C\big(\R_+,\cP_C(E)\big)$
which converges to $\nu\in C\big(\R_+,\cP_C(E)\big)$. For all $n\geq 0$ and all
$t\geq 0$, we have
\begin{align}
  \label{eq:continuity-lfk}
  \left|L_F^k(\nu^n)(t)-L_F^k(\nu)(t)\right|
  \leq \left|\nu^n_0\cdot g_k - \nu_0\cdot g_k\right|
  + \int_0^t |F(\nu^n_s)\cdot g_k -F(\nu_s)\cdot g_k|\,\mathrm ds.
\end{align}
The first term of the right-hand side converges to~$0$ because of the
weak convergence of $(\nu^n_0)_{n\geq 0}$ to $\nu$. Let us now focus
on the second term of the right-hand side; we have
\begin{align*}
  \big|F(\nu^n_s)\cdot g_k -F(\nu_s)\cdot g_k\big|
  \leq \big|\nu^n_s Q\cdot g_k-\nu_s Q\cdot g_k\big|
  +\big|\nu^n_s Q(E)\nu^n_s\cdot g_k-\nu_sQ(E)\nu_s g_k\big|.
\end{align*}
Since $\nu^n$ converges uniformly on compact sets toward $\nu$, we
deduce that the term
$s\mapsto \big|\nu^n_s Q(E)\nu^n_s\cdot g_k-\nu_sQ(E)\nu_s g_k\big|$ converges
uniformly to~$0$ on compact sets when $n\to+\infty$ (we use
here the fact that $g_0=Q_\cdot(E)$ appears in the distance
$d$). Moreover, since $\nu^n_s\in \cP_C(E)$ and since
$|Q\cdot g_k|\leq B^{\nicefrac1q'}\|g_k\|_\infty W^{\nicefrac q{q'}}$ by
Assumption~(A2-iii) (recall that $W:=V^{\nicefrac1q}$), 
we deduce that, for all $M\geq 1$,
\begin{align*}
  \big|\nu^n_s Q\cdot g_k-\nu_s Q\cdot g_k\big|
  & \leq \big|(\nu^n_s-\nu_s) g^M_k\big| 
  + (\nu^n_s+\nu_s)\big|Q\cdot g_k - g^M_k\big|\\
  & \leq \big|(\nu^n_s-\nu_s) g^M_k\big| 
  + (\nu^n_s+\nu_s)\big|Q\cdot g_k \1_{|Q\cdot g_k|>M}\big|\\
  & \leq \big|(\nu^n_s-\nu_s) g^M_k\big| 
  + B^{1/q'} \|g_k\|_\infty (\nu^n_s+\nu_s)\big|W^{q/q'} \1_{B^{1/q}\|g_k\|_\infty^{q'/q} W > M^{q'/q}}\big|\\
  & \leq \big|(\nu^n_s-\nu_s) g^M_k\big| 
  + \frac{B^{1/q} \|g_k\|^{q'/q}_\infty}{M^{q'/q-1}} (\nu^n_s+\nu_s)(W)\\
  & \leq |(\nu^n_s-\nu_s) g^M_k| + \frac{B^{1/q} \|g_k\|^{q'/q}_\infty\,2C}{M^{q'/q-1}},
\end{align*}
where we have used the fact that $\nu^n_s\in\cP_C(E)$ for all
$n\in \N$ and all $s\geq 0$. The term
$\frac{B^{1/q} \|g_k\|^{q'/q}_\infty\,2C}{M^{q'/q-1}}$ goes to $0$ when
$M\rightarrow+\infty$ uniformly in $s\geq 0$ and the term
$|(\nu^n_s-\nu_s) g^M_k|$ converges to $0$ uniformly in $s$ in compact
sets. As a consequence, we deduce that
$|\nu^n_s Q\cdot g_k-\nu_s Q\cdot g_k|$ converges to $0$ uniformly in
$s$ in compact sets. This allows us to conclude that the second term
of the right hand side of~\eqref{eq:continuity-lfk} converges to $0$
when $n\rightarrow+\infty$, which was the aim of Step~3.

\medskip\textit{Step 4: conclusion.} Steps 1 to 3 above entail that
any limit point $\mu^\infty$ of $(\Theta_t(\mu))_{t\geq 0}$ satisfies
\[\mu^\infty_t\cdot g_k = \mu^\infty_0\cdot g_k 
+ \int_0^t F(\mu^\infty_s)\cdot g_k\,\mathrm ds\quad (\forall k\geq 1).\]
Since $(g_k)_{k\geq 1}$ is dense in the set $U(E,\R)$, we conclude
(see for instance \cite[Lemma~2.3]{Varadarajan1958}) that
\[\mu^\infty_t = \mu^\infty_0 + \int_0^t F(\mu^\infty_s)\,\mathrm ds.\]
As a consequence, $\mu^\infty$ is solution to the dynamical
system~\eqref{eq:dyn-sys}. Using \cite[Theorem~3.2]{Benaim1999}, we
deduce that $(\mu_t)_{t\geq 0}$ is a pseudo asymptotic trajectory in
$\cP_C(E)$ for the semi-flow induced by the well-posed dynamical
system~\eqref{eq:dyn-sys} in $\cP_C(E)$. Therefore,
Assumption~(A3)
entails that the set of limit points of $(\mu_t)_{t\geq 0}$ is
included in the uniformly attracting set $\{\nu\}$ of the semi-flow generated
by~\eqref{eq:dyn-sys}. In particular, the only limit point of the
compact sequence $(\tilde \eta_n)_{n\geq 1}$ is $\nu$. This concludes
the proof of Proposition~\ref{prop:eta}.
\end{proof}

\begin{remark}
  Without Assumption~(A3), we still get that $(\mu_t)_{t\geq 0}$ is a
  pseudo asymptotic trajectory in $\cP_C(E)$ for the semi-flow induced
  by the well-posed dynamical system~\eqref{eq:dyn-sys} in
  $\cP_C(E)$. In particular, the set of limit points of
  $(\mu_t)_{t\geq 0}$ is included in the limit sets of the flow
  (see~\cite[Section~5.2]{Benaim1999}).
\end{remark}

\subsection{Proof of Theorem~\ref{thm:unbal-with-weights} from
  Proposition~\ref{prop:eta}}
  Fix $c'\in(\theta,c_1)$. For all $k\geq 1$, we define
  \[
    \sigma_{k}:=\inf\big\{n\geq k,\ m_nP(E)< c'n\big\}
  \]

  For all $n\geq 1$ and any bounded continuous function $f:E\rightarrow\R$, we set
  $\Psi_n=m_{n\wedge\sigma_k} \cdot f -\eta_{n\wedge\sigma_k} R \cdot f$, so
  that $(\Psi_n)_{n\geq 1}$ is a martingale and
  \[
    \Psi_{n} = m_0 \cdot f + \sum_{i=1}^{n\wedge\sigma_k} \big( {R}^{\sss (i)}_{Y_i}\cdot f - R_{Y_i}\cdot f\big).
  \]
  An immediate adaptation of Theorem~1.3.17 in~\cite{Duflo} tells us
  that if the sequence
  $(n^{-1} \mathbb E \left[\left|\Psi_n\right|^r\right])_{n\geq
    1}$ is bounded, then $n^{-1}\Psi_n$ goes
  almost surely to zero when $n$ goes to infinity.  We have, using
  Lemma~1 in~\cite{Chatterji1969},
  \begin{align*}
    \frac{\E\left[|\Psi_n|^r\right]}{n}
    &\leq \frac{2(m_0\cdot f)^r}{n} 
      + \frac{2}{n} \sum_{i=1}^n \mathbb E\big[\big| {R}^{\sss (i)}_{ {Y_i}} \cdot f - R_{  {Y_i}}\cdot f\big|^r\1_{i\leq \sigma_k}\big]\\
    &\leq \frac{2(m_0\cdot f)^r}{n} 
      + \frac{2\|f\|_\infty^r}{n} \sum_{i=1}^n A\,\mathbb E\big[V(Y_i)\1_{i\leq\sigma_k}\big]
  \end{align*}
  where we used the fact that $\1_{i\leq\sigma_k}$ is $\mathcal F_{i-1}$-measurable and independent of $Y_i$ and  Assumption~(A'2-iii).
  
  Using Lemma~\ref{lem:C}, we deduce that the sequence $\big(n^{-1}\E\left[|\Psi_n|^r\right]\big)_n$ 
  is uniformly bounded and hence that $n^{-1}\Psi_n$ goes almost surely to zero when $n$ goes to infinity 
  (since we have assumed, in particular, that $m_0\cdot V < +\infty$, which entails $m_0(E)<\infty$).

  Since this is true for any $k\geq 1$ and since
  $\P(\cup_{k=1}^\infty \{\sigma_k=+\infty\})=1$ (see
  Lemma~\ref{lem:cv_sigma_k}), we deduce that, almost surely,
  $ {m}_n(f) = {\eta}_n R (f) + o(n)$ when $n$ goes to infinity. In
  view of Proposition~\ref{prop:eta}, and by Assumption (A4) (namely
  continuity of $R$), we get that~$( {\eta}_n R\cdot f/n)_{n\geq 1}$
  and~$( {\eta}_n R(E)/n)_{n\geq 1}$ converge almost surely
  to~$\nu R\cdot f$ and~$\nu R(E)$ respectively, which concludes the
  proof of the first part and the last part of
  Theorem~\ref{thm:unbal-with-weights}.

\bigskip
   To get the almost-sure boundedness of $m_n P\cdot V^{\nicefrac1q}/n$, 
  recall that, by definition, $m_n = m_0 + \sum_{i=1}^n R^{\sss (i)}_{Y_i}$, 
  implying that, for all $n\geq 0$,
  \[m_nP\cdot V^{\nicefrac1q} 
  = m_0P\cdot V^{\nicefrac1q} + \sum_{i=1}^n Q^{\sss (i)}_{Y_i}\cdot V^{\nicefrac1q}.\]
  As above, we let
  \[\Phi_n = m_0P\cdot V^{\nicefrac1q} + \sum_{i=1}^{n\wedge \sigma_k} 
  \big(Q^{\sss (i)}_{Y_i}\cdot V^{\nicefrac1q}-Q_{Y_i}\cdot V^{\nicefrac1q}\big).\]
  The sequence $(\Phi_n)_{n\geq 0}$ is a martingale, and, similarly as above, we get that
  \begin{align*}
  \frac{\mathbb E |\Phi_n|^r}{n}
  &\leq \frac{2|m_0P\cdot V^{\nicefrac1q}|^r}{n} + \frac2n\sum_{i=1}^n 
  \mathbb E\big[\big|Q^{\sss (i)}_{Y_i}\cdot V^{\nicefrac1q}-Q_{Y_i}\cdot V^{\nicefrac1q}\big|^r\1_{i\leq \sigma_k}\big]\\
  &\leq \frac{2|m_0P\cdot V^{\nicefrac1q}|^r}{n} + \frac{2B}n\sum_{i=1}^n 
  \mathbb E\big[V(Y_i)\1_{i\leq \sigma_k}\big].
  \end{align*}
  Using Lemma~\ref{lem:C}, we imply that $(\mathbb E |\Phi_n|^r/n)_{n\geq 0}$ is uniformly bounded, and thus that $\Phi_n/n$ converges almost surely to~$0$ when $n\to\infty$. Therefore, we have that, almost surely when $n\to\infty$,
  \[\frac{m_nP\cdot V^{\nicefrac1q}}{n} = \frac1n\sum_{i=1}^n Q_{Y_i}\cdot V^{\nicefrac1q} + o(1)= \tilde \eta_n Q\cdot V^{\nicefrac1q} + o(1).\]
  Note that, by Assumption~(A'2-iv), we have
  \[|\tilde \eta_n Q\cdot V^{\nicefrac1q}| \leq B^{\nicefrac1q}\tilde \eta_n \cdot V^{\nicefrac1q},\]
  and recall that, by Equation~\eqref{eq:bound-of-eta_n-V-weight}, $\tilde \eta_n \cdot V^{\nicefrac1q}$ is almost surely uniformly bounded. We can thus conclude that $m_n P\cdot V^{\nicefrac1q}/n$ is almost surely uniformly bounded, as claimed.

  \section*{Acknowledgment}
  The authors would like to thank Michel Bena\"im, Pascal Maillard and Andi Q.\ Wang for their useful
  comments and suggestions on this paper. CM is grateful to EPSRC for support through the fellowship EP/R022186/1.

\bibliographystyle{abbrv}
\bibliography{biblio-PU}

\begin{thebibliography}{10}

\bibitem{Aldous91}
D.~Aldous.
\newblock Asymptotic fringe distributions for general families of random trees.
\newblock {\em The Annals of Applied Probability}, pages 228--266, 1991.

\bibitem{AldousFlanneryEtAl1988}
D.~Aldous, B.~Flannery, and J.~L. Palacios.
\newblock Two applications of urn processes the fringe analysis of search trees
  and the simulation of quasi-stationary distributions of markov chains.
\newblock {\em Probability in the engineering and informational sciences},
  2(3):293--307, 1988.

\bibitem{AK}
K.~B. Athreya and S.~Karlin.
\newblock Embedding of urn schemes into continuous time {M}arkov branching
  processes and related limit theorems.
\newblock {\em Annals of Mathematical Statistics}, 39:1801--1817, 1968.

\bibitem{BJT}
A.~Bandyopadhyay, S.~Janson, and D.~Thacker.
\newblock Strong convergence of infinite color balanced urns under uniform
  ergodicity.
\newblock {\em ArXiv:1904.06144}, 2019.

\bibitem{BT++}
A.~{Bandyopadhyay} and D.~{Thacker}.
\newblock {A New Approach to {P}\'olya Urn Schemes and Its Infinite Color
  Generalization}.
\newblock {\em Arxiv:1606.05317}, June 2016.

\bibitem{Bass2010}
R.~Bass.
\newblock The measurability of hitting times.
\newblock {\em Electron. Commun. Probab.}, 15:99--105, 2010.

\bibitem{Benaim1999}
M.~Bena\"im.
\newblock Dynamics of stochastic approximation algorithms.
\newblock In {\em S\'eminaire de {P}robabilit\'es, {XXXIII}}, volume 1709 of
  {\em Lecture Notes in Mathematics}, pages 1--68. Springer, Berlin, 1999.

\bibitem{BCV}
M.~Bena{\"\i}m, N.~Champagnat, and D.~Villemonais.
\newblock Stochastic approximation of quasi-stationary distributions for
  diffusion processes in a bounded domain.
\newblock {\em ArXiv:1904.08620}, 2019.

\bibitem{BC15}
M.~Bena{\"\i}m and B.~Cloez.
\newblock A stochastic approximation approach to quasi-stationary distributions
  on finite spaces.
\newblock {\em Electronic Communications in Probability}, 20, 2015.

\bibitem{BCP++}
M.~Benaim, B.~Cloez, and F.~Panloup.
\newblock Stochastic approximation of quasi-stationary distributions on compact
  spaces and applications.
\newblock {\em The Annals of Applied Probability}, 28(4):2370--2416, 2018.

\bibitem{BenaimLedouxEtAl2002}
M.~Bena\"im, M.~Ledoux, and O.~Raimond.
\newblock Self-interacting diffusions.
\newblock {\em Probability Theory and Related Fields}, 122(1):1--41, 2002.

\bibitem{BFS92}
F.~Bergeron, P.~Flajolet, and B.~Salvy.
\newblock Varieties of increasing trees.
\newblock In {\em Colloquium on Trees in Algebra and Programming}, pages
  24--48. Springer, 1992.

\bibitem{BlanchetGlynnEtAl2016}
J.~Blanchet, P.~Glynn, and S.~Zheng.
\newblock Analysis of a stochastic approximation algorithm for computing
  quasi-stationary distributions.
\newblock {\em Advances in Applied Probability}, 48(3):792--811, 2016.

\bibitem{BGZ16}
J.~Blanchet, P.~Glynn, and S.~Zheng.
\newblock Analysis of a stochastic approximation algorithm for computing
  quasi-stationary distributions.
\newblock {\em Advances in Applied Probability}, 48(3):792--811, 2016.

\bibitem{Bona14}
M.~B{\'o}na.
\newblock k-protected vertices in binary search trees.
\newblock {\em Advances in Applied Mathematics}, 53:1--11, 2014.

\bibitem{CattiauxColletEtAl2009}
P.~Cattiaux, P.~Collet, A.~Lambert, S.~Mart{\'{\i}}nez, S.~M{\'e}l{\'e}ard, and
  J.~San~Mart{\'{\i}}n.
\newblock Quasi-stationary distributions and diffusion models in population
  dynamics.
\newblock {\em Ann. Probab.}, 37(5):1926--1969, 2009.

\bibitem{ChampagnatVillemonais2017}
N.~{Champagnat} and D.~{Villemonais}.
\newblock {General criteria for the study of quasi-stationarity}.
\newblock {\em Arxiv:1712.08092}, Dec. 2017.

\bibitem{ChampagnatVillemonais2016}
N.~Champagnat, D.~Villemonais, et~al.
\newblock Uniform convergence to the $ q $-process.
\newblock {\em Electronic Communications in Probability}, 22, 2017.

\bibitem{Chatterji1969}
S.~D. Chatterji.
\newblock An {$L^{p}$}-convergence theorem.
\newblock {\em Annals of Mathematical Statistics}, 40:1068--1070, 1969.

\bibitem{Chen2004}
M.-F. Chen.
\newblock {\em From {M}arkov chains to non-equilibrium particle systems}.
\newblock World Scientific Publishing Co., Inc., River Edge, NJ, second
  edition, 2004.

\bibitem{CS08}
G.-S. Cheon and L.~W. Shapiro.
\newblock Protected points in ordered trees.
\newblock {\em Applied Mathematics Letters}, 21(5):516--520, 2008.

\bibitem{ColletMartinezEtAl2013}
P.~Collet, S.~Mart\'inez, and J.~San~Mart\'in.
\newblock {\em Quasi-stationary distributions}.
\newblock Probability and its Applications (New York). Springer, Heidelberg,
  2013.
\newblock Markov chains, diffusions and dynamical systems.

\bibitem{DarrochSeneta1967}
J.~N. Darroch and E.~Seneta.
\newblock On quasi-stationary distributions in absorbing continuous-time finite
  {M}arkov chains.
\newblock {\em J. Appl. Probab.}, 4:192--196, 1967.

\bibitem{DeshayesRoll2017}
A.~Deshayes and L.~T. Rolla.
\newblock Scaling limit of subcritical contact process.
\newblock {\em Stochastic Processes and their Applications}, 127(8):2630 --
  2649, 2017.

\bibitem{DJ14}
L.~Devroye and S.~Janson.
\newblock Protected nodes and fringe subtrees in some random trees.
\newblock {\em Electronic Communications in Probability}, 19, 2014.

\bibitem{DharmadhikariFabianEtAl1968}
S.~W. Dharmadhikari, V.~Fabian, and K.~Jogdeo.
\newblock Bounds on the moments of martingales.
\newblock {\em Annals of Mathematical Statistics}, 39:1719--1723, 1968.

\bibitem{DiLelievreEtAl2016}
G.~{Di Ges{\`u}}, T.~{Leli{\`e}vre}, D.~{Le Peutrec}, and B.~{Nectoux}.
\newblock {Jump Markov models and transition state theory: the quasi-stationary
  distribution approach}.
\newblock {\em Faraday Discussions}, 195:469--495, 2016.

\bibitem{Duflo}
M.~Duflo.
\newblock {\em Random iterative models}, volume~34 of {\em Applications of
  Mathematics (New York)}.
\newblock Springer-Verlag, Berlin, 1997.
\newblock Translated from the 1990 French original by Stephen S. Wilson and
  revised by the author.

\bibitem{EP23}
F.~Eggenberger and G.~P{\'o}lya.
\newblock {\"U}ber die statistik verketetter vorg{\"a}ge.
\newblock {\em Zeitschrift f{\"u}r Angewandte Mathematik und Mechanik},
  1:279--289, 1923.

\bibitem{Feller1940}
W.~Feller.
\newblock On the integro-differential equations of purely discontinuous
  {M}arkoff processes.
\newblock {\em Trans. Amer. Math. Soc.}, 48:488--515, 1940.

\bibitem{FerrariKestenEtAl1996}
P.~A. Ferrari, H.~Kesten, and S.~Mart{\'{\i}}nez.
\newblock {$R$}-positivity, quasi-stationary distributions and ratio limit
  theorems for a class of probabilistic automata.
\newblock {\em The Annals of Applied Probability}, 6(2):577--616, 1996.

\bibitem{FerrariMaric2007}
P.~A. Ferrari and N.~Mari{\'c}.
\newblock Quasi stationary distributions and {F}leming-{V}iot processes in
  countable spaces.
\newblock {\em Electron. J. Probab.}, 12:no. 24, 684--702 (electronic), 2007.

\bibitem{Gosselin2001}
F.~Gosselin.
\newblock Asymptotic behavior of absorbing {M}arkov chains conditional on
  nonabsorption for applications in conservation biology.
\newblock {\em The Annals of Applied Probability}, 11(1):261--284, 2001.

\bibitem{GrigorescuKang2004}
I.~Grigorescu and M.~Kang.
\newblock Hydrodynamic limit for a {F}leming-{V}iot type system.
\newblock {\em Stochastic Process. Appl.}, 110(1):111--143, 2004.

\bibitem{GroismanJonckheere2013}
P.~Groisman and M.~Jonckheere.
\newblock Simulation of quasi-stationary distributions on countable spaces.
\newblock {\em Markov Process. Related Fields}, 19(3):521--542, 2013.

\bibitem{HJS16}
C.~Holmgren, S.~Janson, and M.~{\v{S}}ileikis.
\newblock Multivariate normal limit laws for the numbers of fringe subtrees in
  $m$-ary search trees and preferential attachment trees.
\newblock {\em ArXiv:1603.08125}, 2016.

\bibitem{Janson04}
S.~Janson.
\newblock Functional limit theorems for multitype branching processes and
  generalized {P}\'olya urns.
\newblock {\em Stochastic Processes and Applications}, 110(2):177--245, 2004.

\bibitem{Janson03}
S.~Janson.
\newblock Asymptotic degree distribution in random recursive trees.
\newblock {\em Random Structures \& Algorithms}, 26(1-2):69--83, 2005.

\bibitem{Janson19}
S.~Janson.
\newblock Random replacements in {P}{\'o}lya urns with infinitely many colours.
\newblock {\em Electronic Communications in Probability}, 24, 2019.

\bibitem{LaruellePages2013a}
S.~Laruelle and G.~Pag\`es.
\newblock Nonlinear randomized urn models: a stochastic approximation
  viewpoint.
\newblock {\em Arxiv:1311.7367}, 2013.

\bibitem{LaruellePages2013}
S.~Laruelle and G.~Pag\`es.
\newblock Randomized urn models revisited using stochastic approximation.
\newblock {\em The Annals of Applied Probability}, 23(4):1409--1436, 2013.

\bibitem{MahmoudSmythe92}
H.~M. Mahmoud and R.~T. Smythe.
\newblock Asymptotic joint normality of outdegrees of nodes in random recursive
  trees.
\newblock {\em Random Structures \& Algorithms}, 3(3):255--266, 1992.

\bibitem{MW15}
H.~M. Mahmoud and M.~D. Ward.
\newblock Asymptotic properties of protected nodes in random recursive trees.
\newblock {\em Journal of Applied Probability}, 52(1):290--297, 2015.

\bibitem{MP16}
P.~Maillard and E.~Paquette.
\newblock Choices and intervals.
\newblock {\em Israel Journal of Mathematics}, 212(1):337--384, 2016.

\bibitem{MaillerMarckert2016}
C.~{Mailler} and J.-F. {Marckert}.
\newblock {Measure-valued P\'olya processes}.
\newblock {\em Electronic Journal of Probability}, 22, 2017.

\bibitem{Maric2015}
N.~Mari{\'{c}}.
\newblock Fleming--viot particle system driven by a random walk on
  $\mathbb{N}$.
\newblock {\em Journal of Statistical Physics}, 160(3):548--560, Aug 2015.

\bibitem{MartinezSanEtAl2013}
S.~{Mart\'inez}, J.~{San Mart\'in}, and D.~{Villemonais}.
\newblock {Existence and uniqueness of a quasi-stationary distribution for
  Markov processes with fast return from infinity}.
\newblock {\em Journal of Applied Probability}, 51(3), 2014.

\bibitem{MeleardVillemonais2012}
S.~M{\'e}l{\'e}ard and D.~Villemonais.
\newblock Quasi-stationary distributions and population processes.
\newblock {\em Probab. Surv.}, 9:340--410, 2012.

\bibitem{MeynTweedie1993}
S.~P. Meyn and R.~L. Tweedie.
\newblock Stability of {M}arkovian processes. {III}. {F}oster-{L}yapunov
  criteria for continuous-time processes.
\newblock {\em Advances in Applied Probability}, 25(3):518--548, 1993.

\bibitem{OcafrainVillemonais2017}
W.~O\c{c}afrain and D.~Villemonais.
\newblock Convergence of a non-failable mean-field particle system.
\newblock {\em Stoch. Anal. Appl.}, 35(4):587--603, 2017.

\bibitem{OliveiraDickman2006}
M.~M.~d. Oliveira and R.~Dickman.
\newblock {Quasi-stationary simulation: the subcritical contact process}.
\newblock {\em {Brazilian Journal of Physics}}, 36:685 -- 689, 09 2006.

\bibitem{Parthasarathy1967}
K.~R. Parthasarathy.
\newblock {\em Probability measures on metric spaces}.
\newblock Probability and Mathematical Statistics, No. 3. Academic Press, Inc.,
  New York-London, 1967.

\bibitem{Pemantle2007}
R.~Pemantle.
\newblock A survey of random processes with reinforcement.
\newblock {\em Probability Surveys}, 4:1--79, 2007.

\bibitem{Pouyanne2008}
N.~Pouyanne.
\newblock An algebraic approach to pólya processes.
\newblock {\em Ann. Inst. H. Poincaré Probab. Statist.}, 44(2):293--323, 04
  2008.

\bibitem{Renlund2010}
H.~{Renlund}.
\newblock {Generalized P\'olya urns via stochastic approximation}.
\newblock {\em Arxiv:1002.3716}, Feb. 2010.

\bibitem{StrookVaradhan}
D.~W. Stroock and S.~S. Varadhan.
\newblock {\em Multidimensional diffusion processes}.
\newblock Springer, 2007.

\bibitem{DoornPollett2013}
E.~A. van Doorn and P.~K. Pollett.
\newblock Quasi-stationary distributions for discrete-state models.
\newblock {\em European Journal of Operational Research}, 230(1):1--14, 2013.

\bibitem{Varadarajan1958}
V.~S. Varadarajan.
\newblock Weak convergence of measures on separable metric spaces.
\newblock {\em Sankhy\=a}, 19:15--22, 1958.

\bibitem{VerboomLankesterEtAl}
J.~Verboom, K.~Lankester, and J.~A.~J. Metz.
\newblock Linking local and regional dynamics in stochastic metapopulation
  models.
\newblock {\em Biological Journal of the Linnean Society}, 42(1‐2):39--55,
  1991.

\bibitem{Villemonais2015}
D.~Villemonais.
\newblock Minimal quasi-stationary distribution approximation for a birth and
  death process.
\newblock {\em Electronic Journal of Probability}, 20, 2015.

\bibitem{WangKolbEtAl2017}
A.~Q. {Wang}, M.~{Kolb}, D.~{Steinsaltz}, and G.~O. {Roberts}.
\newblock {Theoretical Properties of Quasistationary Monte Carlo Methods}.
\newblock {\em ArXiv e-prints}, July 2017.

\bibitem{WangRobertsEtAl2018}
A.~Q. {Wang}, G.~O. {Roberts}, and D.~{Steinsaltz}.
\newblock {An Approximation Scheme for Quasistationary Distributions of Killed
  Diffusions}.
\newblock {\em ArXiv e-prints}, Aug. 2018.

\bibitem{Zhang2016}
L.-X. Zhang.
\newblock Central limit theorems of a recursive stochastic algorithm with
  applications to adaptive designs.
\newblock {\em The Annals of Applied Probability}, 26(6):3630--3658, 2016.

\end{thebibliography}

\end{document}